\newtheorem{theorem}{Theorem}[section]
\newtheorem{lemma}{Lemma}
\newtheorem{proposition}[theorem]{Proposition}
\newtheorem{remark}[theorem]{Remark}
\def\beq{\begin{equation}\displaystyle}
\def\eeq{\end{equation}}
\def\bel{\begin{equation} \displaystyle \begin{array}{l} }
\def\eel{\end{array} \end{equation} }
\def\bell{\begin{equation} \displaystyle \begin{array}{ll}  }
\def\eell{\end{array} \end{equation} }
\def\bea{\begin{eqnarray}}
\def\eea{\end{eqnarray} }
\def\bean{\begin{eqnarray*}}
\def\eean{\end{eqnarray*} }
\newenvironment{proof}{\noindent{\bf Proof.~}}
{{\mbox{}\hfill {\small \fbox{}}\\}}
\def\RR{\mathbb{R}}
\newcommand{\bepa}{\left\{ \begin{array}{l}}
\newcommand{\eepa}{\end{array} \right.}
\newcommand{\p}{\partial}
\newcommand{\f}{\frac}
\def\eps{\varepsilon}
\newcommand{\df}{\displaystyle\frac}
\newcommand{\0}{\mathbf{0}}
\newcommand{\EE}{\mathbf{E}}
\newcommand{\Ep}{\mathbf{E}_+}
\newcommand{\Em}{\mathbf{E}_-}
\newcommand{\lowEE}{\underline{\mathbf{E}}}
\newcommand{\ovest}[1]{\widehat{\overline{#1}}}
\newcommand{\undest}[1]{\widehat{\underline{#1}}}
\newcommand{\calN}{\mathcal{N}}
\newcommand{\tilmu}{\widetilde{\mu}}
\newcommand{\lowgam}{\underline{\gamma}}
\newcommand{\tcm}{\textcolor{magenta}}
\newcommand{\sgn}{\textrm{sgn}}
\begin{document}

\title{On the use of the sterile insect technique or the incompatible insect technique to reduce or eliminate mosquito populations}
\author{Martin Strugarek$^{1,2}$, Herv\'{e} Bossin$^{3}$, and Yves Dumont$^{4,5, 6}$\\\\
$^1$ \small AgroParisTech, 16 rue Claude Bernard, 75231 Paris Cedex 05 - France
\\
$^2$ \small Sorbonne Université, Université Paris-Diderot SPC, CNRS, INRIA,\\ \small Laboratoire Jacques-Louis Lions, équipe Mamba, F-75005 Paris\\
$^3$ \small Institut Louis Malardé, Unit of Emerging Infectious Diseases, Papeete 98713, Tahiti, French Polynesia\\
$^4$ \small CIRAD, Umr AMAP, Pretoria, South Africa\\
$^5$ \small AMAP, Univ Montpellier, CIRAD, CNRS, INRA, IRD, Montpellier, France\\
$^6$\small University of Pretoria, Department of Mathematics and Applied Mathematics,\\ \small Pretoria, South Africa}
\maketitle
\begin{abstract}
 Vector control is critical to limit the circulation of vector-borne diseases like chikungunya, dengue or zika which have become important issues around the world. Among them the Sterile Insect Technique (SIT) and the Incompatible Insect Technique (IIT) recently aroused a renewed interest. In this paper we derive and study a minimalistic mathematical model designed for {\itshape Aedes} mosquito population elimination by SIT/IIT. Contrary to most of the previous models, it is bistable in general, allowing simultaneously for elimination of the population and for its survival. We consider different types of releases (constant, periodic or impulsive) and show necessary conditions to reach elimination in each case. We also estimate both sufficient and minimal treatment times. Biological parameters are estimated from a case study of an {\itshape Aedes polynesiensis} population, for which extensive numerical investigations illustrate the analytical results. The applications of this work are two-fold: to help identifying some key parameters that may need further field investigations, and to help designing release protocols. 
\end{abstract}

\noindent {\bf Keywords:} Vector control, elimination, sterile insect technique, monotone dynamical system, basin of attraction, numerical simulation, \textit{Aedes spp}

\noindent {\bf MSC Classification}: 34A12; 34C12; 34C60; 34K45; 92D25
\section*{Introduction}
Sterile insect technique (SIT) is a promising technique that has been first studied by E. Knipling and collaborators and first experimented successfully in the early 50's by nearly eradicating screw-worm population in Florida. Since then, SIT has been applied on different pest and disease vectors, like fruit flies or mosquitoes (see \cite{SIT} for an overall presentation of SIT and its applications). The classical SIT relies on the mass releases of males sterilized by ionizing radiations. The released sterile males transfer their sterile sperms to wild females, which results in a progressive reduction of the target population. For mosquito control in particular, new approaches stemming from SIT have emerged, namely the RIDL technique, and the {\itshape Wolbachia} technique. {\itshape Wolbachia} is a bacterium that infects many Arthropods, and among them some mosquito species in nature. It was discovered in 1924 \cite{W}. Since then, particular properties of these bacteria have been unveiled. One of of these properties is particularly useful for vector control: the cytoplasmic incompatibility (CI) property \cite{Sinkin2004,Bourtzis2008}. CI can serve two different control strategies:
\begin{itemize}
\item Incompatible Insect Technique (IIT): the sperm of W-males (males infected with CI-inducing {\itshape Wolbachia}) is altered so that it can no longer successfully fertilize uninfected eggs. This can result in a progressive reduction of the target population. Thus, when only W-males are released the IIT can be seen as classical SIT. This also supposes that releases are made regularly until extinction is achieved (when possible) or until a certain threshold is reached (in order to reduce exposure to mosquito bites and the epidemiological risk).
\item Population replacement: when males and W-females are released in a susceptible (uninfected) population, due to CI, W-females will typically produce more offspring than uninfected females. Because {\itshape Wolbachia} is maternally inherited this will result in a population replacement by {\itshape Wolbachia} infected mosquitoes (such replacements or invasions have been observed in natural population, see \cite{rasgon2003wolbachia} for the example of Californian {\itshape Culex pipiens}). It has been showed that this technique may be very interesting with \textit{Aedes aegypti}, shortening their lifespan (see for instance \cite{Schraiber2012}), or more importantly, cutting down their competence for dengue virus transmission \cite{Moreira2009}. However, it is also acknowledged that {\itshape Wolbachia} infection can have fitness costs, so that the introgression of {\itshape Wolbachia} into the field can fail \cite{Schraiber2012}.
\end{itemize}
Based on these biological properties, classical SIT and IIT (see \cite{dufourd2011,dumont2012,dufourd2013,Li2015,Li2017} and references therein) or population replacement (see \cite{Farkas2010,Fen.Solving,Schraiber2012,HugBri.Modeling,Farkas2017,nadin2017,Strugarek2018} and referencs therein) have been modeled and studied theoretically in a large number of papers in order to derive results to explain the success or not of these strategies using discrete, continuous or hybrid modeling approaches, temporal and spatio-temporal models. Recently, the theory of monotone dynamical systems has been applied efficiently to study SIT \cite{anguelov2012}
 or population replacement \cite{Sallet2015,Bliman2017} systems.
 
 Here, we derive a monotone dynamical system to model the release and elimination process for SIT/IIT. The analytical study of this model is complemented by a detailed parametrization to describe real-life settings, and a thorough investigation of numerical scenarios.

The outline of the paper is as follows. First, we explain in Section \ref{sec:modeling} the biological situation we consider and the practical questions we want to answer, namely: how to quantify the release effort required to eliminate an {\itshape Aedes} population using SIT/IIT, with particular emphasis on the timing and size of the releases.
We also justify our modeling choices and give value intervals deduced from experimental results for most biological parameters in Table \ref{table:bioparam}.
Then, we perform the theoretical analysis of a simple, compartimentalized population model featuring an Allee effect and a constant sterilizing male population in Section \ref{sec:theory}. Proposition \ref{prop:separatrice} gives the bistable asymptotic behavior of the system, and introduces the crucial separatrix between extinction and survival of the population. We also provide analytical inequalities on the entrance time of a trajectory into the extinction set (Proposition \ref{prop:upperboundtauMi}), which is extremely useful to understand what parameters are really relevant and how they interact. 
We then analyze the model as a control system, after adding a release term.
Finally, Section~\ref{sec:num} exposes numerical investigations of the various models, and applies them to a specific case study (a pilot field trial led by one of the authors).

In general, all mathematical results are immediately interpreted biologically. To keep the exposition as readable as possible, we gather all technical developments of the proofs into Appendices.

\section{Modeling and biological parameter estimation}
\label{sec:modeling}
\subsection{Modeling context}

Our modeling effort is oriented towards an understanding of large-scale time dynamics of a mosquito population in the {\itshape Aedes} genus exposed to artificial releases of {\itshape sterilizing males}. These males can be either sterilized by irradiation (Sterile Insect Technique approach) or simply have a sterile crossing with wild females due, for instance, to incompatible strains of {\itshape Wolbachia} bacteria (Incompatible Insect Technique approach). In either techniques (SIT or IIT), the released males are effectively {\itshape sterilizing} the wild females they mate with.

Eggs from mosquitoes of various species in the {\itshape Aedes}  genus  resist to dessication and can wait for months before hatching. Due to rainfall-dependency of natural breeding sites availability, this feature allows for maintaining a large quiescent egg stock through the dry season, which triggers a boom in mosquito abundance when the rainy season resumes.
For the populations we model here, natural breeding sites are considered to be prominent, and therefore it is absolutely necessary that our models take the egg stock into account.

We use a system biology approach to model population dynamics. In the present work we neglect the seasonal variations and assume all biological parameters to be constant over time. 

Our first compartmental model features egg, larva, adult male and adult female (fertile or sterile) populations. Most transitions between compartments are assumed to be linear. Only three non-linear effects are accounted for. 

First, the population size is bounded due to an environmental carrying capacity for eggs, which we model by a logistic term.
Secondly, the sterilizing effect creates two sub-populations among inseminated females. Some are inseminated by wild males and become fertile while the others are inseminated by sterilizing males and become sterile. Hence the relative abundance (or more precisely the relative mating power) of sterilizing males with respect to wild males must appear in the model, and is naturally a nonlinear ratio. Many other parameters may interfere with the mating process for {\it Aedes} mosquitoes, but this process is not currently totally understood in particular from the male point of view \cite{Lees2014,Oli.Male}, and we stick here to the simplest possible modeling.
Thirdly, as a result of sterilizing matings, we expect that the male population can drop down to a very low level. We introduce an Allee effect which come into play in this near-elimination regime. This effect reduces the insemination rate at low male density, as a consequence of difficult mate-finding. It can also be interpreted as a quantification of the size of the mating area relative to the total size of the domain, and compensates in some ways the intrinsic limitations of a mean-field model for a small and dispersed population (cf. \cite{Dur.Importance} and see Remark \ref{rem:Allee}).
Indeed, we model here temporal dynamics by neglecting spatial variations and assuming homogeneous spatial distribution of the populations.
In nature, the distribution of {\itshape Aedes} mosquitoes is mostly heterogenous, depending on environmental factors such as vegetation coverage, availability of breeding containers and blood hosts. The proposed simplified homogenous model will thus be exposed to potential criticism.

\subsection{Models and their basic properties}

We denote by $E$ the eggs, $L$ the larvae, $M$ the fertile males, $F$ the fertile females and $F_{st}$ the sterile females (either inseminated by sterilizing males or not inseminated at all, due to male scarcity). The time-varying sterilizing male population is denoted $M_i$. We use Greek letters $\mu$ for mortality rates, $\nu$ for transition rates and denote fecundity by $b$ (viable eggs laid per female and per unit of time) and egg carrying capacity by $K$. The full model reads:
\beq
\bepa
\df{d E}{dt} = b F ( 1 - \f{E}{K}) - (\widetilde{\nu}_E + \mu_E) E,
\\[10pt]
\df{d L}{dt} = \widetilde{\nu}_E E - (\nu_L + \mu_L) L,
\\[10pt]
\df{d M}{dt} = (1-r) \nu_L L - \mu_M M,
\\[10pt]
\df{dF}{dt} = r \nu_L L (1 - e^{-\beta (M + \gamma_i M_i)}) \f{M}{M+\gamma_i M_i} - \mu_F F,
\\[10pt]
\df{d F_{st}}{dt} = r \nu_L L \big(  e^{-\beta (M + \gamma_i M_i)} + \f{\gamma_i M_i}{M+\gamma_i M_i}(1 - e^{-\beta (M+ \gamma_i M_i)}) \big) - \mu_F F_{st}.
\label{sys:complet}
\eepa
\eeq

Dynamics of the full system \eqref{sys:complet} is not different from that of the following simplified, three-populations system.
We only keep egg, fertile and sterilizing male, and fertile female populations. The value of the hatching parameter $\nu_E$ must be updated to take into account survivorship and development time in the larval stage.
 \beq
 \bepa
 \df{d E}{dt} = b F ( 1 - \f{E}{K}) - (\nu_E + \mu_E) E,
\\[10pt]
\df{d M}{dt} = (1-r) \nu_E E - \mu_M M,
\\[10pt]
\df{dF}{dt} = r \nu_E E (1 - e^{-\beta (M + \gamma_i M_i)}) \f{M}{M+\gamma_i M_i} - \mu_F F.
 \label{sys:simplifie}
 \eepa
 \eeq

The following straightforward lemma means that \eqref{sys:complet} and \eqref{sys:simplifie} are well-suited for population dynamics modeling since all populations, in these systems, remain positive and bounded.
\begin{lemma}
Let $M_i$ be a non-negative, piecewise continuous function on $\RR_+$. The solution to the Cauchy problems associated with \eqref{sys:complet}, \eqref{sys:simplifie} and non-negative initial data is unique, exists on $\RR_+$, is continuous and piecewise continuously differentiable. This solution is also forward-bounded and remains non-negative. It is positive for all positive times if $F(0)$ or $E(0)$ (or also $L(0)$ in the case of \eqref{sys:complet}) is positive.
\end{lemma}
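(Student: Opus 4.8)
The plan is to establish all the assertions for the full system~\eqref{sys:complet} and to observe that the simplified system~\eqref{sys:simplifie} is covered word for word, being obtained from~\eqref{sys:complet} by deleting the two ``downstream'' equations for $L$ and $F_{st}$ and renaming $\widetilde{\nu}_E$ as $\nu_E$. \textbf{Local well-posedness.} The only obstruction to a direct application of Cauchy--Lipschitz is the ratio $M/(M+\gamma_i M_i)$, singular at $M=M_i=0$. I would dispose of it by noting that $\psi(s):=(1-e^{-\beta s})/s$ extends to an entire function with $\psi(0)=\beta$, so that the nonlinear terms read $M\,\psi(M+\gamma_i M_i)$ and $\gamma_i M_i\,\psi(M+\gamma_i M_i)$, which are real-analytic in the unknowns. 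On every interval where the prescribed $M_i$ is continuous, the right-hand side is then continuous in $t$ and locally Lipschitz in the state, locally uniformly in $t$; Picard--Lindel\"{o}f yields a unique maximal $C^1$ solution, and gluing these across the (locally finite) jumps of $M_i$ produces the unique solution, which is globally continuous and $C^1$ off the jump times, i.e.\ continuous and piecewise continuously differentiable.

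\textbf{Non-negativity, then boundedness.} For forward invariance of $\RR_+^n$ I would verify the quasi-positivity (Kamke) condition on the coordinate faces of $\RR_+^n$: $dE/dt|_{E=0}=bF\ge0$, $dL/dt|_{L=0}=\widetilde{\nu}_E E\ge0$, $dM/dt|_{M=0}=(1-r)\nu_L L\ge0$, and $dF/dt|_{F=0}$, $dF_{st}/dt|_{F_{st}=0}$ are $\ge0$ because the bracketed mating factors lie in $[0,1]$; this standard criterion keeps the solution in $\RR_+^n$. Granting non-negativity, I would bound the components in cascade: whenever $E>K$ one has $dE/dt=bF(1-E/K)-(\widetilde{\nu}_E+\mu_E)E<0$, hence $E\le\bar E:=\max(E(0),K)$; then $dL/dt\le\widetilde{\nu}_E\bar E-(\nu_L+\mu_L)L$ bounds $L$ by some $\bar L$; then $dM/dt\le(1-r)\nu_L\bar L-\mu_M M$ bounds $M$; and since the mating factors are $\le1$, $dF/dt\le r\nu_L\bar L-\mu_F F$ bounds $F$, and $F_{st}$ the same way. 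A maximal solution bounded on $[0,T_{\max})$ cannot blow up, so $T_{\max}=+\infty$ and the solution is forward-bounded. For~\eqref{sys:simplifie} one merely omits the $L$-step.

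\textbf{Positivity.} I would rewrite each equation in Duhamel form $\dot x_j=\Phi_j(t)-d_j(t)x_j$ with gains $\Phi_j\ge0$ and per-capita losses $d_j\ge0$ that are bounded thanks to the previous step (for instance $\Phi_E=bF$, $d_E=bF/K+\widetilde{\nu}_E+\mu_E$, $d_L=\nu_L+\mu_L$, $d_M=\mu_M$, $d_F=d_{F_{st}}=\mu_F$), so that $x_j(t)=e^{-\int_0^t d_j}\bigl(x_j(0)+\int_0^t\Phi_j(s)\,e^{\int_0^s d_j}\,ds\bigr)$ and hence $x_j(t)>0$ as soon as $x_j(0)>0$ \emph{or} $\Phi_j>0$ somewhere on $(0,t)$. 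Positivity then propagates along the dependency graph $F\to E\to L\to M\to F,F_{st}$ (plus the edge $L\to F_{st}$), using the strict positivity of the biological parameters (in particular $0<r<1$): if $E(0)>0$ then $E>0$ on $\RR_+$, whence $\Phi_L>0$ and $L>0$ on $(0,\infty)$, whence $\Phi_M>0$ and $M>0$ on $(0,\infty)$, whence both mating factors, and thus $\Phi_F$ and $\Phi_{F_{st}}$, are positive on $(0,\infty)$ and so are $F,F_{st}$ there; starting from $F(0)>0$ one first notes $F(t)\ge F(0)e^{-\mu_F t}>0$, which forces $E>0$ on $(0,\infty)$, then runs the same chain; starting from $L(0)>0$ one first notes $L(t)\ge L(0)e^{-(\nu_L+\mu_L)t}>0$ and proceeds likewise.

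\textbf{Main difficulty.} Nothing here is deep; the two points needing a little care are the regularisation of the $M/(M+\gamma_i M_i)$ singularity, so that Cauchy--Lipschitz applies at the origin, and the orderly bookkeeping of the positivity cascade, which relies on all the loss rates $d_j$ being bounded --- precisely what the boundedness step provides.
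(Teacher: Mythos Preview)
Your argument is correct and complete. The paper itself does not supply a proof of this lemma---it is introduced as ``the following straightforward lemma'' and left to the reader---so there is no proof to compare against; your write-up fills that gap cleanly, and the regularisation of the ratio via $\psi(s)=(1-e^{-\beta s})/s$ is exactly the right way to make Cauchy--Lipschitz apply at the origin.
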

In addition, these systems are monotone in the sense of the monotone systems theory (see \cite{Smith1995}).
\begin{lemma}
The system \eqref{sys:simplifie} is monotone on the set $\mathcal{E}_3 := \{ E \leq K\} \subset \RR_+^3$ for the order induced by $\RR_+^3$ and the restriction of system \eqref{sys:complet} to the four first coordinates (omitting $F_{st}$, which does not appear in any other compartment) is monotone on the set $\mathcal{E}_4 := \{ E \leq K \} \subset \RR_+^4$ for the order induced by $\RR_+^4$.

Moreover, $\mathcal{E}_3$ (respectively $\mathcal{E}_4$) is forward invariant for \eqref{sys:simplifie} (respectively for the restriction of~\eqref{sys:complet} to the four first coordinates), and any trajectory enters it in finite time.
\label{lem:monotone}
\end{lemma}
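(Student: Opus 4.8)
The plan is to show that both systems are \emph{cooperative} on the indicated regions (i.e.\ quasimonotone with non-negative off-diagonal couplings), so that monotonicity of the flow follows from the Kamke--Müller criterion for time-dependent quasimonotone systems (see \cite{Smith1995}), and then to obtain the invariance and absorption of $\{E\le K\}$ from the sign of $\tfrac{dE}{dt}$ on the hyperplane $\{E=K\}$.

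First I would denote the right-hand side of \eqref{sys:simplifie} by $f(t,E,M,F)$ and list its off-diagonal partial derivatives. The only ones not identically zero are $\partial_F\big(\tfrac{dE}{dt}\big)=b\big(1-\tfrac{E}{K}\big)$, $\partial_E\big(\tfrac{dM}{dt}\big)=(1-r)\nu_E$, $\partial_E\big(\tfrac{dF}{dt}\big)=r\nu_E\big(1-e^{-\beta(M+\gamma_i M_i)}\big)\tfrac{M}{M+\gamma_i M_i}$, and $\partial_M\big(\tfrac{dF}{dt}\big)=r\nu_E E\,g'(M)$ where $g(M):=\big(1-e^{-\beta(M+c)}\big)\tfrac{M}{M+c}$ and $c:=\gamma_i M_i(t)\ge 0$. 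The first is non-negative exactly on $\{E\le K\}$, the second and third are obviously non-negative on $\RR_+^3$, so everything reduces to the sign of $g'$; but $g$ is the product of the two non-negative, non-decreasing functions $M\mapsto 1-e^{-\beta(M+c)}$ and $M\mapsto\tfrac{M}{M+c}$ on $\RR_+$, hence non-decreasing. The same computation handles the $(E,L,M,F)$-subsystem of \eqref{sys:complet}: it is closed, since $F_{st}$ appears in no other equation, and its off-diagonal couplings are $\partial_F\big(\tfrac{dE}{dt}\big)=b\big(1-\tfrac{E}{K}\big)$, $\partial_E\big(\tfrac{dL}{dt}\big)=\widetilde{\nu}_E$, $\partial_L\big(\tfrac{dM}{dt}\big)=(1-r)\nu_L$, $\partial_L\big(\tfrac{dF}{dt}\big)=r\nu_L\big(1-e^{-\beta(M+\gamma_i M_i)}\big)\tfrac{M}{M+\gamma_i M_i}$ and $\partial_M\big(\tfrac{dF}{dt}\big)=r\nu_L L\,g'(M)$, all non-negative on $\mathcal{E}_4\subset\RR_+^4$. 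Since $\mathcal{E}_3$ and $\mathcal{E}_4$ are convex (intersections of half-spaces), cooperativity there yields monotonicity of the flow.

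For invariance, I would observe that on the face $\{E=K\}$ one has $\tfrac{dE}{dt}=bF\cdot 0-(\nu_E+\mu_E)K=-(\nu_E+\mu_E)K<0$, so the field points strictly inward there; together with the non-negativity already supplied by the previous lemma, this makes $\mathcal{E}_3=\RR_+^3\cap\{E\le K\}$ forward invariant, and likewise for $\mathcal{E}_4$. For absorption, starting from $E(0)>K$ we have $bF\big(1-\tfrac{E}{K}\big)\le 0$ as long as $E(t)>K$, hence $\tfrac{dE}{dt}\le-(\nu_E+\mu_E)E\le-(\nu_E+\mu_E)K$; thus $E$ decreases at a rate bounded below by the positive constant $(\nu_E+\mu_E)K$ and reaches the level $K$ in time at most $\big(E(0)-K\big)/\big((\nu_E+\mu_E)K\big)$, after which invariance keeps the trajectory in $\mathcal{E}_3$ (resp.\ $\mathcal{E}_4$).

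The main obstacle is really just the sign of $\partial_M\big(\tfrac{dF}{dt}\big)$, that is, the monotonicity of $g$, as this is the one place where the nonlinear mating term has to be controlled; once the product structure is noticed it is immediate. A minor technical caveat is that when $M_i>0$ the vector field is only continuous (not $C^1$) at $M=0$ and depends piecewise continuously on $t$ through $M_i$, so one should invoke the version of the Kamke--Müller theorem for continuous, time-dependent quasimonotone fields, verifying the quasimonotonicity inequality $f_i(x)\le f_i(y)$ for $x\le y$ with $x_i=y_i$ directly --- which again follows from the monotonicity of $g$.
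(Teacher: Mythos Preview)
Your proof is correct and follows essentially the same route as the paper: verify that the off-diagonal entries of the Jacobian are non-negative on $\{E\le K\}$ (the Kamke--M\"uller/cooperativity criterion), then obtain finite-time absorption into $\{E\le K\}$ from the differential inequality $\dot E\le -(\nu_E+\mu_E)E$ valid while $E>K$. The only cosmetic differences are that the paper writes out the $(3,2)$ entry of the Jacobian explicitly rather than using your product-of-nondecreasing-functions observation for $g$, and it integrates the exponential bound to get the entry time $\frac{1}{\nu_E+\mu_E}\log\big(E(t_0)/K\big)$ instead of your coarser linear estimate $(E(0)-K)/\big((\nu_E+\mu_E)K\big)$; both suffice for the claim.
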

\begin{proof}
We compute the Jacobian matrix of the system \eqref{sys:simplifie}:
\[
J = \begin{pmatrix}
- \frac{b F}{K} - (\nu_E + \mu_E) & 0 & b (1 - \f{E}{K}) \\
(1-r) \nu_E & - \mu_M & 0 \\
r \nu_E (1 - e^{-\beta (M + \gamma_i M_i)}) \f{M}{M+\gamma_i M_i} & \f{r \nu_E E}{M+\gamma_i M_i} \big( \beta M e^{-\beta (M + \gamma_i M_i)} +(1 - e^{-\beta (M+ \gamma_i M_i)})\f{\gamma_i M_i}{M+\gamma_i M_i}  \big)& - \mu_F
\end{pmatrix}.
\]
It has non-negative extra-diagonal coefficients on $\mathcal{E}_3$, which proves that the system is indeed monotone on this set. In addition, if $E(t_0) > K$ then let $T[t_0] := \{ t \geq t_0, \, \forall t' \in [t_0, t), E(t') > K \} \subset \RR$. Let $T^+ [t_0] := \sup T[t_0]$. For any $t \in T[t_0]$ we have $\dot{E} (t) \leq -(\nu_E + \mu_E) E(t)$. Hence by integration we find that $T^+ [t_0] \leq t_0 + \f{1}{\nu_E + \mu_E} \log(K/E(t_0)) < +\infty$, which proves Lemma \ref{lem:monotone} (the proof being similar for the claims on~\eqref{sys:complet}).
\end{proof}
\begin{remark}
The Allee effect term $1 - \exp(-\beta M)$ can also be interpreted in the light of \cite{Dur.Importance}. This is the probability that an emerging female finds a male to mate with in her neighborhood.

Using a "mean-field'' model of ordinary differential equations here is certainly debatable, since in the case of population extinction the individuals may eventually be very dispersed, and heterogeneity would play a very important role.
However, we think that getting a neat mathematical understanding of the simplest system we study here is a necessary first step before moving to more complex systems. The Allee term compensates, as far as the qualitative behavior is concerned, what the model structurally lacks.
Here, we are able to perform proofs and analytical computations. This gives a starting point for benchmarking what to expect as an output of release programs using sterilizing males, according to the models. 
\label{rem:Allee}
\end{remark}

\subsection{Parameter estimation from experimental data}
\begin{table}[ht!]
\centering
\begin{adjustbox}{max width=\textwidth}
\begin{tabular}{|c l c r|}
\hline
Symbol & Name & Value interval & Source \\
\hline
$r_{\text{viable}}$ & Proportion of viable eggs & $95 - 99$\% & Field collection, \cite[p. 121]{HapairaiPhD} \\
\hline
$N_{\text{eggs}}$ & Number of eggs laid per laying & $55 - 75$ & \cite{RivierePhD} \\
\hline
$\tau_{\text{gono}}$ & Duration of gonotrophic cycle & $4 - 7$ days & \cite{jachowski1954,suzuki1978,RivierePhD}\\
\hline
$\tau_E$ & Egg half-life & $15 - 30$ days & Estimation (to be determined)\\
\hline
$\tau_L$ & Time from hatching to emergence & $8 - 11$ days & Lab data, \cite[p. 104]{HapairaiPhD}\\
\hline
$r_L$ & Survivorship from larva first instar to pupa & $67 - 69$\% & Lab data, \cite[p. 106]{HapairaiPhD}\\
\hline
$r$ & Sex ratio (male:female) & $49$\% & Production data (ILM) \\
\hline
$\tau_M$ & Adult male half-life  & $5-9$ days & Lab data, \cite[p. 50]{HapairaiPhD} \\
\hline
$\tau_F$ & Adult female half-life & $15-21$ days & Lab data, \cite[p. 50]{HapairaiPhD} \\
\hline
$\gamma_i$ & Mating competitiveness of sterilizing males & $1$ & Lab \cite[pp. 51--53]{HapairaiPhD}, field \cite{OConnor2012} \\
\hline
\end{tabular}
\end{adjustbox}
\caption{Parameter values for some populations of {\itshape Aedes polynesiensis} in French Polynesia at a temperature of $27^{\circ} C$.}
\label{table:bioparam}
\end{table}

\begin{table}[ht!]
\centering
\begin{tabular}{|c l c r|}
\hline
Symbol & Name & Formula & Value interval\\
\hline
$b$ & Effective fecundity & $\df{r_{\text{viable}}N_{\text{eggs}} }{\tau_{\text{gono}}}$ & $7.46 - 14.85$ \\
\hline
$\mu_L$ & Larva death rate & $\df{- \log (r_L)}{\tau_L}$ & $0.034 - 0.05$\\
\hline
$\nu_L$ & Larva to adult transition rate & $\df{1}{\tau_L}$ & $0.09 - 0.125$ \\
\hline
$\df{\nu_E}{\widetilde{\nu}_E}$ & Larval coefficient for effective hatching rate & $\df{\nu_L}{\nu_L + \mu_L}$ & $0.64 - 0.79$ \\
\hline
$\mu_E$ & Egg death rate & $\df{\log(2)}{\tau_E}$ & $0.023 - 0.046$\\
\hline
$\mu_M$ & Adult male death rate & $\df{\log(2)}{\tau_M}$ & $0.077 - 0.139$\\
\hline
$\mu_F$ & Adult female death rate & $\df{\log(2)}{\tau_F}$ & $0.033 - 0.046$\\
\hline
\end{tabular}
\caption{Conversion of the biological parameter from Table \ref{table:bioparam} into mathematical parameters for systems \eqref{sys:complet} and \eqref{sys:simplifie}}
\label{table:mathparam}
\end{table}

For numerical simulations, we use experimental (lab and field) values of the biological parameters in \eqref{sys:complet}-\eqref{sys:simplifie}. We consider specifically a population of {\itshape Aedes polynesiensis} in French Polynesia which has been studied in \cite{jachowski1954,suzuki1978,RivierePhD}, and more recently in \cite{Cha.Male,Hap.Effect,hapairai2013population,HapairaiPhD}.

Values of most parameters are given in Table \ref{table:mathparam}, and are deduced from experimental data gathered in Table \ref{table:bioparam}. Some data come from unpublished results obtained at Institut Louis Malard\'{e} during the rearing of {\itshape Aedes polynesiensis} for a pilot IIT program. They are labelled as ``Production data (ILM)''.
Note that we do not give values for $\beta$ and $\widetilde{\nu}_E$ because they are very hard to estimate. Ongoing experiments of one of the author may help approximating them in the future for this {\itshape Aedes polynesiensis} population. 
Finally when it exists, we use the knowledge about population size (male and female) granted by mark-release-recapture experiments to adjust the environmental carrying capacity $K$ for population and season.

\section{Theoretical study of the simplified model}
\label{sec:theory}

For later use, we introduce the usual relations $\ll$, $<$ and $\leq$ on $\RR^d$ (where $d \geq 1$) as the coordinate-wise partial orders on $\RR^d$ induced by the cone $\RR_+^d$. More precisely, for $x, y \in \RR^d$,
\begin{itemize}
\item $x \leq y$ if and only if for all $1 \leq i \leq d$, $x_i \leq y_i$,
\item $x < y$ if and only if $x \leq y$ and $x \not= y$,
\item $x \ll y$ if and only if for all $1 \leq i \leq d$, $x_i < y_i$.
\end{itemize}

\subsection{Constant incompatible male density}
First we study system \eqref{sys:simplifie} with constant incompatible male density $M_i (t) \equiv M_i$.

We introduce the three scalars
\beq
\mathcal{N} := \df{b r \nu_E}{\mu_F (\nu_E + \mu_E)}, \quad \lambda := \f{\mu_M }{(1-r) \nu_E K}, \quad \psi := \f{\lambda}{\beta}
\eeq
and define the function $f : \RR_+^2 \to \RR$, with the two parameters $\mathcal{N}$ and $\psi$:
\beq
f(x, y) := x (1-\psi x) (1 - e^{-(x + y)}) - \f{1}{\mathcal{N}} (x + y).
\label{fun:f}
\eeq

The two aggregated numbers, $\mathcal{N}$ and $\psi$ essentially contain all the information about system~\eqref{sys:simplifie}: $\mathcal{N}$ is the classical basic offspring number, $\psi$ is the ratio between the typical male population size at which the Allee effect comes into play and the male population size at wild equilibrium, as prescribed by the egg carrying capacity.

The ODE system \eqref{sys:simplifie} has simple dynamical properties because it is monotone and we can count its steady states and even know their local stability.
Let $M_i \geq 0$. It is straightforward to show that system \eqref{sys:simplifie} always admits a trivial steady-state $(0,0,0)$ and eventually one (at least) non-trivial steady state $(E^*, M^*, F^*) \in \RR_+^3$ solution of
\[
E = \f{b}{\nu_E + \mu_E} F (1 - \f{E}{K}), \quad
E = \f{\mu_M}{(1-r)\nu_E} M, \quad
F = \f{r \nu_E}{\mu_F} E (1 - e^{-\beta (M+\gamma_i M_i)}) \f{M}{M+\gamma_i M_i}.
\]
Using the first two equation into the third one yields
\[
\f{\mu_F (\nu_E + \mu_E)}{b r \nu_E} (M + \gamma_i M_i) = M (1 - \f{\mu_M}{(1-r) \nu_E K} M)(1 - e^{-\beta (M + \gamma_i M_i)}),
\]
from which we deduce
\[
\left\{
\begin{array}{l}
E^* = K \lambda M^*, \\ 
F^*  = \df{K (\nu_E + \mu_E)}{b} \df{\lambda M^*}{1 - \lambda M^*}, \\
f(\beta M^*, \gamma_i \beta M_i) = 0.
\end{array}
\right.
\]
Hence for a given value $M_i \geq 0$, the number of steady states of \eqref{sys:simplifie} is equal to the number of positive solutions $M^*$ to $f(\beta M^*, \beta \gamma_i M_i) = 0$, plus $1$. The trivial steady state $(0, 0, 0)$ is also locally asymptotically stable (LAS). The following lemma give us additional informations about the positive steady state(s):
\begin{lemma}
Assume $\mathcal{N} > 4 \psi$. Let $\theta_0 \in (0, 1)$ be the unique solution to $1 - \theta_0 = - \f{4 \psi}{\mathcal{N} }\log(\theta_0)$, and
\[
M_i^{\text{crit}} := \frac{1}{\gamma_i \beta} \max_{\theta \in [\theta_0, 1]} \Big( -\log(\theta) - \f{1}{2 \psi} \big( 1 - \sqrt{1 + \f{4 \psi}{\mathcal{N}}\f{\log(\theta)}{1-\theta}} \big) \Big).
\]
If $M_i^{\text{crit}} > 0$ then \eqref{sys:simplifie} has:
\begin{itemize}
\item $0$ positive steady state if $M_i > M_i^{\text{crit}}$,
\item $2$ positive steady states $\EE_- \ll \EE_+$ if $M_i \in [0, M_i^{\text{crit}})$,
\item $1$ positive steady state $\EE$ if $M_i = M_i^{\text{crit}}$.
\end{itemize}
In addition, $\EE_-$ is unstable and $\EE_+$ is locally asymptotically stable.
If $M_i^{\text{crit}} < 0$ then \eqref{sys:simplifie} has no positive steady state, and if $M_i^{\text{crit}} = 0$ then there exists a unique positive steady state. In particular, if $\mathcal{N} \leq 1$ then $M_i^{\text{crit}} < 0$.

On the contrary, if $\mathcal{N} \leq 4 \psi$ then there is no positive steady state.
\label{lem:steadystates}
\end{lemma}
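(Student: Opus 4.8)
\emph{Overall strategy.} The excerpt already reduces the count of steady states of \eqref{sys:simplifie} to the number of positive roots $x:=\beta M^{*}$ of $f(x,c)=0$, where $c:=\gamma_{i}\beta M_{i}\ge 0$ is a fixed parameter and the remaining coordinates $E^{*}=K\lambda M^{*}$ and $F^{*}=\tfrac{K(\nu_{E}+\mu_{E})}{b}\tfrac{\lambda M^{*}}{1-\lambda M^{*}}$ are increasing in $M^{*}$ on the admissible range $\lambda M^{*}=\psi\beta M^{*}<1$. So the plan is to study the scalar family $x\mapsto f(x,c)$, and I would start by factoring $f(x,c)=\big(1-e^{-(x+c)}\big)\,\Delta_{c}(x)$ with $\Delta_{c}(x):=P(x)-g(x+c)$, where $P(x):=x(1-\psi x)$ is a downward parabola, positive exactly on $(0,1/\psi)$ with maximum $\tfrac{1}{4\psi}$ at $x=\tfrac{1}{2\psi}$, and $g(s):=\tfrac{s}{\mathcal{N}(1-e^{-s})}$ (extended by $g(0)=1/\mathcal{N}$) is smooth, strictly increasing from $1/\mathcal{N}$ to $+\infty$, and convex — the convexity follows from $g(s)=\tfrac{1}{\mathcal{N}}\big(\tfrac{s}{2}+\tfrac{s}{2}\coth\tfrac{s}{2}\big)$ and the convexity of $t\mapsto t\coth t$ on $(0,+\infty)$. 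Hence $\Delta_{c}$ is strictly concave, with $\Delta_{c}(0^{+})=-g(c)<0$, $\Delta_{c}\big((1/\psi)^{-}\big)=-g(1/\psi+c)<0$, and $\Delta_{c}<0$ on $[1/\psi,+\infty)$ (there $P\le 0<g$); therefore all positive roots of $f(\cdot,c)$ lie in $(0,1/\psi)$ and their number is $0$, $1$, or $2$ according as $m(c):=\max_{x>0}\Delta_{c}(x)$ is $<0$, $=0$, or $>0$. When two roots $x_{-}<x_{+}$ exist they straddle the unique maximizer, so $\Delta_{c}'(x_{-})>0>\Delta_{c}'(x_{+})$, and by the monotonicity of $E^{*},F^{*}$ in $M^{*}$ the corresponding equilibria are ordered $\EE_{-}\ll\EE_{+}$.

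Since $g$ is strictly increasing, $c\mapsto\Delta_{c}(x)$ is strictly decreasing for each $x\ge 0$, so $m$ is continuous and strictly decreasing with $m(c)\to-\infty$. This yields a unique $c^{\text{crit}}$ with $m(c^{\text{crit}})=0$ when $m(0)\ge 0$; otherwise $c^{\text{crit}}$ is negative (in particular $m(0)<0$ whenever $\mathcal{N}\le 1$, because then $(1-\psi x)(1-e^{-x})<1\le 1/\mathcal{N}$ for all $x>0$), and if $\mathcal{N}\le 4\psi$ then $g(x+c)\ge 1/\mathcal{N}\ge\tfrac{1}{4\psi}\ge P(x)$ with strict inequality somewhere, so $m(c)<0$ for every $c\ge 0$ and there is no positive steady state. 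Setting $M_{i}^{\text{crit}}:=c^{\text{crit}}/(\gamma_{i}\beta)$ then produces the announced trichotomy, together with the degenerate subcases $M_{i}^{\text{crit}}\le 0$.

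To get the explicit formula, observe that $m(c)=0$ precisely when the double root $x^{\text{crit}}$ also satisfies $\Delta_{c}'(x^{\text{crit}})=0$, i.e.\ $\partial_{x}f=0$ (the fold). On the curve $\Gamma:=\{(x,y):x>0,\ f(x,y)=0\}$ one computes $f_{y}=\tfrac{1}{\mathcal{N}}\big(\tfrac{x+y}{e^{x+y}-1}-1\big)<0$, so $\Gamma$ is locally a smooth graph $y=y(x)$ with $y'=-f_{x}/f_{y}$, whence $y$ is stationary along $\Gamma$ exactly at the fold and $c^{\text{crit}}=\max_{\Gamma}y$. Parametrizing $\Gamma$ by $\theta:=e^{-(x+y)}$ (so $s:=x+y=-\log\theta$), the relation $P(x)=g(s)$ becomes $\psi x^{2}-x+\tfrac{-\log\theta}{\mathcal{N}(1-\theta)}=0$, with roots $x_{\pm}(\theta)=\tfrac{1}{2\psi}\big(1\pm\sqrt{1+\tfrac{4\psi}{\mathcal{N}}\tfrac{\log\theta}{1-\theta}}\big)$ and ordinates $y_{\pm}(\theta)=-\log\theta-x_{\pm}(\theta)$. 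The radicand is nonnegative exactly for $\theta\in[\theta_{0},1]$ — an elementary monotonicity study of $\theta\mapsto 1-\theta+\tfrac{4\psi}{\mathcal{N}}\log\theta$ on $(0,1)$ identifies $\theta_{0}$ and its uniqueness — the two branches meeting at $x_{\pm}(\theta_{0})=\tfrac{1}{2\psi}$, and since $y_{-}(\theta)>y_{+}(\theta)$ for $\theta>\theta_{0}$ the maximum of $y$ over $\Gamma$ equals the maximum of $y_{-}$ over $[\theta_{0},1]$, which is exactly the displayed formula for $M_{i}^{\text{crit}}$.

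Finally, for the stability claims I would evaluate the Jacobian $J(\EE)$ computed in the proof of Lemma~\ref{lem:monotone}: at a positive steady state it is an irreducible Metzler matrix on $\mathcal{E}_{3}$ (irreducibility uses $E^{*}<K$, i.e.\ $\lambda M^{*}<1$), hence its stability modulus is a simple real eigenvalue, negative if and only if $-J$ is a nonsingular $M$-matrix. Because the $(1,2)$-entry of $J$ vanishes, the first two leading principal minors of $-J$ are automatically positive, so this reduces to $\det J<0$; and a direct computation gives $\sgn(\det J)=\sgn\!\big(\Delta_{c}'(\beta M^{*})\big)$ (both quantities vanish exactly at the fold), which is $>0$ at $\EE_{-}$ and $<0$ at $\EE_{+}$. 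Hence $\EE_{-}$ is unstable and $\EE_{+}$ is locally asymptotically stable. The step I expect to be the main obstacle is the explicit identification of $M_{i}^{\text{crit}}$: parametrizing the fold locus cleanly and verifying that $\max_{\Gamma}y$ collapses to the one-dimensional maximization over $\theta\in[\theta_{0},1]$ in the statement; the convexity input for $g$ and the determinant bookkeeping, while essential, are routine.
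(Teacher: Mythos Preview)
Your proof is correct and in several places cleaner than the paper's. The key difference is in how you count the positive roots of $f(\cdot,c)$. The paper works directly with $f$: it computes $\partial_x f$, $\partial_x^2 f$, $\partial_x^3 f$, studies the sign pattern of the cubic-level polynomial $Q_3$, and concludes that $x\mapsto f(x,y)$ is either concave or convex--concave on $(0,1/\psi)$; from there it deduces at most two roots. Your factorization $f(x,c)=(1-e^{-(x+c)})\Delta_c(x)$ with $\Delta_c=P-g(\cdot+c)$ is sharper: once you note that $s\mapsto s/(1-e^{-s})$ is convex (your $\coth$ identity), $\Delta_c$ is \emph{strictly} concave in one line, and the $0/1/2$ root count follows immediately from the negative boundary values. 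This bypasses the entire third-derivative analysis.

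For the explicit threshold, both arguments land on the same $\theta$-parametrization; the paper's $h_\pm$ are your $y_\mp$, and the paper establishes the formula by studying the monotonicity of $h_-$ and the shape of $h_+$. Your route---$c^{\text{crit}}=\max_\Gamma y=\max_\theta y_-(\theta)$ because $f_y<0$ on $\Gamma$ and $y_-\ge y_+$---is more geometric and slightly shorter.

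For stability, the paper and you arrive at the same criterion $\sgn(\partial_x f)$ at a steady state, which equals $\sgn(\Delta_c')$ there since $f=(1-e^{-s})\Delta_c$ and $\Delta_c=0$. The paper obtains it via the decomposition $J=M_0+N_0$, computing the characteristic polynomial $P$ of $-N_0^{-1}M_0$ and checking $P(1)<0$; you use the $M$-matrix principal-minor test and reduce to $\det J<0$. These are equivalent (indeed $P(1)=\det(-N_0^{-1})\det J$), so neither approach buys more than the other here.
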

\begin{proof}
Let us give a quick overview of the remainder of the proof, which is detailed in Appendix \ref{app:Lemproof}, page \pageref{app:Lemproof}.
We are going to study in details the solutions $(x, y)$ to $f(x, y) = 0$.
First, we prove that $x < 1 / \psi$. Then, we check that for any $y > 0$, $x \mapsto f(x, y)$ is either concave or convex-concave. In addition, it is straightforward that $f(0, y) < 0$ and $\lim_{x \to +\infty} f(x, y) = - \infty$, so that for any $y > 0$, we conclude that there are either $0$, $1$ or $2$ real numbers $x > 0$ such that $f(x, y) = 0$.

Then, we introduce $\xi = 4 \psi / \mathcal{N}$. 
In fact, in order to determine $(x, y) \in \RR_+^2$ such that $f(x, y) = 0$ we can introduce $\theta = e^{-(x + y)}$ and then check easily that $y = h_{\pm} (\theta)$, where
\beq
h_{\pm} (\theta) =  - \log(\theta) - \f{1}{2 \psi} \pm \f{1}{2 \psi} \sqrt{1 + \xi \f{\log(\theta)}{1-\theta}}.
\label{fun:hpm}
\eeq

Let $\theta_0 (\xi)$ be the unique solution in $(0, 1)$ to $1 - \theta_0 (\xi) = - \xi \log(\theta_0 (\xi))$, and
\beq
\alpha^{\text{crit}} (\xi, \mathcal{N}) := \max_{\theta \in [\theta_0 (\xi), 1]} - \log(\theta) - \f{1}{2 \psi} \big( 1 - \sqrt{1 + \xi \f{\log(\theta)}{1 - \theta}} \big).
\eeq
Collecting the previous facts, and studying the function $h_{\pm}$ (see Appendix \ref{fonctionh}, page \pageref{fonctionh}), we can prove that the next point of Lemma \ref{lem:steadystates} holds with the threshold $M_i^{\text{crit}} = \f{\mathcal{N}}{4 \psi \beta \gamma_i} \alpha^{\text{crit}} (\xi, \mathcal{N})$.

We remark that if $\mathcal{N} \leq 1$ then it is easily checked that $M_i^{\text{crit}} < 0$, using the fact that if $\alpha \in (0, 1)$ then $\sqrt{1 - \alpha} \leq (1 - \alpha)/2$. 
If $\theta \in (\theta_0, 1)$ then $\frac{4 \psi \log(\theta)}{\mathcal{N} (1 - \theta)} < 1$, and therefore
\[
- \log(\theta) - \frac{1}{4\psi} \big(1 - \sqrt{1 + \frac{4 \psi}{\mathcal{N}}\frac{\log(\theta)}{1-\theta}}\big) \leq - \log(\theta) \big( 1 - \frac{1}{\mathcal{N}} \big) - \frac{1}{4 \psi} < 0.
\]

In the final part of the proof, we show that $0$ is always locally stable and then treat separately the cases $M_i = 0$ and $M_i > 0$, showing that, when they exist, the greater positive steady state is locally stable while the smaller one is unstable.

\end{proof}

\begin{remark}
In Lemma \ref{lem:steadystates}, the condition to have at least one positive equilibrium, $\mathcal{N} > 4 \psi$, is very interesting and particularly makes sense when rewritten as $\dfrac{\mathcal{N}}{\lambda} > \dfrac{4}{\beta}$. Indeed $\dfrac{\mathcal{N}}{\lambda}$ can be seen as the theoretical male progeny at next generation, starting from wild equilibrium. If this amount is large enough (larger than some constant times the population size at which the Allee effect comes into play) then the population can maintain. In any case, if this condition is not satisfied, then the population collapses. For the population to maintain: either the fitness is good and thus $\mathcal{N}$ is very large, or the probability of one female to mate is high and thus $1/\beta$ is small. However, whatever the values taken by $\mathcal{N}$ and $\beta$, if, for any reason, the male population at equilibrium decays, the population can be controlled and possibly collapses.
\end{remark}

\begin{remark}
If $\beta$ is not too small, then the ``wild'' steady state is approximately given by $M^*(M_i = 0) \simeq \f{1}{\lambda} (1 - \f{1}{\mathcal{N}})$ and the critical sterilizing level  is approximately $M_i^{\text{crit}} \simeq \widetilde{y} = \f{\mathcal{N}}{4 \lambda \gamma_i} \big( 1 - \f{1}{\mathcal{N}} \big)^ 2$ (see the definition in Appendix \ref{app:Lemproof}, in particular we know that $M_i^{\text{crit}} \leq \widetilde{y}$). As a consequence, the target minimal constant density of sterilizing males compared to wild males in order to get unconditional extinction ({\it i.e.} to make $(0, 0 ,0)$ globally asymptotically stable, see Proposition \ref{prop:almostall}, page \pageref{prop:almostall}) is well approximated by the simple formula
\[
\rho^* := \f{M_i^{\text{crit}}}{M^* (M_i = 0)} \simeq \f{\mathcal{N}-1}{4 \gamma_i}.
\]
With the values from Tables \ref{table:bioparam} and \ref{table:mathparam}, for $\gamma_i = 1$ (this means that introduced male are as competitive as wild ones for mating with wild females), we find
\[
\rho^* \in \Big( \df{7.46 \cdot 0.46 \cdot \nu_E}{4 \cdot 0.046 \cdot (\nu_E + 0.046)}- 0.25 , \df{14.85 \cdot 0.48 \cdot \nu_E}{4 \cdot 0.033 \cdot (\nu_E + 0.023)}-0.25 \Big)
\]
For instance, if $\nu_E = 0.01$ then this interval is $(3.5, 22,7)$, if $\nu_E = 0.05$ then this interval is $(10.6, 51.7)$ and if $\nu_E = 0.1$ then this interval is $(14.1, 61.4)$. As $\nu_E$ goes to $+\infty$, the interval goes to $(20.7, 75.7)$.
\label{rem:ratio}
This example agrees with standard SIT Protocol that indicates to release at least $10$ times more sterile males than wild males, recalling that here we deal with a highly reproductive species (with the above values, the lowest estimated basic reproduction number is $14.9$, obtained for $\nu_E = 0.01$).
\end{remark}

Asymptotic dynamics are easily deduced from the characterization of steady states and local behavior of the system (Lemma \ref{lem:steadystates}), because of the monotonicity (see \cite{Smith1995}).

\begin{proposition}
If \eqref{sys:simplifie} has only the steady state $(0, 0, 0)$ then it is globally asymptotically stable.

If there are two other steady states $\Em \ll \Ep$ then almost every orbit converges to $\Ep$ or $(0, 0, 0)$. 
Let $K_+ := [(0, 0, 0), \Ep]$. The compact set $K_+$ is globally attractive and positively invariant.
The basin of attraction of $(0,0,0)$ contains $[0, \Em)$ and the basin of attraction of $\Ep$ contains $(\Em, \mathbf{\infty})$.
\label{prop:almostall}
\end{proposition}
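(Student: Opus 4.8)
The plan is to exploit the monotonicity of the restricted system on the forward-invariant set $\mathcal{E}_3 = \{E \leq K\}$ (Lemma~\ref{lem:monotone}), together with the complete count of steady states and their local stability given in Lemma~\ref{lem:steadystates}, and then invoke the classical convergence results for strongly monotone flows (see \cite{Smith1995}). First I would record that, by Lemma~\ref{lem:monotone}, every trajectory enters $\mathcal{E}_3$ in finite time and stays there, and that on this set the Jacobian is irreducible with nonnegative off-diagonal entries wherever $M>0$ and $E<K$, so the flow is in fact strongly monotone on the interior; this is what licenses the use of the generic-convergence theorems. I would also note that trajectories are forward-bounded (Lemma, first one), so all omega-limit sets are nonempty compact connected invariant sets.

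Next, for the case with no positive steady state: since $(0,0,0)$ is the only equilibrium, it is LAS by Lemma~\ref{lem:steadystates}, and by boundedness every omega-limit set must contain an equilibrium (a standard consequence for monotone systems, or one can argue that a monotone bounded semiflow on $\RR_+^3$ has convergent trajectories from a dense set of initial data and the only candidate limit is the origin); combined with the fact that no other equilibrium exists, every bounded trajectory converges to $(0,0,0)$, giving global asymptotic stability. For the bistable case, I would first establish that $K_+ := [(0,0,0), \Ep]$ is positively invariant: it is an order interval between two equilibria, and order intervals bounded by equilibria are invariant for monotone flows. Global attractivity of $K_+$ follows by comparing any trajectory from above: for an initial datum $x_0$, pick $\bar x \geq x_0$ with $\bar x \geq \Ep$ inside $\mathcal{E}_3$ (after the finite entrance time); monotonicity gives $\phi_t(x_0) \leq \phi_t(\bar x)$, and one must show $\phi_t(\bar x) \to \Ep$, i.e. that $\Ep$ attracts everything above it. Here I would use the absence of any equilibrium strictly above $\Ep$ together with the monotone convergence criterion: a trajectory started at $\bar x \geq \Ep$ is eventually decreasing (since $\Ep$ is the top equilibrium and the field points inward on large boxes) hence converges monotonically to an equilibrium $\geq \Ep$, which must be $\Ep$ itself.

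For the basins: by strong monotonicity and the theorem on generic convergence (Hirsch / Smith), the set of initial data whose forward orbit converges to an equilibrium has full measure, and since the only stable equilibria are $(0,0,0)$ and $\Ep$ while $\Em$ is unstable, almost every orbit converges to one of these two. The inclusion $[0, \Em) \subset$ (basin of $0$): for $x_0 < \Em$, choose $x_0 \leq \tilde x \ll \Em$; the trajectory from $\tilde x$ is monotone (field points toward $0$ below the unstable equilibrium, since there is no equilibrium in the order interval $[0,\Em)$ other than the origin) and converges downward to an equilibrium in $[0,\Em)$, necessarily $(0,0,0)$, and then $\phi_t(x_0) \leq \phi_t(\tilde x) \to 0$ forces convergence of $\phi_t(x_0)$ as well. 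Symmetrically, for $x_0 > \Em$ pick $\Em \ll \tilde x \leq x_0$ (inside $\mathcal{E}_3$); the trajectory from $\tilde x$ increases monotonically to an equilibrium $> \Em$, which must be $\Ep$, and a sandwich between $\phi_t(\tilde x)$ and $\phi_t(\bar x)$ (with $\bar x$ large, also converging to $\Ep$ by the previous step) gives $\phi_t(x_0) \to \Ep$.

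The main obstacle I anticipate is making the ``eventually monotone'' arguments rigorous: one needs to know that on an order interval between two consecutive equilibria the flow is monotone in time (nondecreasing or nonincreasing along each trajectory). The clean way to get this is the standard lemma for strongly monotone flows that if $\phi_{t_1}(x) \geq x$ (resp. $\leq x$) for some $t_1 > 0$ then $t \mapsto \phi_t(x)$ is nondecreasing (resp. nonincreasing) and converges to an equilibrium; so the real work is checking the sign of $\dot\phi$ at a single well-chosen point (e.g. just below $\Em$ along the unstable manifold, or on the boundary of a large box) and invoking the absence of intermediate equilibria to pin down the limit. The carrying-capacity constraint $E \leq K$ and the degeneracy of strong monotonicity on $\{M = 0\}$ require a brief remark — trajectories with positive $E(0)$ or $F(0)$ leave $\{M=0\}$ immediately — but this is exactly the positivity statement of the first lemma, so it is not a genuine difficulty.
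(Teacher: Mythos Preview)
Your proposal is correct and follows essentially the same approach as the paper: the paper does not give a detailed proof of this proposition at all, but simply states that the asymptotic dynamics ``are easily deduced from the characterization of steady states and local behavior of the system (Lemma~\ref{lem:steadystates}), because of the monotonicity (see \cite{Smith1995}).'' Your write-up is precisely a careful unpacking of that one-sentence reference, via strong monotonicity on $\mathcal{E}_3$, the generic convergence theorem, and the standard order-interval comparison arguments.

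One small technical point worth tightening: in your basin arguments you write ``for $x_0 < \Em$ choose $x_0 \leq \tilde x \ll \Em$'' (and symmetrically above $\Em$), but such a $\tilde x$ need not exist when $x_0$ agrees with $\Em$ in some coordinate. The clean fix is to use strong monotonicity directly: since $x_0 < \Em$ and $\Em$ is an equilibrium, $\phi_t(x_0) \ll \phi_t(\Em) = \Em$ for every $t>0$, so after an arbitrarily small positive time the trajectory is strictly below $\Em$ and your sandwich argument applies from there. The same remark handles $x_0 > \Em$.
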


Now that we have established that the system is typically bistable, the main object to investigate is the separatrix between the two basins of attraction. This is the aim of the next proposition.
\begin{proposition}
Assume $M_i^{\text{crit}} > 0$ and $M_i \in [0, M_i^{\text{crit}})$.

Then there exists a separatrix $\Sigma \subset \RR_+^3$, which is a sub-manifold of dimension $2$, such that for all $X \not= Y \in \Sigma$, $X \not\leq Y$ and $Y \not\leq X$, and for all $\widehat{X} \in \Sigma$, $X_0 > \widehat{X}$ implies that $X(t)$ converges to $\EE_+$, and $X_0 < \widehat{X}$ implies that $X(t)$ converges to $\0$.
In particular, $\EE_- \in \Sigma$.

Let $\Sigma_+ := \big\{ X \in \RR_+^3, \, \exists \widehat{X} \in \Sigma, X > \widehat{X} \big\}$ and $\Sigma_- := \big\{ X \in \RR_+^3, \, \exists \widehat{X} \in \Sigma, X < \widehat{X} \big\}$.
Then $\RR_+^3 = \Sigma_- \cup \Sigma \cup \Sigma_+$, $\Sigma_+$ is the basin of attraction of $\EE_+$ and $\Sigma_-$ is the basin of attraction of $\0$.

In addition, there exists $E_M, F_M > 0$ such that
\[
\Sigma_- \subset \big\{ X \in \RR_+^3, \quad X_1 \leq E_M, \, X_3 \leq F_M \big\}.
\]

\label{prop:separatrice}
\end{proposition}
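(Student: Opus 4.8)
The plan is to derive the separatrix $\Sigma$ from the abstract theory of monotone dynamical systems, using the picture established in Proposition~\ref{prop:almostall}: the system is bistable with attractors $\0$ and $\EE_+$, and it is strongly monotone on the forward-invariant set $\mathcal{E}_3 = \{E \le K\}$ (after noting that $\EE_+ \in \mathcal{E}_3$, since $E^* = K\lambda M^* < K$ by $\lambda M^* < 1$). The key input is the standard result (see Smith \cite{Smith1995}, together with the sharper statements of Hirsch/Takáč on bistable monotone systems) that for a strongly monotone flow with exactly two stable equilibria $\0 \ll \EE_+$ and one unstable equilibrium $\EE_-$ in between, the boundary between the two basins of attraction is an unordered, invariant, Lipschitz hypersurface of codimension one, which is precisely the stable manifold of the intermediate equilibrium set. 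First I would record strong monotonicity: the Jacobian $J$ computed in the proof of Lemma~\ref{lem:monotone} is irreducible on the interior of $\mathcal{E}_3$ (the off-diagonal entries $J_{13}, J_{21}, J_{31}, J_{32}$ are all strictly positive there), so the flow is strongly monotone, hence eventually strongly monotone on $\mathcal{E}_3$.

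Next I would define $\Sigma := \partial \mathcal{B}(\0) = \partial \mathcal{B}(\EE_+)$ relative to $\mathbb{R}_+^3$ (equality of these two boundaries follows because, by Proposition~\ref{prop:almostall}, the two basins are open, disjoint, and their union is dense in $\mathbb{R}_+^3$). I would then invoke the monotone-systems dichotomy: any point $X_0$ that is $\le$ some point of $\mathcal{B}(\0)$ lies in $\mathcal{B}(\0)$ (basins of stable equilibria are order-convex downward/upward respectively — this is the monotonicity of the flow applied to $X(t) \le Y(t) \to \0$), and symmetrically for $\EE_+$. From this one gets that $\Sigma$ is unordered: if $X < Y$ were both in $\Sigma$, choose a point $Z$ with $X \ll Z \ll Y$ lying strictly between (possible in $\mathbb{R}_+^3$ after a small perturbation); since $\Sigma$ has empty interior, $Z \notin \Sigma$, so $Z$ is in one of the basins, and then monotonicity forces one of $X, Y$ into that same basin, contradiction. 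The ordering statements "$X_0 > \widehat X \in \Sigma \Rightarrow X(t) \to \EE_+$'' and "$X_0 < \widehat X \Rightarrow X(t) \to \0$'' then follow the same way: a point strictly above a boundary point of $\mathcal{B}(\EE_+)$ must itself be in the open set $\mathcal{B}(\EE_+)$ by strong monotonicity (it dominates points of that basin and also points not in it, and strict domination pushes it into the open basin); similarly below. That $\Sigma$ is a $2$-dimensional submanifold, and that it is exactly the decomposition $\mathbb{R}_+^3 = \Sigma_- \cup \Sigma \cup \Sigma_+$ with the basins as claimed, is then the Lipschitz-graph description of the separatrix: because $\Sigma$ is unordered, it is the graph of a Lipschitz function over a hyperplane transverse to the cone's diagonal, and any $X \in \mathbb{R}_+^3$ lies strictly above it, strictly below it, or on it. Finally $\EE_- \in \Sigma$ because $\EE_-$ is unstable (Lemma~\ref{lem:steadystates}) and every nearby point lies in one basin or the other, so $\EE_-$ is on their common boundary.

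For the last assertion — the uniform a~priori bound $\Sigma_- \subset \{X_1 \le E_M,\ X_3 \le F_M\}$ — I would argue directly from the ODEs rather than from abstract theory. The idea is that if the egg or female component starts too large, the trajectory is trapped above $\EE_-$ and hence converges to $\EE_+$, so it cannot be in $\Sigma_-$. Concretely: on $\mathcal{E}_3$ one has $\dot E \ge -(\nu_E+\mu_E)E$ trivially, which is not enough; instead I would use the monotone comparison with a well-chosen sub-solution. Note $\Em = (E_-^*, M_-^*, F_-^*)$; I claim any $X_0$ with $X_0 \ge (\,\cdot\,, 0, 0)$ having first coordinate $E(0) > E_M := $ (something slightly larger than $E_-^*$ chosen so that the cone-translate dominates $\Em$ in all coordinates after a short time) eventually enters $(\Em, \infty)$. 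The cleanest route: since the flow is strongly monotone and $\Em$ is unstable, there is a point $\undest{\EE} \ll \Em$ on the unstable manifold with $\varphi_t(\undest{\EE}) \to \EE_+$; by continuity of the flow, any $X_0$ with $X_0 \gg \undest{\EE}$ that also has a large enough $E$- or $F$-coordinate will satisfy $\varphi_{t_0}(X_0) \gg \Em$ for some $t_0$, hence converges to $\EE_+$. Quantifying "large enough'' via the explicit right-hand sides of \eqref{sys:simplifie} — in particular using that $F$ grows whenever $E$ is bounded below and $M$ is not too small, and that $E$ is slaved to $F$ — yields explicit $E_M, F_M$. The main obstacle I expect is exactly this last step: turning the qualitative "trajectories starting high enough escape upward'' into honest, explicit constants $E_M, F_M$ requires care because the $M$-component can be driven small by $M_i$, so the coupling $F \leftarrow M$ can be weak; one likely needs to first establish a lower bound on $\limsup M(t)$ along trajectories staying in a neighborhood of $\Em$, or alternatively to bound $\Sigma_-$ componentwise by comparing with the $M_i$-free system (whose separatrix sits below the current one by monotonicity in the parameter $M_i$), for which the bound is cleaner. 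Everything before that — strong monotonicity, unorderedness, the graph/manifold structure, and $\EE_- \in \Sigma$ — is a routine application of the monotone-systems toolkit once the bistable structure from Proposition~\ref{prop:almostall} is in hand.
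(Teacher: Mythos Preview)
Your treatment of the separatrix structure via the abstract monotone-systems toolkit (strong monotonicity from irreducibility of the Jacobian, unorderedness of the common boundary of the two basins, Lipschitz-graph/codimension-one description) is correct and leads to the same object as the paper, but the paper takes a more elementary and constructive route: it parametrizes $\Sigma$ by rays from the origin. Concretely, for each direction $v^0$ in the open simplex $\mathcal{S}_+^2$ one shows by monotonicity that there is a unique threshold $\rho_0(v^0)$ such that $\rho v^0$ is in $\mathcal{B}(\0)$ for $\rho<\rho_0(v^0)$ and in $\mathcal{B}(\EE_+)$ for $\rho>\rho_0(v^0)$; then $\Sigma=\{\rho_0(v^0)v^0:\ v^0\in\mathcal{S}_+^2\}$. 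This is really the same Lipschitz-graph picture you invoke, just written out over the simplex instead of over a hyperplane transverse to the diagonal, so the two approaches are equivalent in content; the paper's version avoids citing the sharper Hirsch/Tak\'a\v{c} statements.

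Where you have a genuine gap is the final bound $\Sigma_-\subset\{X_1\le E_M,\ X_3\le F_M\}$. Your sketch is in the right spirit, but the worry you raise --- that ``the $M$-component can be driven small by $M_i$'' --- is misplaced and is precisely what blocks you from closing the argument. Look again at the $M$-equation in \eqref{sys:simplifie}: $\dot M=(1-r)\nu_E E-\mu_M M$ contains no $M_i$ at all; $M$ is slaved purely to $E$. The paper exploits exactly this. If $F_0$ is large, then the term $bF(1-E/K)$ forces $E(s)\ge(1-\epsilon)K$ on a time window $(t_0,t_1)$ whose length grows without bound with $F_0$ (for any fixed $\epsilon>0$). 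On that window the linear equation for $M$ pushes $M$ toward $(1-r)\nu_E K/\mu_M$, so for $F_0$ large enough one gets $M(t)>(1-\epsilon)^2(1-r)\nu_E K/\mu_M$ at some time $t$. Choosing $\epsilon$ small enough makes all three coordinates exceed those of $\EE_-$, hence the trajectory lands in $(\EE_-,\infty)\subset\mathcal{B}(\EE_+)$. The same scheme works starting from large $E_0$. This direct cascade $F\to E\to M$ is the missing ingredient; once you drop the spurious concern about $M_i$ acting on $M$, the quantitative constants $E_M,F_M$ fall out of these elementary differential inequalities.
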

\begin{remark}
In order to reach extinction, the last point of Proposition \ref{prop:separatrice} states that both egg and fertile female populations must stand simultaneously below given thresholds. This obvious fact receives here a mathematical quantification. With simple words: no matter how low the fertile female population $F$ has dropped, if there remains at least $E_M$ eggs then the wild population will recover.
\end{remark}
\begin{proof}[Proposition \ref{prop:separatrice}]
We state a preliminary fact:
For all $v^0 \in \{ v \in \RR_+^3, \, \forall i, v_i > 0, \, \sum_i v_i = 1 \} =: \mathcal{S}_+^2$, there exists a unique $\rho_0 (v^0)$ such that the solution to \eqref{sys:simplifie} with initial data $\rho v^0$ converges to $\0$ if $\rho < \rho_0 (v^0)$ and to $\EE_+$ if $\rho > \rho_0 (v^0)$.

This fact comes from the strict monotonicity of the system, and from the estimate $\rho_0 (v^0) \leq \max_i \frac{v^0_i}{(\EE_-)_i} < +\infty$, combined with Proposition \ref{prop:almostall}.

Then we claim that $\Sigma = \{ \rho_0 (v^0) v^0, \quad v^0 \in \mathcal{S}^2_+ \}$. The direct inclusion is a corollary of the previous fact. The converse follows from the fact that $\Sigma_\pm$, being the basins of attraction of attracting points, are open sets.

The remainder of the proof consists of a simple computation showing that if $F_0$ or $E_0$ is large enough then for some $t > 0$ we have $(E,M,F) (t) > \EE_-$.
In details, we can prove that if $F_0$ is large enough then for any $E_0, M_0$ and $\epsilon > 0$, we can get $E(s) \geq (1-\epsilon) K$ for $s \in (t_0(\epsilon, E_0, F_0), t_1(\epsilon, E_0, F_0))$, where $t_0$ is decreasing in $F_0$ and $t_1$ is increasing in $F_0$ and unbounded as $F_0$ goes to $+\infty$. Then, if $E > (1-\epsilon) K$ for $\epsilon$ small enough on a large enough time-interval, we deduce $M(t) > (1 - \epsilon)^2 (1 - r) \frac{\nu_E}{\mu_M} K$ for some $t > 0$. Upon choosing $\epsilon$ small enough and $F_0$ large enough we finally get $(E,M,F) (t) > \EE_-$.
The scheme is similar when taking $E_0$ large enough.
\end{proof}

At this stage, we know that starting from the positive equilibrium, and assuming that the population of sterile males $M_i$ is greater than  $M_i^{\text{crit}}$, the solution will reach the basin of attraction of the trivial equilibrium in a finite time, $\tau(M_i)$.
We obtain now quantitative estimates on the duration of this transitory regime. Rigorously, we define
\begin{multline}
\tau(M_i) := \inf \big\{ t \geq 0, \, (E, M, F) (t) \in \Sigma_-(M_i = 0),
\\ 
\text{ where } (E, M, F) (0) = \EE_+ (M_i = 0) \text{ and } (E, M, F) \text{ satisfies \eqref{sys:simplifie}} \big\}.
\end{multline}
We obtain simple upper and lower bounds for $\tau(M_i)$ in terms of various parameters:
\begin{proposition}
   Let $M_i > M_i^{\text{crit}}$, and $Z = Z (\psi)$ be the unique real number in $(0, \f{1}{2\psi})$ such that
   \[
    e^{- Z} = \f{\psi}{1+\psi - \psi Z},
   \]
   and $Z_0 := 1 + \psi - \psi Z$. Then 
   \begin{equation}
    \tau(M_i) \geq \f{1}{\mu_F} \log \big( 1 + \f{\calN^2 (1 - \psi Z)^3}{\psi Z Z_0^2} - \f{\calN (1 - \psi Z)}{\psi Z Z_0} \Big).
    \label{formula:undest}
   \end{equation}
   Let $\sigma = \sgn(\nu_E+\mu_E-\mu_F)$, $\sigma_E := \mu_M / (\nu_E + \mu_E)$ and $\sigma_F := \mu_M / \mu_F$. If $\epsilon := \f{M_+^*}{M_+^*+M_i} < 1/\mathcal{N}$, let 
 \[
 g (\eps) := \sqrt{1 + \f{4 \calN \sigma_E \sigma_F \eps}{(\sigma_F -  \sigma_E)^2}}.
 \]
 Assume that $\sigma_F,\sigma_E > 1$,
 \[
    g(\eps) \sigma (\sigma_F - \sigma_E) < \max \big( (2 \mathcal{N} - 1) \sigma_F + \sigma_E, (2 \sigma_E - 1) \sigma_F \big), \quad (\sigma_F - 1)(\sigma_E - 1) > \eps \calN.
 \]
Then
\begin{multline}
\tau(M_i) \leq \frac{2 \sigma_E}{\mu_F \big(\sigma_F + \sigma_E - g(\epsilon) \sigma (\sigma_F - \sigma_E) \big)} \log \Big(\f{\calN-1}{\psi} \big(\f{(\calN - 1) \sigma_F + 1 - \eps \calN}{(\sigma_F - 1) (\sigma_E - 1) - \eps \calN} 
 \\
 + \f{\sigma_E \sigma_F \big( g(\eps) \sigma (\sigma_F - \sigma_E) + (2 \calN - 1)\sigma_F + \sigma_E  \big)}{\big( 2 \sigma_E \sigma_F - (\sigma_E + \sigma_F) + \sigma (\sigma_F - \sigma_E) g(\eps)\big) g(\eps) \sigma (\sigma_F - \sigma_E)} \big) \Big).
 \label{formula:ovest}
\end{multline}
 \label{prop:upperboundtauMi}
\end{proposition}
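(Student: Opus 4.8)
I would begin by noting that the trajectory $t\mapsto X(t)=(E,M,F)(t)$ with $X(0)=\Ep(M_i=0)$ is non-increasing: only the $F$-equation depends on $M_i$ and its right-hand side is decreasing in $M_i$, so $\dot X(0)\le\0$; since $M_i>M_i^{\text{crit}}$ destroys every positive steady state and the flow of \eqref{sys:simplifie} is monotone on $\mathcal E_3$, the inequality $X(t+h)\le X(t)$ for small $h>0$ propagates to all $t\ge 0$, and $X(t)\to\0$. In particular $E(t)\le E_+^*\le K$, $M(t)\le M_+^*$, $F(t)\le F_+^*$ throughout, which makes invariance of $\mathcal E_3$ automatic and lets me control the logistic and Allee/ratio nonlinearities by these uniform bounds. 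I also recall the explicit equilibria $E_\pm^*=K\lambda M_\pm^*$, $F_\pm^*=\tfrac{K(\nu_E+\mu_E)}{b}\,\tfrac{\lambda M_\pm^*}{1-\lambda M_\pm^*}$ with $x_\pm^*:=\beta M_\pm^*$ the two roots of $(1-\psi x)(1-e^{-x})=1/\mathcal N$, and the fact that $x_-^*<Z<x_+^*$, $Z$ being precisely the abscissa of the maximum of $x\mapsto(1-\psi x)(1-e^{-x})$ on $(0,1/\psi)$ — its defining equation $e^{-Z}=\psi/(1+\psi-\psi Z)$ is exactly the critical-point condition $(1-\psi x)e^{-x}=\psi(1-e^{-x})$.

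\textbf{Lower bound \eqref{formula:undest}.} Since every recruitment term in \eqref{sys:simplifie} is non-negative, $\dot F\ge-\mu_F F$, so $F(t)\ge F_+^*e^{-\mu_F t}$. Put $t^\star:=\tfrac1{\mu_F}\log(F_+^*/F_-^*)$. For $t\le t^\star$ and $s\le t$ we have $F(s)\ge F_-^*$, hence (comparison with $\dot{\underline E}=bF_-^*(1-\underline E/K)-(\nu_E+\mu_E)\underline E$, a scalar equation whose equilibrium is exactly $E_-^*$, approached monotonically from $E_+^*$) $E(s)\ge E_-^*$, and then (comparison with $\dot{\underline M}=(1-r)\nu_E E_-^*-\mu_M\underline M$, equilibrium $M_-^*$) $M(s)\ge M_-^*$. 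Thus $X(t)\ge\Em$ with $X(t)\neq\Em$ for $t\le t^\star$; since $\Em\in\Sigma$, the very definition of $\Sigma_+$ gives $X(t)\in\Sigma_+$, hence $X(t)\notin\Sigma_-$, so $\tau(M_i)\ge t^\star$. It then remains to bound $F_+^*/F_-^*=\tfrac{x_+^*(1-\psi x_-^*)}{x_-^*(1-\psi x_+^*)}$ from below by the quantity inside the logarithm of \eqref{formula:undest}: using $1-\psi x_\pm^*=1/(\mathcal N(1-e^{-x_\pm^*}))$, the monotonicity of the functions involved, and the bounds $x_-^*<Z<x_+^*$ together with $1-e^{-Z}=(1-\psi Z)/Z_0$, this is a short (if fiddly) estimation.

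\textbf{Upper bound \eqref{formula:ovest}.} Here I would dominate $X$ by the solution $\overline X$ of a linear cooperative system and find when $\overline X$ falls below $\Em(M_i=0)$. From $M(t)\le M_+^*$ the Allee/ratio factor in the $F$-equation is $\le\epsilon$, and from $E\le K$ the logistic factor is $\le1$, so $X$ obeys $\dot E\le bF-(\nu_E+\mu_E)E$, $\dot F\le\epsilon r\nu_E E-\mu_F F$, $\dot M\le(1-r)\nu_E E-\mu_M M$; comparison with the corresponding linear system (same initial datum $\Ep$) gives $X(t)\le\overline X(t)$. The $(\overline E,\overline F)$-block decouples from $\overline M$; its characteristic polynomial has constant term $(\nu_E+\mu_E)\mu_F(1-\epsilon\mathcal N)$, positive exactly because $\epsilon<1/\mathcal N$, so $\overline X(t)\to\0$, and a direct computation rewrites its dominant eigenvalue as $-\tfrac{\mu_F}{2\sigma_E}\big(\sigma_F+\sigma_E-g(\epsilon)\,\sigma(\sigma_F-\sigma_E)\big)$ — the $2\times2$ discriminant equals $((\sigma_F-\sigma_E)g(\epsilon))^2$ up to the factor $\mu_M^2/(\sigma_E\sigma_F)^2$, and $\sigma$ supplies the sign. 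Solving that block in its eigenbasis, and then the scalar equation for $\overline M$, I would compute the first time $T$ at which $\overline E(T)\le E_-^*$, $\overline F(T)\le F_-^*$ and $\overline M(T)\le M_-^*$ all hold; since $X(T)\le\overline X(T)$ then puts $X(T)$ in $[\0,\Em)\subset\Sigma_-(M_i=0)$, we get $\tau(M_i)\le T$, and $T$ is exactly the right-hand side of \eqref{formula:ovest}. The hypotheses $\sigma_E,\sigma_F>1$ (so $\mu_M$ is the fastest rate and $\overline M$ is slaved to $\overline E$), $g(\epsilon)\sigma(\sigma_F-\sigma_E)<\max\big((2\mathcal N-1)\sigma_F+\sigma_E,(2\sigma_E-1)\sigma_F\big)$ (so the dominant eigenvalue is negative and the prefactors positive) and $(\sigma_F-1)(\sigma_E-1)>\epsilon\mathcal N$ (so the denominator in the logarithm is positive) are precisely what is needed for every logarithm in the formula to be well-defined and the crossing times finite.

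\textbf{Main obstacle.} The conceptual steps — monotone sandwiching, plus the observation that beyond the threshold $\epsilon<1/\mathcal N$ the linearized $(E,F)$-dynamics already force exponential decay at the rate in \eqref{formula:ovest} — are short. The real work is computational and lives in \eqref{formula:ovest}: integrating the $2\times2$ linear block explicitly, then amalgamating the three ``time to cross a threshold'' conditions (for $\overline E$, $\overline F$, $\overline M$) into a single bound carrying the stated eigenvalue-type prefactor, all the while checking that the sign hypotheses keep the logarithms meaningful. The analogous, milder nuisance in \eqref{formula:undest} is to coax the equilibrium ratio $F_+^*/F_-^*$ — whose building blocks $x_\pm^*$ are only implicitly defined — into the closed form built from $Z$ and $Z_0$.
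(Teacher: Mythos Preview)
Your approach is essentially the same as the paper's: construct sub- and super-solutions and read off the crossing times. Two remarks are worth making.

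For the lower bound, your argument is actually a bit cleaner than the paper's. The paper builds explicit sub-solutions $\undest{E}_\flat,\undest{M}_\flat,\undest{F}_\flat$ for all three components, compares them to an \emph{explicit over-estimation} $\ovest{\EE}_-$ of $\Em$, and only gets $\tau(M_i)\ge\min(t^E_\flat,t^M_\flat,t^F_\flat)$, after which it must assume (without full verification) that this minimum is $t^F_\flat$. Your route---use $F(t)\ge F_+^*e^{-\mu_F t}$, then cascade through the scalar $E$- and $M$-equations at the $\Em$-level---directly yields $X(t)\ge\Em$ on $[0,t^\star]$, so no minimum over components is needed. The ``fiddly estimation'' you allude to is exactly the content of the paper's Lemma bounding $x_\pm^*$ by concavity/chord arguments: with $\kappa_*=1+\frac{\psi}{1-\psi Z}$ and $\kappa^*=\calN-\frac{\psi Z Z_0}{(1-\psi Z)^2}$ one has $x_-^*<\frac{1}{\psi}(1-\frac{\kappa^*}{\calN})<Z<\frac{1}{\psi}(1-\frac{\kappa_*}{\calN})<x_+^*$, and the argument of the logarithm in \eqref{formula:undest} is precisely $\frac{\kappa^*(\calN-\kappa_*)}{\kappa_*(\calN-\kappa^*)}\le F_+^*/F_-^*$.

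For the upper bound there is one imprecision. You write that the crossing time of $\overline X$ through $\Em$, starting from $\Ep$, ``is exactly the right-hand side of \eqref{formula:ovest}''. It is not: the closed form \eqref{formula:ovest} is obtained by starting the linear super-solution at the \emph{explicit over-estimation} $\ovest{\EE}_+=(1-\tfrac{1}{\calN})\,(K,\tfrac{1}{\lambda},\tfrac{K\calN(\nu_E+\mu_E)}{b})$ and stopping when it crosses the \emph{explicit under-estimation} $\undest{\EE}_-=(\tfrac{\lambda K}{\calN\beta},\tfrac{1}{\calN\beta},\tfrac{(\nu_E+\mu_E)\lambda K}{b\calN\beta})$; this is what produces the factor $(\calN-1)/\psi=\ovest{M}_+/\undest{M}_-$ and the other explicit constants in \eqref{formula:ovest}. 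With $\Ep,\Em$ themselves the crossing time would be smaller (hence still an upper bound for $\tau(M_i)$) but not expressible in closed form. Once you make this substitution the computation is exactly the paper's: the $(\overline E,\overline F)$-block has eigenvalues $\kappa_\pm$, one checks $\mu_M+\kappa_->0$ under the stated hypotheses, and the maximum of the three crossing times is $t^M_{\min}$, which equals the right-hand side of \eqref{formula:ovest}.
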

\begin{proof}
 The proof relies on explicit computation of sub- and super-solutions, detailed in Appendix~\ref{sec:entrancetime}.
\end{proof}

\begin{remark}
The dependency in $\psi$ of Proposition \ref{prop:upperboundtauMi}'s upper estimate on $\tau$ is approximately equal to $\frac{1}{\min(\nu_E + \mu_E, \mu_F)}$. One order of magnitude of $\psi$ (the ratio between the wild population size and the Allee population size) therefore typically corresponds to the maximum of one adult female and one egg lifespan in terms of release duration needed to get extinction.
\end{remark}

\begin{remark}
 At this stage, we obtain an analytic upper bound only in the case of massive releases ($\epsilon$ small enough). A more refined upper bound could theoretically be obtained, see the derivation in Appendix~\ref{sec:entrancetime}, in particular Lemma~\ref{lem:secondub}.
\end{remark}

\subsection{Adding a control by means of releases}

In a slightly more realistic model, the level of sterilizing male population should vary with time, depending on the releases $t \mapsto u(t) \geq 0$ and on a fixed death rate $\mu_i$. This model reads
\beq
 \bepa
 \df{d E}{dt} = b F ( 1 - \f{E}{K}) - (\nu_E + \mu_E) E,
\\[10pt]
\df{d M}{dt} = (1-r) \nu_E E - \mu_M M,
\\[10pt]
\df{d M_i}{dt} = u(t) - \mu_i M_i,
\\[10pt]
\df{dF}{dt} = r \nu_E E (1 - e^{-\beta (M + \gamma_i M_i)}) \f{M}{M+\gamma_i M_i} - \mu_F F.
 \label{sys:controlled}
 \eepa
 \eeq
In \eqref{sys:controlled}, the number of sterilizing males released between times $t_1$ and $t_2 > t_1$ is simply equal to $\int_{t_1}^{t_2} u(t) dt$.

First, if the release is {\it constant}, say $u(t) \equiv u_0$, then $M_i (t) = e^{-\mu_i t} M_i^0 + \frac{u_0}{\mu_i} (1 - e^{-\mu_i t})$. The special case $M_i^0 = \frac{u_0}{\mu_i}$ leads back to system \eqref{sys:simplifie}, with $M_i \equiv M_i^0$.
For general $M_i^0 \geq 0$, we notice that $M_i (t)$ converges to $\frac{u_0}{\mu_i}$ as $t$ goes to $+\infty$.
\begin{proposition}
Assume $u(t) \equiv u_0$.

If $u_0 > \mu_i M_i^{\text{crit}}$ (defined in Lemma \ref{lem:steadystates}) then $\0$ is globally asymptotically stable.

If $u_0 < \mu_i M_i^{\text{crit}}$, then there exists open sets $\Sigma_- (u_0), \Sigma_+ (u_0) \subset \RR_+^4$, respectively the basins of attraction of $\0$ and $\EE_+$ (defined for \eqref{sys:simplifie} with $M_i = \frac{u_0}{\mu_i}$), separated by a set $\Sigma(u_0)$ which enjoys the same properties as those of $\Sigma(0)$, listed in Proposition \ref{prop:separatrice}.
\label{prop:periodiccontrol}
\end{proposition}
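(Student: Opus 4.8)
The plan is to reduce the study of the four-dimensional system \eqref{sys:controlled} with constant release $u_0$ to the three-dimensional system \eqref{sys:simplifie} with $M_i \equiv u_0/\mu_i$, exploiting the fact that the $M_i$-equation is decoupled and asymptotically autonomous. The first step is to observe that for $u(t) \equiv u_0$ the third equation integrates explicitly to $M_i(t) = e^{-\mu_i t} M_i^0 + \frac{u_0}{\mu_i}(1 - e^{-\mu_i t})$, which converges monotonically to $\bar{M}_i := u_0/\mu_i$ and is therefore squeezed between $\min(M_i^0, \bar{M}_i)$ and $\max(M_i^0, \bar{M}_i)$ for all $t \geq 0$. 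In particular $(E,M,M_i,F)$ stays forward-bounded (using Lemma 2.2-type arguments on $\mathcal{E}_3$, which goes through verbatim since the extra term only enters the $F$-equation through the monotone-in-$M_i$ factor), and the $\omega$-limit set of any trajectory is contained in a slice $\{M_i = \bar M_i\}$ which is invariant for the limiting system \eqref{sys:simplifie}.

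For the first claim, assume $u_0 > \mu_i M_i^{\mathrm{crit}}$. By monotonicity of the invasion factor $(1-e^{-\beta(M+\gamma_i M_i)})\frac{M}{M+\gamma_i M_i}$ in $M_i$ (decreasing), I would bound the $F$-equation of \eqref{sys:controlled} from above by the $F$-equation of \eqref{sys:simplifie} evaluated at a constant $M_i$-level strictly larger than $M_i^{\mathrm{crit}}$ for all $t \geq t_\ast$ (possible since $M_i(t) \to \bar M_i > M_i^{\mathrm{crit}}$), giving a cooperative super-system whose only steady state is $\0$; by Proposition \ref{prop:almostall} that super-system is globally asymptotically stable, so the original trajectory is trapped below a trajectory converging to $\0$, hence converges to $\0$ as well. (A small technical point: one must first flow from $t=0$ to $t_\ast$, which only moves the state within the forward-bounded region, then apply the comparison.) Combined with the fact that $\0$ is LAS for \eqref{sys:controlled} — checked directly from the Jacobian, exactly as in Lemma \ref{lem:steadystates} — this yields global asymptotic stability.

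For the second claim, assume $u_0 < \mu_i M_i^{\mathrm{crit}}$, so $\bar M_i < M_i^{\mathrm{crit}}$ and the limiting system \eqref{sys:simplifie} with $M_i \equiv \bar M_i$ is bistable with steady states $\0 \ll \EE_- \ll \EE_+$. I would first handle the case $M_i^0 \leq \bar M_i$, where $M_i(t) \leq \bar M_i$ for all $t$ and the full system \eqref{sys:controlled} is itself monotone on $\mathcal{E}_3 \times \RR_+$ (same Jacobian sign check); here one can construct $\Sigma(u_0)$ exactly as in Proposition \ref{prop:separatrice} via the ray-crossing construction $v^0 \mapsto \rho_0(v^0)$, using that $\EE_+$ and $\0$ (viewed with the appended coordinate $M_i = \bar M_i$, which is a stable fixed point of the decoupled equation) are both LAS for \eqref{sys:controlled} and that the basins are open. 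For general $M_i^0$, I would sandwich: let $M_i^-(t), M_i^+(t)$ be the solutions of the decoupled equation from $\min(M_i^0,\bar M_i)$ and $\max(M_i^0,\bar M_i)$ — but both converge to $\bar M_i$, so instead the cleaner route is to use asymptotic autonomy. The separatrix $\Sigma(u_0)$ is defined as the boundary between the (open) basin of $\EE_+$ and the (open) basin of $\0$; its codimension-one, unordered, order-separating properties follow because trajectories started above (resp. below, in the $\RR_+^4$ order) a point of $\Sigma(u_0)$ enter $\Sigma_+(0)$ (resp. $\Sigma_-(0)$) of the limiting system after the $M_i$-transient has decayed, by the comparison argument above and Proposition \ref{prop:separatrice}. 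The boundedness statement $\Sigma_-(u_0) \subset \{X_1 \leq E_M,\, X_3 \leq F_M\}$ transfers from Proposition \ref{prop:separatrice} because large $E_0$ or large $F_0$ drives $(E,M,F)$ above $\EE_-$ regardless of the (bounded, decaying) $M_i$-history.

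The main obstacle is the non-monotonicity of \eqref{sys:controlled} when $M_i^0 > \bar M_i$: there the appended trajectory starts above its limit, and one cannot directly invoke monotone systems theory on $\RR_+^4$. I expect to circumvent this entirely by the asymptotically-autonomous / comparison argument — flowing until $M_i(t)$ is $\varepsilon$-close to $\bar M_i$ and then sandwiching the remaining evolution between two trajectories of the genuinely monotone limiting system \eqref{sys:simplifie} at levels $\bar M_i \pm \varepsilon$, both of which are bistable with separatrices close to $\Sigma(0)$ (continuity of the separatrix in the parameter $M_i$, itself a consequence of continuity of $\rho_0(v^0)$ in $M_i$). Making this continuity-and-sandwich step precise, rather than the sign checks or the ray construction, is where the real work lies.
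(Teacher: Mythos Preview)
Your approach is correct in spirit and would ultimately succeed, but it is considerably more elaborate than the paper's, and the ``main obstacle'' you identify is in fact not an obstacle at all.

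The paper's proof is one sentence: system~\eqref{sys:controlled} is monotone with respect to the control $u$, so sub- and super-solutions built from the constant-$M_i$ system~\eqref{sys:simplifie} (via Lemma~\ref{lem:steadystates} and Proposition~\ref{prop:separatrice}) immediately give the result. The point you are missing is that the four-dimensional state system is itself \emph{Type-K monotone}: computing the Jacobian as in Lemma~\ref{lem:monotone}, the only off-diagonal entry that is not non-negative on $\mathcal{E}_3 \times \RR_+$ is $\partial_{M_i}(dF/dt) < 0$. Hence the system is monotone for the orthant order with sign pattern $(+,+,-,+)$ on $(E,M,M_i,F)$, i.e.\ the order in which ``larger'' means $E,M,F$ larger and $M_i$ smaller. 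With this order the ray-crossing construction of Proposition~\ref{prop:separatrice} goes through verbatim in $\RR_+^4$, and there is nothing special about the case $M_i^0 > \bar M_i$: whether $M_i(t)$ approaches $\bar M_i$ from above or below is irrelevant, because the comparison principle holds for the Type-K order on the full state, not just for $(E,M,F)$ at a fixed $M_i$ level.

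Your sandwich-plus-asymptotic-autonomy route (flow until $M_i$ is $\varepsilon$-close to $\bar M_i$, then compare with~\eqref{sys:simplifie} at levels $\bar M_i \pm \varepsilon$, then use continuity of the separatrix in the parameter $M_i$) would work, and it has the merit of being robust to settings where no global monotone structure is available. But here it is unnecessary machinery: once you recognize the Type-K monotonicity, the transfer of all the separatrix properties from Proposition~\ref{prop:separatrice} to the four-dimensional setting is automatic, and the continuity-of-separatrix step---which you correctly flag as ``where the real work lies'' in your approach---never needs to be made precise.
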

(We do not treat the case $u_0 = \mu_i M_i^{\text{crit}}$).

\begin{proof}
Since system \eqref{sys:controlled} is monotone with respect to the control $u$ (with sign pattern $(-, -, -, +)$), we can use Lemma \ref{lem:steadystates} and Proposition \ref{prop:separatrice} with sub- and super-solution to get this result in a straightforward way.
\end{proof}

From now on we will restrict ourselves to (possibly truncated) time-periodic controls, which means that we assume that there exists $N_r \in \mathbb{Z}_+ \cup \{ +\infty \}$ (the number of release periods), a period $T > 0$ and a function $u_0 : [0, T] \to \RR_+$ such that
\begin{equation}
u(t) = \begin{cases} u_0 (t - n T) &\text{ if } n T \leq t < (n+ 1) T \text{ for some } N_r > n \in \mathbb{Z}_+, \\ 0 &\text{ otherwise.}
\end{cases}
\label{def:uperiodique}
\end{equation}
We use the notation $u \equiv [T, u_0, N_r]$ to describe this control $u$.

As before, we can compute in case \eqref{def:uperiodique}
\begin{align*}
M_i(t) &= e^{-\mu_i t} M_i^0 + \int_0^t u(t') e^{-\mu_i (t - t')} dt' 
\\
&= e^{-\mu_i t} \Big( M_i^0 + \frac{e^{\mu_i (\lfloor \frac{t}{T} \rfloor \wedge N_r) T} - 1}{e^{\mu_i T} - 1} \int_0^T u_0 (t') e^{\mu_i t'} dt' + \int_{T (\lfloor \frac{t}{T} \rfloor \wedge N_r)}^t u(t') e^{\mu_i t'} dt' \Big)
\end{align*}
(Here, for $a, b \in \mathbb{Z}$, we let $a \wedge b = \min(a, b)$).

If $N_r = +\infty$, for any $u_0 \not= 0$ there exists a unique periodic solution $M_i$, uniquely defined by its initial value
\[
M_i^{0, \text{per}} = \frac{1}{1 - e^{-\mu_i T}} \int_0^T u_0 (t') e^{\mu_i t'} dt',
\]
and which we denote by $M_i^{\text{per}} [u_0]$.
\begin{lemma}
Solutions to \eqref{sys:controlled} with $u \equiv [T, u_0, +\infty]$ are such that $M_i$ converges to $M_i^{\text{per}} [u_0]$, and the other compartments converge to a solution of
\beq
 \bepa
 \df{d E}{dt} = b F ( 1 - \f{E}{K}) - (\nu_E + \mu_E) E,
\\[10pt]
\df{d M}{dt} = (1-r) \nu_E E - \mu_M M,
\\[10pt]
\df{dF}{dt} = r \nu_E E (1 - e^{-\beta (M + \gamma_i M_i^{\text{per}}[u_0])}) \f{M}{M+\gamma_i M_i^{\text{per}}[u_0]} - \mu_F F.
 \label{sys:limper}
 \eepa
 \eeq
 Convergence takes place in the sense that the $L^{\infty}$ norm on $(t, +\infty)$ of the difference converges to $0$ as $t$ goes to~$+\infty$.
\end{lemma}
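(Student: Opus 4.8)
The plan is to handle the two assertions separately, the one on $M_i$ being elementary and the one on $(E,M,F)$ requiring the monotone structure. First I would dispatch the $M_i$ component: the third equation of \eqref{sys:controlled} is the scalar linear ODE $\dot M_i = u(t) - \mu_i M_i$, of which $M_i^{\text{per}}[u_0]$ is by construction the unique $T$-periodic solution with the same forcing $u \equiv [T,u_0,+\infty]$; hence $\delta(t) := M_i(t) - M_i^{\text{per}}[u_0](t)$ solves $\dot\delta = -\mu_i\delta$, so $\delta(t) = \delta(0)e^{-\mu_i t}$ and $\|M_i - M_i^{\text{per}}[u_0]\|_{L^\infty(t,+\infty)} = |\delta(0)|e^{-\mu_i t} \to 0$ as $t\to+\infty$. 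Along the way, from $\dot M_i^{\text{per}} \ge -\mu_i M_i^{\text{per}}$ and $M_i^{0,\text{per}}>0$ one gets the uniform lower bound $M_i^{\text{per}}(t) \ge M_i^{0,\text{per}} e^{-\mu_i T} > 0$, needed later.

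Substituting the now-explicit function $M_i(t) = M_i^{\text{per}}(t) + \delta(0)e^{-\mu_i t}$ into the remaining three equations yields a non-autonomous system $\dot Y = G(t,Y)$, $Y=(E,M,F)$, that is \emph{asymptotically $T$-periodic}: $G(t,\cdot)$ converges, uniformly on bounded sets and at rate $O(e^{-\mu_i t})$, to the right-hand side $G^{\text{per}}(t,\cdot)$ of \eqref{sys:limper}. As in Lemma \ref{lem:monotone} every solution enters $\{E\le K\}$ in finite time and is afterwards confined to a fixed box $[\0,\overline Y]$, on which $G$ and $G^{\text{per}}$ are Lipschitz uniformly in $t$; moreover the system is monotone (Lemma \ref{lem:monotone}) and monotone \emph{decreasing} in $M_i$, since $M_i$ enters only through $(1-e^{-\beta(M+\gamma_i M_i)})\tfrac{M}{M+\gamma_i M_i}$, whose derivative in $M_i$ has the sign of $\beta w e^{-\beta w} - (1-e^{-\beta w}) = (1+\beta w)e^{-\beta w} - 1 \le 0$ with $w := M+\gamma_i M_i$ --- i.e.\ the sign pattern $(-,-,-,+)$ already exploited in the proof of Proposition \ref{prop:periodiccontrol}.

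To produce the limiting solution of \eqref{sys:limper}, for each $n$ I let $Y_n$ be the solution of \eqref{sys:limper} with $Y_n(nT)=Y(nT)$; since at $t=nT$ the two right-hand sides differ by $O(e^{-\mu_i nT})$ and both trajectories stay in $[\0,\overline Y]$, a Grönwall estimate over any window $[nT,nT+\mathcal T]$ gives $\sup_{[nT,nT+\mathcal T]}|Y-Y_n| \le C(\mathcal T)e^{-\mu_i nT}$. Extracting a subsequence with $Y(n_kT)\to Y_\infty^0 \in [\0,\overline Y]$ and letting $Y_\infty$ solve \eqref{sys:limper} with $Y_\infty(0)=Y_\infty^0$ (the $T$-periodicity of $M_i^{\text{per}}$ makes the sampling phase irrelevant), continuous dependence combined with the Grönwall bound gives $Y(\cdot+n_kT)\to Y_\infty(\cdot)$ locally uniformly on $[0,+\infty)$. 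The final step is to upgrade this to $L^\infty(t,+\infty)$ convergence using monotonicity: for $\epsilon>0$ and $t$ beyond the time where $|M_i-M_i^{\text{per}}|\le\epsilon$, the trajectory $Y$ is squeezed between the solutions of \eqref{sys:limper} with $M_i^{\text{per}}$ replaced by $M_i^{\text{per}}\mp\epsilon$ (admissible thanks to the positive lower bound on $M_i^{\text{per}}$) issued from $Y(t)$, and --- invoking the periodic analogue of Propositions \ref{prop:almostall}--\ref{prop:separatrice}, i.e.\ that the monotone Poincaré map of \eqref{sys:limper} has only $\0$ and the periodic orbit $\Gamma_+$ as attractors with a codimension-one separatrix --- this identifies $Y_\infty$ independently of the subsequence and forces $\sup_{s\ge t}|Y(s)-Y_\infty(s)|\to0$.

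The routine ingredients are the scalar computation for $M_i$, the boundedness and monotonicity bookkeeping, and the windowed Grönwall estimate. The genuine difficulty lies in that last step: passing from subsequential, locally uniform convergence to uniform convergence on the whole tail, i.e.\ ruling out that the asymptotically periodic monotone trajectory keeps oscillating between the two basins or drifts along a non-attracting solution of \eqref{sys:limper}. I expect to settle this by combining the $M_i^{\text{per}}\pm\epsilon$ order-squeeze with the $\omega$-limit analysis of the monotone Poincaré map of \eqref{sys:limper}, or equivalently by appealing to the general theory of asymptotically periodic monotone semiflows (cf.\ \cite{Smith1995}); this is the only point where more than elementary ODE comparison is required.
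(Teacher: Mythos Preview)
Your proof is correct and, at its core, uses the same ingredient the paper does. The paper's entire proof reads: ``Convergence of $M_i$ is direct from the previous formula. Then, as for Proposition~\ref{prop:periodiccontrol} the monotonicity of the system implies the convergence.'' Your treatment of $M_i$ matches this exactly. For $(E,M,F)$ you have correctly diagnosed that the substantive work is the tail-$L^\infty$ convergence, and your proposed resolution --- the $M_i^{\text{per}}\pm\epsilon$ order squeeze together with asymptotically-periodic monotone semiflow theory from~\cite{Smith1995} --- is precisely what the paper's one-clause ``monotonicity implies the convergence'' is pointing to.

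Where you differ is only in the scaffolding. Your windowed Gr\"onwall bounds and the subsequence extraction used to manufacture a candidate $Y_\infty$ are not wrong, but they are superfluous: the paper's intended argument bypasses them. Since $\delta(t)=\delta(0)e^{-\mu_i t}$ keeps a fixed sign, the $(E,M,F)$-trajectory is one-sidedly comparable with solutions of~\eqref{sys:limper} from time zero, and the general monotone semiflow theory you already cite as the alternative then delivers convergence to a full orbit of the limit system directly, without any preliminary compactness step. In short, your final ``genuine difficulty'' paragraph already contains the whole proof; the Gr\"onwall/subsequence detour preceding it can be dropped.
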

\begin{proof}
 Convergence of $M_i$ is direct from the previous formula. Then, as for Proposition \ref{prop:periodiccontrol} the monotonicity of the system implies the convergence.
\end{proof}

Let $\overline{M}_i [u_0] := \max M_i^{\text{per}}[u_0]$ and $\underline{M}_i [u_0] := \min M_i^{\text{per}}[u_0]$.
\begin{proposition}
 If $\underline{M}_i[u_0] > M_i^{\text{crit}}$ then $\0$ is globally asymptotically stable for \eqref{sys:limper}.
 
 On the contrary, if $\overline{M}_i[u_0] < M_i^{\text{crit}}$ then \eqref{sys:limper} has at least one positive periodic orbit.
 In this case the basin of attraction of $\0$ contains the interval $\big( \0, \EE_- (M_i = \underline{M}_i [u_0]) \big)$, and any initial data above $\EE_+(M_i = \underline{M}_i [u_0])$ converges to $\overline{X}^{\text{per}}[u_0]$.
\end{proposition}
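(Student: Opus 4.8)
The plan is to view \eqref{sys:limper} as a $T$-periodic cooperative system and to sandwich it between the \emph{autonomous} systems \eqref{sys:simplifie} with $M_i \equiv \underline m := \underline{M}_i[u_0]$ and $M_i \equiv \overline m := \overline{M}_i[u_0]$, for which Lemma \ref{lem:steadystates} and Propositions \ref{prop:almostall}--\ref{prop:separatrice} are available. Write $\Phi_t$ for the flow of \eqref{sys:limper} (with $M_i(t)=M_i^{\text{per}}[u_0](t)$), $\Phi^m_t$ for the flow of \eqref{sys:simplifie} with constant $M_i\equiv m$, and $P:=\Phi_T$ for the Poincaré map; periodic orbits are exactly the fixed points of $P$, and $\0$ is one. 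Since \eqref{sys:limper} has the same $E$- and $M$-equations and the same Jacobian $J$ in $(E,M,F)$ as \eqref{sys:simplifie}, Lemma \ref{lem:monotone} gives that $\mathcal{E}_3=\{E\le K\}$ is forward invariant and entered in finite time and that $P$ is order preserving there, so it suffices to treat initial data in $\mathcal{E}_3$. Only the $F$-line of \eqref{sys:limper} depends on $M_i$, and $\partial_{M_i}\big[(1-e^{-\beta(M+\gamma_i M_i)})\tfrac{M}{M+\gamma_i M_i}\big]\le 0$ (because $e^{-w}(1+w)<1$ for $w>0$): this is the sign pattern $(-,-,-,+)$ already used for Proposition \ref{prop:periodiccontrol}. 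Hence, by the comparison theorem for cooperative systems on $\mathcal{E}_3$, $\Phi^{\overline m}_t(X_0)\le \Phi_t(X_0)\le \Phi^{\underline m}_t(X_0)$ for all $X_0\in\mathcal{E}_3$. Finally I record a super-equilibrium $\widehat K\gg\EE_+(m)$ of \eqref{sys:limper} (the bounds $\dot E<0$ for $E\ge K$, $\dot M\le(1-r)\nu_E K-\mu_M M$ and $\dot F\le r\nu_E K-\mu_F F$ on $\{E\le K\}$ make $[\0,\widehat K]$ forward invariant for a suitable $\widehat K$, entered in finite time): thus $P(\widehat K)\le\widehat K$.

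\emph{First statement.} If $\underline m>M_i^{\text{crit}}$, Lemma \ref{lem:steadystates} says \eqref{sys:simplifie} with $M_i\equiv\underline m$ has no positive steady state, so $\0$ is globally asymptotically stable for it (Proposition \ref{prop:almostall}). For $X_0\in\mathcal{E}_3$ we then have $\0\le\Phi_t(X_0)\le\Phi^{\underline m}_t(X_0)\to\0$, so $\Phi_t(X_0)\to\0$; stability of $\0$ for \eqref{sys:limper} follows from the same squeeze and stability of $\0$ for the $\underline m$-system. General initial data enter $\mathcal{E}_3$ in finite time, hence $\0$ is globally asymptotically stable for \eqref{sys:limper}.

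\emph{Second statement.} Assume $\overline m<M_i^{\text{crit}}$; then $\underline m\le\overline m<M_i^{\text{crit}}$, so Lemma \ref{lem:steadystates}/Proposition \ref{prop:separatrice} provide $\EE_-(\overline m)\ll\EE_+(\overline m)$ and $\EE_-(\underline m)\ll\EE_+(\underline m)$, all with $E$-coordinate $<K$. From $P(\EE_-(\overline m))\ge\Phi^{\overline m}_T(\EE_-(\overline m))=\EE_-(\overline m)$ the sequence $P^n(\EE_-(\overline m))$ is non-decreasing; it is bounded above by $P^n(\widehat K)\le\widehat K$, so it converges to a fixed point $\gg\0$ of $P$, i.e.\ to a positive periodic orbit of \eqref{sys:limper}. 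Next, since $P(\widehat K)\le\widehat K$, the sequence $P^n(\widehat K)$ is non-increasing and converges to a fixed point $\overline{X}^{\text{per}}[u_0]$. This is the \emph{maximal} periodic orbit: any periodic orbit lies in $\mathcal{E}_3$ (its $E$-component cannot stay above $K$), so a periodic point $q$ satisfies $q=P^n(q)\le\Phi^{\underline m}_{nT}(q)$, whose $\omega$-limit under the $\underline m$-flow lies in $[\0,\EE_+(\underline m)]$ (globally attractive, Proposition \ref{prop:almostall}); hence $q\le\EE_+(\underline m)\le\widehat K$ and therefore $q=P^n(q)\le P^n(\widehat K)\to\overline{X}^{\text{per}}[u_0]$. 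In particular $\overline{X}^{\text{per}}[u_0]$ dominates the positive periodic orbit found above, so $\overline{X}^{\text{per}}[u_0]\gg\0$; and the same argument shows $P^n(\EE_+(\underline m))\downarrow\overline{X}^{\text{per}}[u_0]$ (non-increasing since $P(\EE_+(\underline m))\le\Phi^{\underline m}_T(\EE_+(\underline m))=\EE_+(\underline m)$, with limit the maximal fixed point $\le\EE_+(\underline m)$). For the basin of $\0$: if $\0<X_0<\EE_-(\underline m)$ then $X_0\in[\0,\EE_-(\underline m))\subset\mathcal{E}_3$, which lies in the basin of $\0$ for the $\underline m$-system (Proposition \ref{prop:almostall}), so $\0\le\Phi_t(X_0)\le\Phi^{\underline m}_t(X_0)\to\0$, i.e.\ $\Phi_t(X_0)\to\0$.

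\emph{Convergence from above, and the main obstacle.} Let $X_0\ge\EE_+(\underline m)$; since $X_0\in(\EE_-(\underline m),\mathbf{\infty})$, Proposition \ref{prop:almostall} gives $\Phi^{\underline m}_t(X_0)\to\EE_+(\underline m)\ll\widehat K$, so $\Phi_t(X_0)\le\Phi^{\underline m}_t(X_0)\le\widehat K$ at some $t=n_1T$. Then $X_1:=P^{n_1}(X_0)$ satisfies $\overline{X}^{\text{per}}[u_0]\le P^{n_1}(\EE_+(\underline m))\le X_1\le\widehat K$, and applying $P^{\,n-n_1}$ and monotonicity yields $\overline{X}^{\text{per}}[u_0]\le P^n(X_0)\le P^{\,n-n_1}(\widehat K)\to\overline{X}^{\text{per}}[u_0]$. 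Hence $\Phi_{nT}(X_0)\to\overline{X}^{\text{per}}[u_0]$, and $\Phi_t(X_0)\to\overline{X}^{\text{per}}[u_0](t)$ by uniform continuity of the flow on $[0,T]$. The routine ingredient is monotone comparison; the delicate point is precisely this last step — obtaining genuine convergence to the maximal periodic orbit rather than mere attraction to a neighbourhood of $\EE_+(\underline m)$ — which forces one to pass to the Poincaré map, introduce the super-equilibrium $\widehat K$ to bracket orbits from above, and show that $P^n(\widehat K)$ and $P^n(\EE_+(\underline m))$ share the \emph{same} limit, the crux being that no periodic orbit of \eqref{sys:limper} exceeds $\EE_+(\underline m)$. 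One must also keep track that the cooperativity used in every comparison holds only on $\mathcal{E}_3$, so all estimates are run after the (finite) entrance time into that set.
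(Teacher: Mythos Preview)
Your proof is correct and follows essentially the same route as the paper: sandwich the periodic system between the constant-$M_i$ systems at levels $\underline m$ and $\overline m$ via monotone comparison, then pass to the Poincaré map and use monotone iteration of sub- and super-fixed points. Your variant uses $\EE_-(\overline m)$ (rather than the paper's $\EE_+(\overline m)$) to build the increasing sequence and introduces an explicit super-equilibrium $\widehat K$ to bound the decreasing one, and you supply more detail than the paper on why the decreasing limits from $\widehat K$ and from $\EE_+(\underline m)$ coincide---but these are minor refinements of the same argument.
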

\begin{proof}
System \eqref{sys:limper} is a periodic monotone dynamical system. It
admits a unique non-negative solution $X=\left(E,M,F\right)$.
In fact, we consider the constant sterile population model
\begin{equation}
\bepa
\df{dE_m}{dt}=bF_m\left(1-\df{E_m}{K}\right)-(\nu_{E}+\mu_{E})E_m,
\\[10pt]
\df{dM_m}{dt}=(1-r)\nu_{E}E_m-\mu_{M}M_m,
\\[10pt]
\df{dF_m}{dt}=r\nu_{E}\df{M_m}{M_m+\underline{M}_i[u_0]}\big(1-e^{-\beta (M_m+\gamma_i \underline{M}_i [u_0] )}\big)E_m-\mu_{F}F_m.
\eepa
\label{eq:min}
\end{equation}
such that, using a comparison principle, the solution $X_m=( E_m,M_m,F_m )$
verifies $X_m \geq X$ for all time $t>0$. Thus if $X_m$
converges to $\mathbf{0}$, so will $X$. The behavior of system \eqref{eq:min}
follows from the results obtained in the previous section. A sufficient condition to have
$\mathbf{0}$ globally asymptotically stable in \eqref{sys:limper} is therefore given by $\underline{M}_i^{\text{per}} > M_i^{\text{crit}}$.

The remainder of the claim is better seen at the level of the discrete dynamical system defined by \eqref{sys:limper}. Periodic orbits are in one-to-one correspondence with the fixed points of the monotone mapping $\Phi[u_0] : \RR_+^3 \to \RR_+^3$ defined as the Poincaré application of \eqref{sys:limper} (mapping an initial data to the solution at time $t = T$).
Now, if $X^* := (E^*, M^*, F^*)$ is the biggest ({\it i.e.} stable) steady state of \eqref{sys:simplifie} at level $M_i = \overline{M}_i[u_0] < M_i^{\text{crit}}$, then for any $(E, M, F) \gg (E^*, M^*, F^*)$ and $M'_i \leq M_i$, writing the right-hand side as $\Psi = (\Psi_1, \Psi_2, \Psi_3)$ we have
\begin{align*}
\Psi_1 (E^*, M, F, M'_i) > 0,
\\[10pt]
\Psi_2 (E, M^*, F, M'_i) > 0,
\\[10pt]
\Psi_3 (E, M, F^*, M'_i) > 0.
\end{align*}
In other words, the interval $\big( X^*, +\infty \big)$ is a positively invariant set.
Therefore, $\Phi[u_0](X^*) > X^*$. Thus the sequence $\big( \Phi[u_0]^k (X^*) \big)_k$ is increasing and bounded in $\RR_+^3$: it must converge to some $\underline{X}^* > X^*$.
The same reasoning (with reversed inequalities) applies with the sequence starting at the stable equilibrium associated with $M_i = \underline{M}_i [u_0]$: it must decrease, and thus converge to some $\overline{X}^* \geq \underline{X}^*$.

By our proof we have shown that the open interval $\big( \EE_+(M_i = \underline{M}_i [u_0]), +\infty \big)$ belongs to the basin of attraction of $\overline{X}^{\text{per}}$, and we can also assert that $\big( \EE_- (M_i = \overline{M}_i [u_0]), \EE_+ (M_i = \overline{M}_i [u_0]) \big)$ belongs to the basin of attraction of $\underline{X}^{\text{per}}$, while as usual $\big( \0, \EE_- (M_i = \underline{M}_i [u_0]) \big)$ is in the basin of attraction of~$\0$.
\end{proof}

By a direct application of the previous results
\begin{lemma}
If $\underline{M}_i [u_0] > M_i^{\text{crit}}$ then the control $u \equiv [T, u_0, n]$ (with $n \in \mathbb{Z}_+$) leads to extinction ({\it i.e.} the solution with initial data $\EE_+$ goes to $0$ as $t$ goes to $+\infty$) as soon as
\begin{equation}
n \geq \frac{\tau(\underline{M}_i [u_0])}{T}.
\label{ineq:nbreleases}
\end{equation}
\label{lem:nbreleases}
\end{lemma}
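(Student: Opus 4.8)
The plan is to turn this into a pure comparison argument, using exactly the monotonicity that was already exploited for \eqref{eq:min}, together with the definition of $\tau$ and the structure of $\Sigma_-$ from Proposition \ref{prop:separatrice}. Throughout I regard \eqref{sys:controlled} (equivalently \eqref{sys:simplifie}) as a system driven by the external sterilizing input $M_i(\cdot)$, which is \emph{order-reversing} in that input on the invariant region $\{E\le K\}$: since the map $z\mapsto (1-e^{-\beta z})/z$ is non-increasing (this is precisely the monotonicity observed in the proof of Lemma \ref{lem:monotone}), a pointwise larger input $M_i(\cdot)$ produces a pointwise smaller trajectory $(E,M,F)$ from the same initial datum. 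I will take the sterilizing population in its periodic regime, $M_i(0)=M_i^{0,\mathrm{per}}[u_0]$ (the case $M_i(0)\ge M_i^{0,\mathrm{per}}[u_0]$ is identical, and this is the natural setting for a ``direct application'' of the foregoing results), and I start the wild compartments at $(E,M,F)(0)=\Ep(M_i=0)$, which is the largest initial datum of interest.

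First I would establish domination on the release window $[0,nT]$. With $u\equiv[T,u_0,n]$ the control is still active there, so $M_i(t)=M_i^{\mathrm{per}}[u_0](t)\ge \underline{M}_i[u_0]$ for all $t\in[0,nT]$; hence by the comparison principle above, $(E,M,F)(t)\le (E_m,M_m,F_m)(t)$ on $[0,nT]$, where $(E_m,M_m,F_m)$ solves \eqref{sys:simplifie} with the \emph{constant} input $M_i\equiv\underline{M}_i[u_0]$ and the same initial datum $\Ep(M_i=0)$ --- exactly the majorant appearing in \eqref{eq:min}. Next I would show this majorant has already reached the extinction set at time $nT$: since $\underline{M}_i[u_0]>M_i^{\mathrm{crit}}$, Lemma \ref{lem:steadystates} gives no positive steady state at level $\underline{M}_i[u_0]$, so by Proposition \ref{prop:almostall} the origin is globally asymptotically stable for that constant system and $(E_m,M_m,F_m)(t)\to\0$; as $\Sigma_-(M_i=0)$ is the basin of attraction of the locally stable equilibrium $\0$ of \eqref{sys:simplifie} with $M_i=0$, it is an open neighbourhood of $\0$, so $(E_m,M_m,F_m)$ enters it in finite time, and that entrance time is by definition $\tau(\underline{M}_i[u_0])$. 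Thus $n\ge \tau(\underline{M}_i[u_0])/T$ forces $(E_m,M_m,F_m)(nT)\in\Sigma_-(M_i=0)$. Finally I invoke that $\Sigma_-(M_i=0)$ is downward closed: by Proposition \ref{prop:separatrice} one has $X\in\Sigma_-$ as soon as $X<\widehat X$ for some $\widehat X\in\Sigma$, and $(E,M,F)(nT)\le (E_m,M_m,F_m)(nT)<\widehat X$ for such a $\widehat X$, whence $(E,M,F)(nT)\in\Sigma_-(M_i=0)$.

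For the tail $t\ge nT$ the control vanishes, so $M_i(t)=M_i(nT)e^{-\mu_i(t-nT)}\ge 0$, and one last application of the comparison principle --- this time against the input $M_i\equiv0$ --- gives $\0\le (E,M,F)(t)\le(\bar E,\bar M,\bar F)(t)$ for $t\ge nT$, where $(\bar E,\bar M,\bar F)$ solves \eqref{sys:simplifie} with $M_i\equiv0$ and $(\bar E,\bar M,\bar F)(nT)=(E,M,F)(nT)\in\Sigma_-(M_i=0)$. By definition of $\Sigma_-(M_i=0)$ the majorant converges to $\0$, and the squeeze forces $(E,M,F)(t)\to\0$, i.e. extinction. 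The argument is genuinely a ``direct application'' of Lemmas \ref{lem:steadystates}--\ref{lem:monotone} and Propositions \ref{prop:almostall}--\ref{prop:separatrice}.

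The only real obstacle is the bookkeeping around the comparison principle. One must check that \eqref{sys:controlled} is order-preserving in its state \emph{and} order-reversing in the input $M_i$ on $\{E\le K\}$ (Lemma \ref{lem:monotone} plus the monotonicity of $z\mapsto(1-e^{-\beta z})/z$), that the initial datum lies in the invariant region --- or, if not, that the trajectory enters it in finite time, which is harmless --- and, most delicately, that switching the control off at $t=nT$ does not break the chain of inequalities. It does not, precisely because on $[0,nT]$ the actual input dominates the constant $\underline{M}_i[u_0]$, while on $[nT,+\infty)$ it dominates $0$, so the two comparisons can be glued at $t=nT$ using the fact that $(E,M,F)(nT)$ is simultaneously the endpoint of the first and the initial datum of the second.
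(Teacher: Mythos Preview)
Your argument is correct and is exactly the ``direct application of the previous results'' that the paper invokes without writing out; the comparison with the constant level $\underline{M}_i[u_0]$ on the release window, the use of $\tau$ to land in $\Sigma_-(M_i=0)$, and the final comparison with $M_i\equiv 0$ are the intended steps. One small wrinkle: the passage from $(E_m,M_m,F_m)(\tau)\in\Sigma_-(M_i=0)$ to $(E_m,M_m,F_m)(nT)\in\Sigma_-(M_i=0)$ tacitly assumes $\Sigma_-(M_i=0)$ is forward invariant for the constant-$\underline{M}_i[u_0]$ flow, which is not immediate since $\Sigma_-$ is defined via the $M_i=0$ dynamics; you can either recover this from one more comparison (against the $M_i=0$ flow started at $(E_m,M_m,F_m)(\tau)$, together with the downward closedness you already use), or simply run your final step from time $\tau$ instead of $nT$, since $M_i(t)\ge 0$ for all $t\ge\tau$ regardless of when the releases stop.
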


A special case of \eqref{sys:controlled}-\eqref{sys:limper} is obtained by choosing $u_0 = u_0^{\epsilon} = \frac{\Lambda}{\epsilon} \mathds{1}_{[0, \epsilon]}$ for some $\Lambda > 0$ and letting $\epsilon$ go to $0$. Then there exists a unique limit as $\epsilon$ goes to $0$, which is given by the following impulsive differential system derived from \eqref{sys:controlled}:
\beq
 \bepa
 \df{d E}{dt} = b F ( 1 - \f{E}{K}) - (\nu_E + \mu_E) E,
\\[10pt]
\df{d M}{dt} = (1-r) \nu_E E - \mu_M M,
\\[10pt]
\df{d M_i}{dt} = - \mu_i M_i,
\\[10pt]
M_i (nT^+) = M_i (n T) + \Lambda \text{ for } n \in \mathbb{Z}_+ \text{ with } 0 \leq n < N_r,
\\[10pt]
\df{dF}{dt} = r \nu_E E (1 - e^{-\beta (M + \gamma_i M_i)}) \f{M}{M+\gamma_i M_i} - \mu_F F.
 \label{sys:impulsive}
 \eepa
 \eeq
 
 In \eqref{sys:impulsive}, $M_i$ converges to the periodic solution
 \[
 M_i^{\text{imp}}(t) := \lim_{\epsilon \to 0} M_i^{\text{per}} [u_0^{\epsilon}] =  \frac{\Lambda e^{-\mu_i ( t - \lfloor \frac{t}{T} \rfloor T)}}{1 - e^{-\mu_i T}}
 \]
 We can compute explicitly $\underline{M}_i^{\text{imp}} := \frac{\Lambda e^{-\mu_i T}}{1-e^{-\mu_i T}}$ and $\overline{M}_i^{\text{imp}} := \frac{\Lambda }{1-e^{-\mu_i T}}$, respectively the minimum and the maximum of $M_i^{\text{imp}}$.
 We also define the following periodic monotone system as a special case of~\eqref{sys:limper}:
 \beq
 \bepa
 \df{d E}{dt} = b F ( 1 - \f{E}{K}) - (\nu_E + \mu_E) E,
\\[10pt]
\df{d M}{dt} = (1-r) \nu_E E - \mu_M M,
\\[10pt]
\df{dF}{dt} = r \nu_E E (1 - e^{-\beta (M + \gamma_i M_i^{\text{imp}})}) \f{M}{M+\gamma_i M_i^{\text{imp}}} - \mu_F F.
 \label{sys:periodicMi}
 \eepa
 \eeq
The right-hand side of system \eqref{sys:impulsive} is locally Lipschitz
continuous on $\RR^3$. Thus, using a classic existence theorem (Theorem
1.1, p. 3 in \cite{BP1993} ), there exists $T_e>0$ and a unique solution
defined from $(0,T_e)\rightarrow\mathbb{R}^{3}$. Using standard arguments,
it is straightforward to show that the positive orthant $\mathbb{R}_{+}^{3}$
is an invariant region for system \eqref{sys:impulsive}.

We estimate the (minimum) size of the releases $\Lambda$ and
periodicity $T$, such that the wild population goes to extinction.
\begin{proposition}
Let $\mathcal{S} := \df{\big(1-r\big)\nu_{E}\mathcal{N}}{4\mu_{M} \gamma_i}\big(1-\df{1}{\mathcal{N}}\big)^{2}K$. If 
\begin{equation}
T \leq \frac{1}{\mu_i} \log \big( 1 + \frac{\Lambda}{\mathcal{S}} \big)
\label{cond:0GAST}
\end{equation}
then $\0$ is globally asymptotically stable in \eqref{sys:periodicMi}.
Condition \eqref{cond:0GAST} is equivalent to $\Lambda \geq \mathcal{S} \big( e^{\mu_i T} - 1 \big).$
\label{prop:LambdaS}
\end{proposition}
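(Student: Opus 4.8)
The plan is to dominate system \eqref{sys:periodicMi} by the constant-density model \eqref{sys:simplifie}, the key observation being that the threshold $\mathcal{S}$ appearing in \eqref{cond:0GAST} coincides with the quantity $\widetilde{y}$ of Remark \ref{rem:ratio}, which is already known to be an upper bound for $M_i^{\text{crit}}$.

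First I would rephrase the hypothesis. Since $\underline{M}_i^{\text{imp}} = \frac{\Lambda e^{-\mu_i T}}{1 - e^{-\mu_i T}} = \frac{\Lambda}{e^{\mu_i T}-1}$, condition \eqref{cond:0GAST} --- equivalently $e^{\mu_i T}-1 \le \Lambda/\mathcal{S}$, i.e. $\Lambda \ge \mathcal{S}(e^{\mu_i T}-1)$ --- says exactly that $\underline{M}_i^{\text{imp}} \ge \mathcal{S}$. Then, using $\lambda = \mu_M/((1-r)\nu_E K)$ one checks the elementary identity
\[
\mathcal{S} = \f{(1-r)\nu_E \mathcal{N}}{4\mu_M\gamma_i}\Big(1-\f{1}{\mathcal{N}}\Big)^2 K = \f{\mathcal{N}}{4\lambda\gamma_i}\Big(1-\f{1}{\mathcal{N}}\Big)^2 = \widetilde{y},
\]
and Remark \ref{rem:ratio} (proved in Appendix \ref{app:Lemproof}) gives $M_i^{\text{crit}} \le \widetilde{y}$, with strict inequality for finite $\beta$. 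Hence $\underline{M}_i^{\text{imp}} \ge \mathcal{S} = \widetilde{y} > M_i^{\text{crit}}$ (and if $\mathcal{N} \le 4\psi$ there is no positive steady state at any level and the conclusion is immediate, so one may assume $\mathcal{N} > 4\psi$ and $M_i^{\text{crit}} > 0$).

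To conclude I would compare with the constant-$M_i$ system. Any trajectory of \eqref{sys:periodicMi} enters $\mathcal{E}_3 = \{E \le K\}$ in finite time and stays there (same estimate as in Lemma \ref{lem:monotone}), so it suffices to treat initial data in $\mathcal{E}_3$. On $\mathcal{E}_3$ the system is cooperative (Lemma \ref{lem:monotone}); the $E$- and $M$-equations do not involve $M_i$, and the right-hand side of the $F$-equation is nonincreasing in $M_i$, since $s \mapsto (1-e^{-\beta s})/s$ is decreasing on $(0,+\infty)$ (because $e^{\beta s} > 1 + \beta s$). As $M_i^{\text{imp}}(t) \ge \underline{M}_i^{\text{imp}}$ for all $t$, the comparison principle for cooperative systems yields $X(t) \le X_m(t)$ for all $t \ge 0$, where $X$ solves \eqref{sys:periodicMi} and $X_m$ solves \eqref{sys:simplifie} with constant density $M_i \equiv \underline{M}_i^{\text{imp}}$ and the same initial datum. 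Since $\underline{M}_i^{\text{imp}} > M_i^{\text{crit}}$, Lemma \ref{lem:steadystates} shows that constant system has no positive steady state, so $\0$ is globally asymptotically stable for it by Proposition \ref{prop:almostall}; thus $X_m(t) \to \0$, and $0 \le X(t) \le X_m(t)$ forces $X(t) \to \0$. Global attractivity together with stability of $\0$ (again from the comparison, or by linearization) gives global asymptotic stability for \eqref{sys:periodicMi}. Equivalently, one may directly invoke the periodic-system proposition preceding Lemma \ref{lem:nbreleases}, applied to \eqref{sys:periodicMi} with $\underline{M}_i = \underline{M}_i^{\text{imp}}$.

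The only step carrying real content is the second one: recognizing $\mathcal{S} = \widetilde{y}$ and hence reading off $\mathcal{S} \ge M_i^{\text{crit}}$ from the (rather implicit) definition of $M_i^{\text{crit}}$. Should the bound $M_i^{\text{crit}} \le \widetilde{y}$ ever degenerate to an equality exactly when \eqref{cond:0GAST} is tight, the strict inequality needed for Lemma \ref{lem:steadystates} must instead be recovered from a genuinely periodic argument (the Poincaré map of \eqref{sys:periodicMi} has no positive fixed point because $M_i^{\text{imp}}(t) > \underline{M}_i^{\text{imp}}$ away from the release instants), as in the proof of the proposition preceding Lemma \ref{lem:nbreleases}; everything else is the monotone-comparison machinery already used repeatedly above.
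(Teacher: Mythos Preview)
Your proof is correct and follows essentially the same route as the paper's: identify $\mathcal{S}=\frac{\mathcal{N}}{4\lambda\gamma_i}(1-\frac{1}{\mathcal{N}})^2=\widetilde{y}\ge M_i^{\text{crit}}$, rewrite \eqref{cond:0GAST} as $\underline{M}_i^{\text{imp}}\ge\mathcal{S}$, and then invoke the comparison with the constant-$M_i$ system (equivalently, the periodic-system proposition preceding Lemma~\ref{lem:nbreleases}). You spell out the monotone-comparison step and the boundary case $\underline{M}_i^{\text{imp}}=\mathcal{S}$ more carefully than the paper does, but the underlying argument is identical.
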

\begin{proof}
We know (see Appendix \ref{app:Lemproof} and Remark \ref{rem:ratio}) that $M_i^{\text{crit}} \leq \frac{\mathcal{N}}{4 \lambda \gamma_i} \big( 1 - \frac{1}{\mathcal{N}} \big)^2$.
Hence the following is a sufficient condition for global asymptotic stability of $\0$:
\[
\underline{M}_{i}^{\text{imp}} \geq\df{\mathcal{N}}{4\lambda \gamma_i}\left(1-\df{1}{\mathcal{N}}\right)^{2}=\df{\left(1-r\right)\nu_{E}\mathcal{N}}{4\mu_{M} \gamma_i}\left(1-\df{1}{\mathcal{N}}\right)^{2}K.
\]
That is
\[
\df{\Lambda e^{-\mu_{i}\tau}}{1-e^{-\mu_{i}\tau}}\geq\df{\left(1-r\right)\nu_{E}\mathcal{N}}{4\mu_{M} \gamma_i}\left(1-\df{1}{\mathcal{N}}\right)^{2}K,
\]
and the result is proved.
\end{proof}
\begin{remark}
As a continuation of Remark \ref{rem:ratio}, we note that Proposition \ref{prop:LambdaS} gives a very simple estimate for the target ratio of sterilizing males per release over initial wild male population as a function of the period between impulsive releases in the form
\[
\rho(T) := \frac{\Lambda}{M^* (M_i = 0)} \simeq \big( e^{\mu_i T} - 1 \big) \frac{\mathcal{N} - 1}{4 \gamma_i}.
\]
\end{remark}

We can specify Lemma \ref{lem:nbreleases} for impulses and combine it with Proposition \ref{prop:upperboundtauMi} to get a sufficient condition for extinction in the impulsive cases:
\begin{proposition}
The impulsive control of amplitude $\Lambda > 0$ and period $T > 0$ satisfying $\Lambda \geq \mathcal{S} \big( e^{\mu_i T} - 1 \big)$ leads to extinction in $n$ impulses if
\begin{equation}
n \geq \frac{\tau(\underline{M}_i^{\text{imp}})}{T}, \text{ where } \underline{M}_i^{\text{imp}} = \frac{\Lambda e^{-\mu_i T}}{1 - e^{-\mu_iT}}.
\label{ineq:nbimpulses}
\end{equation}
\label{prop:nbimpulses}
\end{proposition}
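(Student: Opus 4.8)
The plan is to assemble the statement from three ingredients that are already available: the threshold estimate of Proposition~\ref{prop:LambdaS}, the entrance-time bookkeeping of Lemma~\ref{lem:nbreleases}, and the comparison structure of the impulsive system~\eqref{sys:impulsive}--\eqref{sys:periodicMi}, which is monotone in $(E,M,F)$ and nonincreasing in $M_i$ (the sign pattern $(-,-,-,+)$ used in Proposition~\ref{prop:periodiccontrol}). First I would observe that the hypothesis $\Lambda \geq \mathcal{S}\,(e^{\mu_i T}-1)$ is nothing but $\underline{M}_i^{\text{imp}} = \Lambda/(e^{\mu_i T}-1) \geq \mathcal{S}$, and that $\mathcal{S}$ coincides with $\frac{\mathcal{N}}{4\lambda\gamma_i}\big(1-\frac1{\mathcal{N}}\big)^2 \geq M_i^{\text{crit}}$ (the bound proved in Appendix~\ref{app:Lemproof} and recalled inside the proof of Proposition~\ref{prop:LambdaS}); hence $\underline{M}_i^{\text{imp}} \geq M_i^{\text{crit}}$, with strict inequality in the non-degenerate regime (the borderline equality being excluded just as $u_0=\mu_i M_i^{\text{crit}}$ is in Proposition~\ref{prop:periodiccontrol}, or treated by continuity). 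In particular, by Proposition~\ref{prop:LambdaS}, $\0$ is globally asymptotically stable for the limiting periodic system~\eqref{sys:periodicMi}.

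Next I would establish the impulsive analogue of Lemma~\ref{lem:nbreleases}. Writing $X=(E,M,F)$ for the $(E,M,F)$-component of the solution of~\eqref{sys:impulsive} with $N_r=n$ and $X(0)=\EE_+(M_i=0)$, the sterile component on $[0,nT]$ obeys the impulsive linear equation, so in the release regime it dominates its periodic profile, which satisfies $M_i^{\text{imp}}(t)\geq\underline{M}_i^{\text{imp}}$ for all $t$. Since~\eqref{sys:simplifie} is cooperative in $(E,M,F)$ and its $F$-equation is nonincreasing in $M_i$, the comparison principle gives $X(t)\leq X_m(t)$, where $X_m$ solves~\eqref{sys:simplifie} at the constant level $M_i\equiv\underline{M}_i^{\text{imp}}$ from the same initial datum. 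By definition of $\tau$, $X_m$ enters $\Sigma_-(M_i=0)$ at time $\tau(\underline{M}_i^{\text{imp}})$, and $\Sigma_-(M_i=0)$ is a lower set: if $Y'\leq Y\in\Sigma_-(M_i=0)$ then by monotonicity of~\eqref{sys:simplifie} with $M_i=0$ the orbit from $Y'$ is squeezed between $\0$ and the orbit from $Y$, hence converges to $\0$ by Proposition~\ref{prop:separatrice}. Therefore $X(\tau(\underline{M}_i^{\text{imp}}))\in\Sigma_-(M_i=0)$, so if $n\geq\tau(\underline{M}_i^{\text{imp}})/T$ then $nT\geq\tau(\underline{M}_i^{\text{imp}})$ and $X(nT)\in\Sigma_-(M_i=0)$.

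It then remains to let the releases stop. For $t>nT$ there are no further impulses, so $M_i(t)=M_i(nT^+)e^{-\mu_i(t-nT)}\geq0$, and by the same antitonicity in $M_i$ the trajectory $X(t)$ stays below the solution of~\eqref{sys:simplifie} with $M_i\equiv0$ issued from $X(nT)\in\Sigma_-(M_i=0)$; that solution converges to $\0$ by Proposition~\ref{prop:almostall}, hence $X(t)\to\0$ by the squeeze, which is exactly extinction in $n$ impulses. Feeding in the explicit upper bound of Proposition~\ref{prop:upperboundtauMi} for $\tau(\underline{M}_i^{\text{imp}})$ then turns this into the operational sufficient condition.

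The delicate point will be the passage through the transient build-up of $M_i$ from an arbitrary non-negative initial value in the second paragraph: during that transient $M_i$ may momentarily drop below $M_i^{\text{crit}}$, so the clean comparison with the constant level $\underline{M}_i^{\text{imp}}$ is not literally valid from $t=0$. I would circumvent this exactly as the lead-in sentence suggests, namely by deriving the impulsive statement as the $\epsilon\to0$ limit of Lemma~\ref{lem:nbreleases} applied to the smooth controls $u_0^\epsilon=\frac{\Lambda}{\epsilon}\mathds{1}_{[0,\epsilon]}$, using $\underline{M}_i[u_0^\epsilon]\to\underline{M}_i^{\text{imp}}$, the continuity of $\tau$, and the continuous dependence of the truncated trajectories on $\epsilon$; alternatively, by routing the argument through the periodic system~\eqref{sys:periodicMi}, whose minimal sterile level is precisely $\underline{M}_i^{\text{imp}}$. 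The strict inequality $n>\tau(\underline{M}_i^{\text{imp}})/T$ makes the limiting argument go through with room to spare, and the equality case follows by continuity; everything else — the algebraic rewriting of the release condition, the down-closedness of $\Sigma_-$, and the comparison principle for the cooperative/antitone system — is routine and already in use elsewhere in the paper.
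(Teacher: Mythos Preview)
Your proof is correct and follows essentially the same approach as the paper, which simply states that the proposition is obtained by specializing Lemma~\ref{lem:nbreleases} to the impulsive control $u_0 = \Lambda \delta_0$ (so that $\underline{M}_i[u_0] = \underline{M}_i^{\text{imp}}$) and invoking the threshold condition of Proposition~\ref{prop:LambdaS}. Your discussion is in fact more detailed than the paper's one-line justification---in particular your treatment of the transient build-up of $M_i$ and the passage through the $\eps\to 0$ limit (or equivalently through the periodic system~\eqref{sys:periodicMi}) makes explicit a point the paper leaves implicit.
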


%
\section{Numerical study}
\label{sec:num}
\subsection{Numerical method and parametrization}

In order to preserve positivity of solutions and comparison principle, we use a nonstandard finite-differences (NSFD) scheme to integrate the differential systems (see for instance \cite{Anguelov2012b} for an overview).

For system \eqref{sys:controlled}, it reads
\beq
\bepa
\df{E^{n+1}-E^{n}}{\Phi(\Delta t)} = b F_S^{n} ( 1 - \dfrac{E^{n+1}}{K}) - (\nu_E + \mu_E) E^{n}, \\[10pt]
\df{M^{n+1}-M^{n}}{\Phi(\Delta t)}=(1-r)\nu_{E}E^{n}-\mu_{M}M^{n},
\\[10pt]
\df{M_i^{n+1}-M_i^{n}}{\Phi(\Delta t)}=-\mu_{i}M_{i}^{n}+u^n,
\\[10pt]
\df{F^{n+1}-F^{n}}{\Phi(\Delta t)}=r\nu_{E}\dfrac{M^{n+1}}{M^{n+1}+M_i^{n+1}}\big(1-e^{-\beta\left(M^{n+1}+M_{i}^{n+1}\right)}\big)E^{n}-\mu_{F}F^{n},
\eepa
\label{sch:NSFD}
\eeq
where $\Delta t$ is the time discretization parameter, $\Phi(\Delta t) = \f{1-e^{-Q \Delta t}}{Q}$, $Q = \max\{\mu_M, \mu_F, \nu_E + \mu_E, \mu_i\}$ and $X^n$ (respectively $u^n$) is the approximation of $X(n \Delta t)$ (respectively $u(n \Delta t)$) for $n \in \mathbb{N}$.

\begin{table}[ht]
\centering
 \begin{tabular}{|l|c|c|c|c|c|c|c|c|c||c|}
 \hline
 Parameter & $\beta$ & $b$ & $r$ & $\mu_E$ & $\nu_E$ & $\mu_F$ & $\mu_M$ & $\gamma_i$ & $\mu_i$ & $\Delta t$
 \\
 \hline
 Value & $10^{-4} - 1$ & $10$ & $0.49$ & $0.03$ & $0.001 - 0.25$ & $0.04$ & $0.1$ & $1$ & $0.12$ & $0.1$
 \\
 \hline  
 \end{tabular}
\caption{Numerical values fixed for the simulations.}
\label{table:numbioparam}
\end{table}

We fix the value of some parameters using the values from Tables \ref{table:bioparam} and \ref{table:mathparam} (see Table \ref{table:numbioparam}). Then, in order to get results relevant for an island of $74 \textrm{ ha}$ with an estimated male population of about $69 \textrm{ ha}^{-1}$, we let $\nu_E$ and $\beta$ vary, and fix $K$ such that
\[
 M_+^* = 69\cdot 74 = 5106,
\]
that is
\[
 K = \frac{5106 \cdot \mu_M}{(1-r)\nu_E (1-\f{1}{\mathcal{N}(1-e^{-\beta \cdot 5106})})}.
\]
Recall that for the choice from Table \ref{table:numbioparam}, page \pageref{table:numbioparam}, we have
\[
 \mathcal{N} = 117.5 \frac{\nu_E}{\nu_E + 0.03}.
\]
\begin{remark}
Thus according to the values taken by $\nu_E$ in Table \ref{table:numbioparam}, page \pageref{table:numbioparam}, we have the following bounds for $\calN$:
\[
29 \leq \mathcal{N}  \leq 105.
\]
The other aggregated value of interest, $\psi = \f{\mu_M}{(1-r)\nu_E \beta K} = \f{\calN - (1 - e^{-\beta M_+^*})}{\calN M_+^* \beta}$, ranges from $1.4 \cdot 10^{-4}$ to $2$, approximately.
\end{remark}

All computations were performed using Python programming language (version 3.6.2). The most costly operation was the separatrix approximation, which needed to be done once for each set of parameter values. We first compute points close to the separatrix (see details in Section \ref{sub:numsep}), starting from a regular triangular mesh with $40$ points on each side, then we reduce the points if any comparable pairs appeared. From these (at most $861$) scattered points we build recursively a comparison tree by selecting the point $P$ which minimizes the distance to all other points, and distributing the remaining points into six subtrees, corresponding to each affine orthant whose vertex is $P$. Each tree was saved using \texttt{pickle} module, and loaded when necessary. This was done to reduce the number of operations for checking if a point is below the separatrix, as this needs to be done several times along each computed trajectory. Indeed, using the fact that two points on the separatrix cannot be related by the partial order, one only needs to investigate $3$ of the $6$ remaining orthants to determine if the candidate point is below any of the scattered points or not.
For any given input of released sterilizing males, the computation of a trajectory ended either when the maximal number of iterations was reached (here, we fixed that value at $3 \cdot 10^5$) or when it was found below the separatrix, using the comparison tree. Trial CPU times (on a laptop computer with Intel\verb!®! Core\verb!™! i5-2410M CPU \verb!@! 2.30GHz x 4 processor) for all these operations are given in Table~\ref{tab:CPU}.

\begin{table}[!ht]
 \centering
 \begin{adjustbox}{max width=\textwidth}
 \begin{tabular}{|l|c|c|c|c|c|c|c|}
  \hline
  Operation & Points & Reduction & Tree building & Save & Load & Full trajectory & Stopped trajectory
  \\ \hline
  CPU time (s) & $267$ & $12$ & $6.8$ & $1.8 \cdot 10^{-3}$ & $1 \cdot 10^{-3}$ & $17$ & $0.25$
  \\ \hline
 \end{tabular}
 \end{adjustbox}
\caption{CPU times for the numerical simulations}
\label{tab:CPU}
\end{table}

\subsection{Equilibria and effort ratio}

We first compute the position of equilibria for a range of values of $\beta$ and $\widetilde{\nu}_E$. This enables us to compute the effort ratio $\rho^*$, defined in Remark \ref{rem:ratio} as the ratio between the wild steady state male population $M^* (M_i = 0)$ and the critical constant value of sterilizing males $M_i^{\text{crit}}$ necessary in order to make $\0$ globally asymptotically stable. Values are shown in Table \ref{table:effratio}. 

\begin{table}[ht]
 \centering
 \begin{tabular}{|l|c|c|c|c|c|c|c|c|c|}
 \hline
 $\nu_E$ & 0.005 & 0.010 & 0.020 & 0.030 & 0.050 & 0.100 & 0.150 & 0.200 & 0.250 \\ \hline
 $\rho^*$ & $16$ & $30$ & $48$ & $60$ & $76$ & $93$ & $101$ & $106$ & $108$ \\ \hline
\end{tabular}
 \caption{Effort ratio $\rho^*=M_i^{\text{crit}} / M^* (M_i = 0)$ for various values of $\nu_E$. For this range of parameters, $\rho^*$ is practically independent on $\beta \in [10^{-4}, 1]$.}
 \label{table:effratio}
\end{table}
We note that $\rho^*$ depends practically only on $\nu_E$, because the Allee (with parameter $\beta$) does not apply at high population levels. In fact the ratio (and thus the control effort) increases with increasing values of $\nu_E$, that favor the maintenance of the wild population (the larger the value of $\nu_E$, the larger the value of $\mathcal{N}$ and the shorter the period in the eggs compartment).

\subsection{Computation of the basin of attraction of \texorpdfstring{$\0$}{0} for \texorpdfstring{\eqref{sys:simplifie}}{the system}}
\label{sub:numsep}
We start from a regular triangular mesh of the triangle $\{ (E,M,F) \in \RR_+^3, \quad E + M + F = 1 \}$, with $40$ points on each side. Given $\epsilon > 0$, for each vertex $V$ of this mesh we compute $\lambda \in (0, +\infty)$ such that $\lambda V \in \Sigma_-$ and $(1+\epsilon) \lambda V \in \Sigma_+$. The points $\lambda V$ (which are numerically at distance at most $\epsilon$ of the separatrix $\Sigma$) are then plotted.

\begin{figure}[ht]
 \includegraphics[width=.5\linewidth]{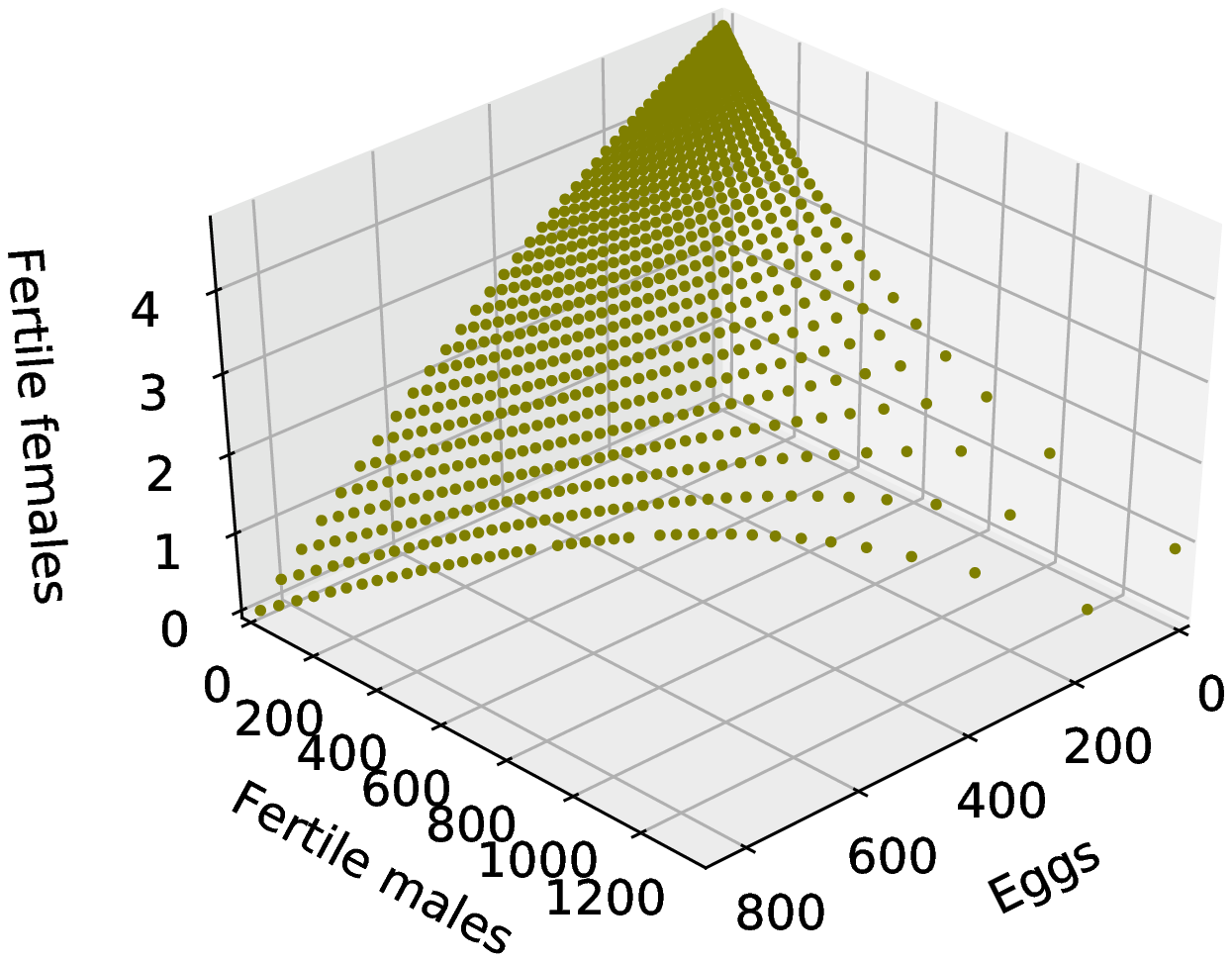}
 \includegraphics[width=.5\linewidth]{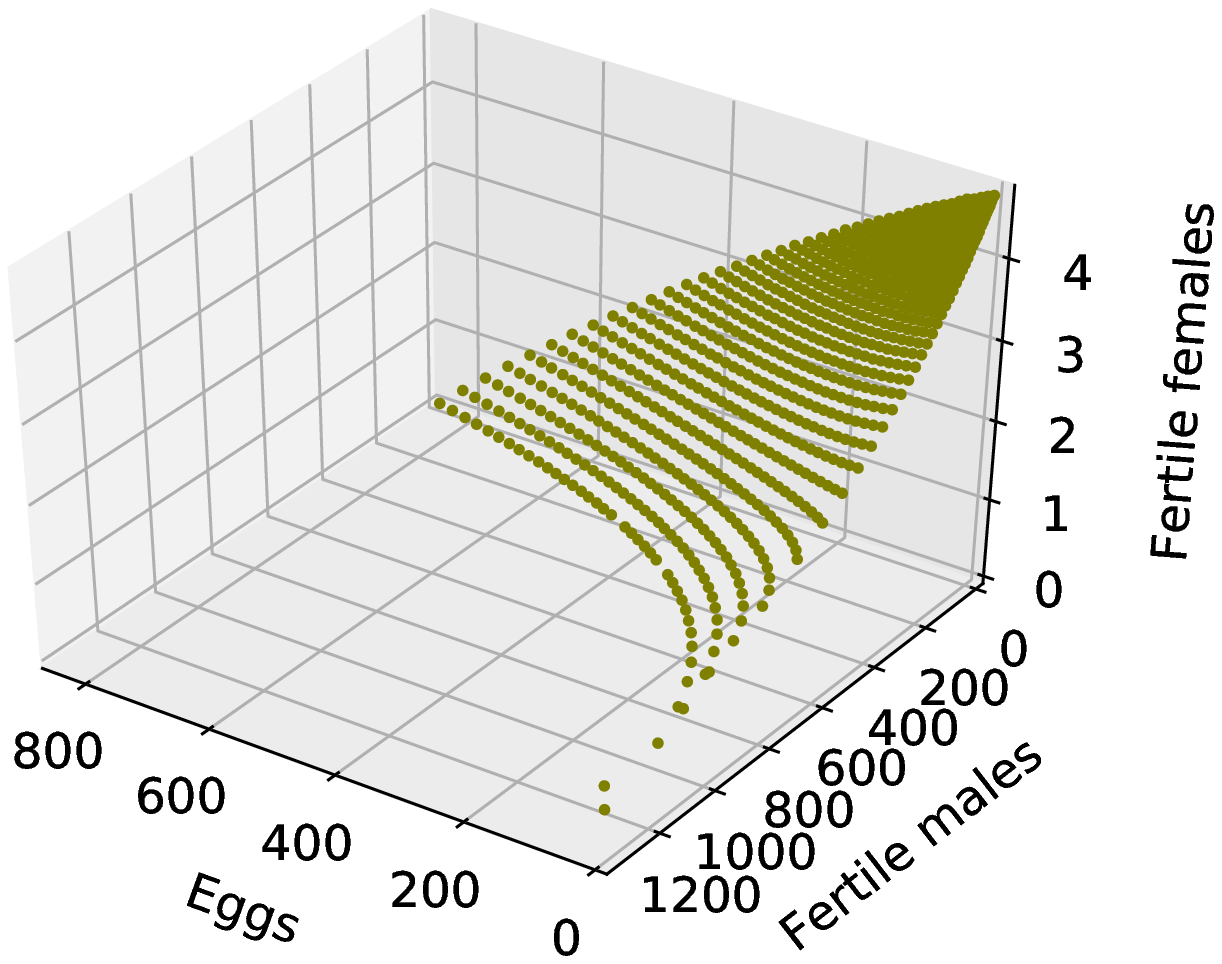}
 \caption{Two viewpoints of scattered points lying around the separatrix ($\eps = 10^{-2}$) for $\nu_E = 0.1$ and $\beta = 10^{-4}$. In this case, $5$ females or $900$ eggs are enough to prevent population elimination.}
 \label{fig:bassin}
\end{figure}

Figure \ref{fig:bassin} is typically the kind of figure that we can draw for each set of parameters. Depending on the parameters values, the basin of attraction of $\0$ can be tiny, or not. Its shape emphasizes the important role of eggs and, even, males abundance in the maintenance of the wild population. In fact, even if almost all females have disappeared, the control must go on in order to further reduce the stock of eggs before eventually reaching the separatrix.

\subsection{Constant releases and entrance time into basin}

For the same set of parameters as before, we compute the entrance time into the basin of $\0$.

First, we use Proposition \ref{prop:upperboundtauMi} to get in Table \ref{table:undest} an underestimation of the entrance time, whatever the releasing effort could be, these entrance times represent the minimal time under which the SIT control cannot be successful (in fact, this under-estimation corresponds to the situation where $M_i = +\infty$, that is an infinite releasing effort).
\begin{table}[ht]
\centering
\begin{adjustbox}{max width=\textwidth}
  \begin{tabular}{|l|c|c|c|c|c|}
\hline 
$\nu_E \backslash \beta$ &$10^{-4}$ &$10^{-3}$ &$10^{-2}$ &$10^{-1}$ &$10^{0}$ \\ \hline 
0.005 &  $  63$ & $ 151$ & $ 204$ & $ 253$ & $ 303$ \\ \hline 
0.010 &  $  93$ & $ 180$ & $ 232$ & $ 281$ & $ 331$ \\ \hline 
0.020 &  $ 118$ & $ 203$ & $ 256$ & $ 304$ & $ 354$ \\ \hline 
0.030 &  $ 130$ & $ 215$ & $ 267$ & $ 315$ & $ 365$ \\ \hline 
0.050 &  $ 141$ & $ 226$ & $ 278$ & $ 327$ & $ 377$ \\ \hline 
0.100 &  $ 152$ & $ 236$ & $ 289$ & $ 337$ & $ 387$ \\ \hline 
0.150 &  $ 156$ & $ 240$ & $ 293$ & $ 341$ & $ 391$ \\ \hline 
0.200 &  $ 158$ & $ 242$ & $ 295$ & $ 343$ & $ 393$ \\ \hline 
0.250 &  $ 160$ & $ 244$ & $ 296$ & $ 344$ & $ 395$ \\ \hline 
\end{tabular}
  \begin{tabular}{|l|c|c|c|c|c|}
\hline 
$10^{-4}$ &$10^{-3}$ &$10^{-2}$ &$10^{-1}$ &$10^{0}$ \\ \hline 
$ 323$ & $ 445$ & $ 571$ & $ 697$ & $ 824$ \\ \hline 
$ 361$ & $ 475$ & $ 592$ & $ 708$ & $ 825$ \\ \hline 
$ 381$ & $ 485$ & $ 590$ & $ 695$ & $ 800$ \\ \hline 
$ 391$ & $ 488$ & $ 587$ & $ 685$ & $ 783$ \\ \hline 
$ 440$ & $ 530$ & $ 621$ & $ 713$ & $ 804$ \\ \hline 
N/A & N/A & N/A & N/A & N/A \\ \hline 
N/A & N/A & N/A & N/A & N/A \\ \hline 
N/A & N/A & N/A & N/A & N/A \\ \hline 
N/A & N/A & N/A & N/A & N/A \\ \hline 
\end{tabular}
  \begin{tabular}{|c|c|c|c|c|}
\hline 
$10^{-4}$ &$10^{-3}$ &$10^{-2}$ &$10^{-1}$ &$10^{0}$ \\ \hline 
$ 258$ & $ 351$ & $ 448$ & $ 545$ & $ 642$ \\ \hline 
$ 286$ & $ 374$ & $ 464$ & $ 553$ & $ 643$ \\ \hline 
$ 301$ & $ 381$ & $ 462$ & $ 544$ & $ 625$ \\ \hline 
$ 307$ & $ 383$ & $ 461$ & $ 538$ & $ 615$ \\ \hline 
$ 332$ & $ 404$ & $ 477$ & $ 550$ & $ 623$ \\ \hline 
N/A & N/A & N/A & N/A & N/A \\ \hline 
N/A & N/A & N/A & N/A & N/A \\ \hline 
N/A & N/A & N/A & N/A & N/A \\ \hline 
N/A & N/A & N/A & N/A & N/A \\ \hline 
\end{tabular}
\end{adjustbox}
 \caption{Left: under-estimation of the entrance time into the basin of $\0$ from the analytic formula~\eqref{formula:undest}. Middle and right: over-estimation of the entrance time into the basin of $\0$ from formula~\eqref{formula:ovest} with $\eps = \frac{M_+^*}{M_+^* + \phi M_i^{\text{crit}}}$, when applicable, for $\phi = 8$ (middle) and $\phi = 4$ (right).}
 \label{table:undest}
\end{table}

Then we compute numerically the entrance time for a range of releasing efforts. In details, computations were performed for $M_i = \phi M_i^{\text{crit}}$ with $\phi \in \{ 1.2, 1.4, 1.6, 1.8, 2, 4, 8\}$. Results are shown in Table~\ref{table:entrancetimephivar} for $\phi = 1.2$, $\phi = 2$ and $\phi = 8$.

\begin{table}[ht]
 \begin{adjustbox}{max width=\textwidth}
  \begin{tabular}{|l|c|c|c|c|c|}
\hline 
$\nu_E \backslash \beta$ &$10^{-4}$ &$10^{-3}$ &$10^{-2}$ &$10^{-1}$ &$10^{0}$ \\ \hline 
0.005 &  $ 168$ & $ 286$ & $ 363$ & $ 435$ & $ 504$ \\ \hline 
0.010 &  $ 200$ & $ 305$ & $ 376$ & $ 441$ & $ 505$ \\ \hline 
0.020 &  $ 219$ & $ 313$ & $ 377$ & $ 437$ & $ 495$ \\ \hline 
0.030 &  $ 225$ & $ 314$ & $ 375$ & $ 434$ & $ 492$ \\ \hline 
0.050 &  $ 228$ & $ 314$ & $ 373$ & $ 431$ & $ 488$ \\ \hline 
0.100 &  $ 231$ & $ 314$ & $ 372$ & $ 430$ & $ 488$ \\ \hline 
0.150 &  $ 232$ & $ 315$ & $ 373$ & $ 431$ & $ 489$ \\ \hline 
0.200 &  $ 233$ & $ 316$ & $ 375$ & $ 433$ & $ 491$ \\ \hline 
0.250 &  $ 234$ & $ 318$ & $ 376$ & $ 434$ & $ 493$ \\ \hline 
\end{tabular}
  \begin{tabular}{|c|c|c|c|c|}
\hline 
$10^{-4}$ &$10^{-3}$ &$10^{-2}$ &$10^{-1}$ &$10^{0}$ \\ \hline 
 $ 148$ & $ 262$ & $ 338$ & $ 409$ & $ 478$ \\ \hline 
 $ 180$ & $ 283$ & $ 352$ & $ 417$ & $ 480$ \\ \hline 
 $ 199$ & $ 292$ & $ 355$ & $ 415$ & $ 473$ \\ \hline 
 $ 207$ & $ 295$ & $ 355$ & $ 413$ & $ 471$ \\ \hline 
 $ 212$ & $ 297$ & $ 355$ & $ 413$ & $ 470$ \\ \hline 
 $ 215$ & $ 298$ & $ 356$ & $ 414$ & $ 472$ \\ \hline 
 $ 217$ & $ 300$ & $ 358$ & $ 416$ & $ 474$ \\ \hline 
 $ 219$ & $ 302$ & $ 360$ & $ 418$ & $ 476$ \\ \hline 
 $ 220$ & $ 303$ & $ 362$ & $ 420$ & $ 478$ \\ \hline 
\end{tabular}
  \begin{tabular}{|c|c|c|c|c|}
\hline 
$10^{-4}$ &$10^{-3}$ &$10^{-2}$ &$10^{-1}$ &$10^{0}$ \\ \hline 
 $ 128$ & $ 237$ & $ 311$ & $ 380$ & $ 449$ \\ \hline 
 $ 160$ & $ 258$ & $ 326$ & $ 391$ & $ 454$ \\ \hline 
 $ 180$ & $ 270$ & $ 333$ & $ 392$ & $ 450$ \\ \hline 
 $ 188$ & $ 274$ & $ 334$ & $ 392$ & $ 450$ \\ \hline 
 $ 194$ & $ 278$ & $ 336$ & $ 394$ & $ 452$ \\ \hline 
 $ 200$ & $ 282$ & $ 340$ & $ 398$ & $ 456$ \\ \hline 
 $ 202$ & $ 285$ & $ 343$ & $ 401$ & $ 459$ \\ \hline 
 $ 205$ & $ 287$ & $ 345$ & $ 403$ & $ 462$ \\ \hline 
 $ 206$ & $ 289$ & $ 347$ & $ 406$ & $ 464$ \\ \hline 
\end{tabular}
 \end{adjustbox}
 \caption{Entrance time into the basin of $\0$ (in days) for various values of $(\nu_E,\beta)$, with $M_i = 1.2 M_i^{\text{crit}}$ (left), $M_i = 2 M_i^{\text{crit}}$ (middle) and $M_i = 8 M_i^{\text{crit}}$ (right).}
 \label{table:entrancetimephivar}
\end{table}

\begin{table}[ht]
 \begin{adjustbox}{max width=\textwidth}
   \begin{tabular}{|l|c|c|c|c|c|}
\hline 
$\nu_E \backslash \beta$ &$10^{-4}$ &$10^{-3}$ &$10^{-2}$ &$10^{-1}$ &$10^{0}$ \\ \hline 
0.005 &  $ 399$ & $ 680$ & $ 863$ & $1034$ & $1199$ \\ \hline 
0.010 &  $ 854$ & $1302$ & $1603$ & $1880$ & $2154$ \\ \hline 
0.020 &  $1513$ & $2166$ & $2603$ & $3022$ & $3423$ \\ \hline 
0.030 &  $1950$ & $2726$ & $3253$ & $3761$ & $4264$ \\ \hline 
0.050 &  $2482$ & $3421$ & $4059$ & $4686$ & $5315$ \\ \hline 
0.100 &  $3100$ & $4218$ & $5000$ & $5777$ & $6553$ \\ \hline 
0.150 &  $3383$ & $4581$ & $5434$ & $6278$ & $7122$ \\ \hline 
0.200 &  $3545$ & $4806$ & $5694$ & $6578$ & $7461$ \\ \hline 
0.250 &  $3649$ & $4956$ & $5869$ & $6779$ & $7689$ \\ \hline 
\end{tabular}
  \begin{tabular}{|c|c|c|c|c|}
\hline 
$10^{-4}$ &$10^{-3}$ &$10^{-2}$ &$10^{-1}$ &$10^{0}$ \\ \hline 
 $ 587$ & $1036$ & $1338$ & $1619$ & $1893$ \\ \hline 
 $1283$ & $2009$ & $2499$ & $2962$ & $3416$ \\ \hline 
 $2296$ & $3367$ & $4092$ & $4782$ & $5452$ \\ \hline 
 $2989$ & $4260$ & $5132$ & $5976$ & $6815$ \\ \hline 
 $3837$ & $5381$ & $6434$ & $7483$ & $8529$ \\ \hline 
 $4817$ & $6675$ & $7975$ & $9268$ & $10563$ \\ \hline 
 $5274$ & $7268$ & $8688$ & $10095$ & $11502$ \\ \hline 
 $5549$ & $7638$ & $9117$ & $10588$ & $12060$ \\ \hline 
 $5717$ & $7884$ & $9405$ & $10922$ & $12438$ \\ \hline 
\end{tabular}
  \begin{tabular}{|c|c|c|c|c|}
\hline 
$10^{-4}$ &$10^{-3}$ &$10^{-2}$ &$10^{-1}$ &$10^{0}$ \\ \hline 
 $2024$ & $3749$ & $4920$ & $6027$ & $7115$ \\ \hline 
 $4548$ & $7343$ & $9257$ & $11112$ & $12912$ \\ \hline 
 $8317$ & $12451$ & $15331$ & $18040$ & $20736$ \\ \hline 
 $10862$ & $15871$ & $19319$ & $22691$ & $26040$ \\ \hline 
 $14058$ & $20188$ & $24395$ & $28588$ & $32774$ \\ \hline 
 $17891$ & $25266$ & $30457$ & $35640$ & $40813$ \\ \hline 
 $19651$ & $27618$ & $33285$ & $38913$ & $44541$ \\ \hline 
 $20757$ & $29073$ & $34979$ & $40865$ & $46750$ \\ \hline 
 $21443$ & $30036$ & $36123$ & $42188$ & $48254$ \\ \hline 
\end{tabular}
 \end{adjustbox}
 \caption{Total effort ratio to get into the basin of $\0$ for various values of $(\nu_E,\beta)$, with $M_i = 1.2 M_i^{\text{crit}}$ (left), $M_i = 2 M_i^{\text{crit}}$ (middle) and $M_i = 8 M_i^{\text{crit}}$ (right). The total effort ratio in this case is defined as $M_i/M^*_+$ multiplied by $\mu_i$ times the entrance time, and corresponds to the number of males that should be released at a constant level, divided by the initial male population.}
 \label{table:totaleffortphivar}
\end{table}

We notice that the entrance times corresponding to the biggest effort ratio are of the same order of magnitude as the analytic under-estimation from formula \eqref{formula:undest}.

Another interesting output of Table \ref{table:entrancetimephivar} is that the release effort ratio is not so important in terms of duration of the control: depending on the values taken by $\nu_E$ and $\beta$, the lowest ratio needs between 4 to 7 more weeks to reach the basin, than the largest ratio. Contrary to what could have been expected, there is no linear relationship. This can be explained by the fact that a female mates only once. Thus if males are in abundance, all females have mated, and then many released males become useless with regards to sterilization. Of course, this has to be mitigated taking into account that our model implicitly assumes a homogeneous distribution, while in real, environmental parameters (like vegetation, climate, etc.) have to be taken into account \cite{dufourd2013}.
Last but not least, Table~\ref{table:totaleffortphivar}, page~\pageref{table:totaleffortphivar}, clearly emphasizes that a large effort ratio, {\itshape i.e.} $\phi=8$, means the use (and then the production) of a large number of sterile males with a really small time-saving compared to the case $\phi=2$. For instance with $\nu_E = 0.05$ and $\beta = 10^{-2}$, the total effort ratio for $\phi = 8$ is approximately $6$ times larger than for $\phi = 2$ ($24395$ against $4059$), with a time-saving of $37$ days, that is approximately one tenth of the total protocol duration ($336$ days against $373$).

In other words, releasing a large number of sterile males is not necessarily a good strategy, from the economical point of view, but also from the control point of view.


In the next subsection, We consider a more realistic scenario, where sterile males are released periodically and instantaneously (system \eqref{sys:periodicMi}).
\subsection{Periodic releases}
In the case of periodic releases by pulses $u = [T, \Lambda \delta_0, \infty]$, for a given couple $(\nu_E, \beta)$ we compute the first time $t > 0$ such that $(E,M,F) (t)$ is below one point of the previously computed separatrix.

We performed the computations with $T \in \{ 1, 2, 3, 4, 5, 6, 7, 8, 9, 10 \}$, choosing 
\[
\Lambda = K \frac{\phi (1 - r) \nu_E \mathcal{N}}{4 \mu_M} \big( 1 - \frac{1}{\mathcal{N}} \big)^2 \big( e^{\mu_i T} - 1 \big) 
\]
for $\phi \in \{ 1.2, 1.4, 1.6, 1.8, 2, 4, 8\}$.

For all combinations of $(\nu_E,\beta)$, we indicate in Table \ref{table:periodic1} the maximal and minimal (with respect to $(T, \phi)$) total effort ratio $\rho_{\text{tot}}$ defined as the number of released mosquitoes at the time when the basin of $\0$ is reached, divided by the initial male population that is:
\[
 \rho_{\text{tot}} := n_{\text{tot}} \Lambda / M_+^*, \quad n_{\text{tot}} = \min \{ \lfloor t/T \rfloor, \, (E,M,F) (t) \in \Sigma_- \}.
\]
These extremal values are obtained for a period $T$ and with an entrance time $t_*$ that are shown in parentheses.
We also indicate in Table \ref{table:periodic2} the maximal and minimal entrance times, obtained for a period $T$ and an effort ratio $\rho_{\text{tot}}$ that are shown in parentheses. Note that consistently, the minimal entrance time is always obtained for $\phi = 8$ and corresponds to the maximal effort ratio. Maximal entrance time is obtained for $T = 1$ (minimal tested period) and the minimal entrance time is obtained for $T = 10$ (maximal tested period). However, the minimal effort ratio is sometimes obtained with $T =2$.

\begin{table}[ht]
 \begin{adjustbox}{max width=\textwidth}
\begin{tabular}{|l|c|c|c|c|c|}
\hline 
$\nu_E \backslash \beta$ &$10^{-4}$ &$10^{-3}$ &$10^{-2}$ &$10^{-1}$ &$10^{0}$ \\ \hline 
0.005 &  $ 282$ ($  2, 287$) & $ 384$ ($  2, 491$) & $ 448$ ($  1, 608$) & $ 502$ ($  1, 682$) & $ 554$ ($  1, 752$) \\ \hline 
0.010 &  $ 547$ ($  1, 344$) & $ 698$ ($  2, 497$) & $ 796$ ($  1, 602$) & $ 884$ ($  1, 669$) & $ 969$ ($  2, 805$) \\ \hline 
0.020 &  $ 900$ ($  1, 357$) & $1112$ ($  1, 519$) & $1253$ ($  1, 585$) & $1386$ ($  1, 647$) & $1504$ ($  2, 771$) \\ \hline 
0.030 &  $1125$ ($  3, 363$) & $1371$ ($  1, 510$) & $1538$ ($  1, 572$) & $1696$ ($  2, 693$) & $1839$ ($  2, 752$) \\ \hline 
0.050 &  $1383$ ($  2, 379$) & $1669$ ($  1, 496$) & $1875$ ($  1, 556$) & $2066$ ($  2, 672$) & $2238$ ($  2, 730$) \\ \hline 
0.100 &  $1655$ ($  2, 370$) & $1997$ ($  1, 480$) & $2238$ ($  1, 539$) & $2458$ ($  2, 650$) & $2678$ ($  2, 708$) \\ \hline 
0.150 &  $1772$ ($  1, 388$) & $2134$ ($  1, 473$) & $2394$ ($  2, 583$) & $2632$ ($  2, 641$) & $2871$ ($  2, 699$) \\ \hline 
0.200 &  $1834$ ($  1, 384$) & $2213$ ($  1, 470$) & $2482$ ($  2, 578$) & $2731$ ($  2, 636$) & $2979$ ($  1, 738$) \\ \hline 
0.250 &  $1873$ ($  1, 382$) & $2263$ ($  1, 468$) & $2531$ ($  2, 575$) & $2787$ ($  2, 633$) & $3043$ ($  2, 692$) \\ \hline 
\end{tabular}
\begin{tabular}{|c|c|c|c|c|}
\hline 
$10^{-4}$ &$10^{-3}$ &$10^{-2}$ &$10^{-1}$ &$10^{0}$ \\ \hline 
 $1095$ ($ 10, 135$) & $1838$ ($ 10, 248$) & $2450$ ($ 10, 323$) & $2986$ ($ 10, 393$) & $3522$ ($ 10, 462$) \\ \hline 
 $2317$ ($ 10, 168$) & $3575$ ($ 10, 268$) & $4536$ ($ 10, 337$) & $5499$ ($ 10, 402$) & $6323$ ($ 10, 466$) \\ \hline 
 $4139$ ($ 10, 188$) & $6015$ ($ 10, 280$) & $7573$ ($ 10, 343$) & $8909$ ($ 10, 402$) & $10246$ ($ 10, 460$) \\ \hline 
 $5448$ ($ 10, 196$) & $7829$ ($ 10, 283$) & $9506$ ($ 10, 343$) & $11183$ ($ 10, 402$) & $12581$ ($ 10, 460$) \\ \hline 
 $7155$ ($ 10, 201$) & $9818$ ($ 10, 286$) & $11921$ ($ 10, 344$) & $14025$ ($ 10, 402$) & $15778$ ($ 10, 460$) \\ \hline 
 $8794$ ($ 10, 206$) & $12114$ ($ 10, 289$) & $14709$ ($ 10, 347$) & $17305$ ($ 10, 405$) & $19900$ ($ 10, 463$) \\ \hline 
 $9522$ ($ 10, 209$) & $13603$ ($ 10, 291$) & $15948$ ($ 10, 350$) & $18762$ ($ 10, 408$) & $21576$ ($ 10, 466$) \\ \hline 
 $10431$ ($ 10, 211$) & $14201$ ($ 10, 293$) & $17138$ ($ 10, 352$) & $19586$ ($ 10, 410$) & $22524$ ($ 10, 468$) \\ \hline 
 $10709$ ($ 10, 212$) & $14584$ ($ 10, 295$) & $17601$ ($ 10, 353$) & $20618$ ($ 10, 412$) & $23133$ ($ 10, 470$) \\ \hline 
\end{tabular}
\end{adjustbox}
 \caption{Minimal (left) and maximal (right) total effort ratio to get into the basin of $\0$ (in days) for various values of $(\nu_E,\beta)$, the minimum and maximum being taken with respect to $(T,\phi)$, with a period and an entrance time shown in parentheses. The total effort ratio is defined as the total number of released male mosquitoes divided by the initial (wild) male mosquito population.}
 \label{table:periodic1}
\end{table}

\begin{table}[ht]
 \begin{adjustbox}{max width=\textwidth}
\begin{tabular}{|l|c|c|c|c|c|}
\hline 
$\nu_E \backslash \beta$ &$10^{-4}$ &$10^{-3}$ &$10^{-2}$ &$10^{-1}$ &$10^{0}$ \\ \hline 
0.005 &  $135$ ($ 10, 1095$) & $248$ ($ 10, 1838$) & $323$ ($ 10, 2450$) & $393$ ($ 10, 2986$) & $462$ ($ 10, 3522$) \\ \hline 
0.010 &  $168$ ($ 10, 2317$) & $268$ ($ 10, 3575$) & $337$ ($ 10, 4536$) & $402$ ($ 10, 5499$) & $466$ ($ 10, 6323$) \\ \hline 
0.020 &  $188$ ($ 10, 4139$) & $280$ ($ 10, 6015$) & $343$ ($ 10, 7573$) & $402$ ($ 10, 8909$) & $460$ ($ 10, 10246$) \\ \hline 
0.030 &  $196$ ($ 10, 5448$) & $283$ ($ 10, 7829$) & $343$ ($ 10, 9506$) & $402$ ($ 10, 11183$) & $460$ ($ 10, 12581$) \\ \hline 
0.050 &  $201$ ($ 10, 7155$) & $286$ ($ 10, 9818$) & $344$ ($ 10, 11921$) & $402$ ($ 10, 14025$) & $460$ ($ 10, 15778$) \\ \hline 
0.100 &  $206$ ($ 10, 8794$) & $289$ ($ 10, 12114$) & $347$ ($ 10, 14709$) & $405$ ($ 10, 17305$) & $463$ ($ 10, 19900$) \\ \hline 
0.150 &  $209$ ($ 10, 9522$) & $291$ ($ 10, 13603$) & $350$ ($ 10, 15948$) & $408$ ($ 10, 18762$) & $466$ ($ 10, 21576$) \\ \hline 
0.200 &  $211$ ($ 10, 10431$) & $293$ ($ 10, 14201$) & $352$ ($ 10, 17138$) & $410$ ($ 10, 19586$) & $468$ ($ 10, 22524$) \\ \hline 
0.250 &  $212$ ($ 10, 10709$) & $295$ ($ 10, 14584$) & $353$ ($ 10, 17601$) & $412$ ($ 10, 20618$) & $470$ ($ 10, 23133$) \\ \hline 
\end{tabular}
\begin{tabular}{|c|c|c|c|c|}
\hline 
$10^{-4}$ &$10^{-3}$ &$10^{-2}$ &$10^{-1}$ &$10^{0}$ \\ \hline 
 $456$ ($  1,  317$) & $667$ ($  1,  420$) & $752$ ($  1,  474$) & $826$ ($  1,  521$) & $896$ ($  1,  565$) \\ \hline 
 $528$ ($  1,  629$) & $661$ ($  1,  749$) & $735$ ($  1,  833$) & $803$ ($  1,  909$) & $868$ ($  1,  982$) \\ \hline 
 $534$ ($  1, 1012$) & $642$ ($  1, 1179$) & $708$ ($  1, 1300$) & $771$ ($  1, 1414$) & $830$ ($  1, 1522$) \\ \hline 
 $527$ ($  1, 1246$) & $627$ ($  1, 1445$) & $690$ ($  1, 1588$) & $749$ ($  1, 1724$) & $807$ ($  1, 1860$) \\ \hline 
 $514$ ($  1, 1513$) & $605$ ($  1, 1749$) & $666$ ($  1, 1925$) & $724$ ($  1, 2090$) & $782$ ($  1, 2257$) \\ \hline 
 $494$ ($  1, 1787$) & $581$ ($  1, 2072$) & $640$ ($  1, 2279$) & $698$ ($  1, 2485$) & $755$ ($  1, 2692$) \\ \hline 
 $483$ ($  1, 1896$) & $569$ ($  1, 2200$) & $628$ ($  1, 2428$) & $686$ ($  1, 2652$) & $744$ ($  1, 2877$) \\ \hline 
 $477$ ($  1, 1953$) & $563$ ($  1, 2272$) & $622$ ($  1, 2510$) & $680$ ($  1, 2745$) & $738$ ($  1, 2979$) \\ \hline 
 $473$ ($  1, 1988$) & $559$ ($  1, 2317$) & $618$ ($  1, 2562$) & $676$ ($  1, 2802$) & $734$ ($  1, 3043$) \\ \hline 
\end{tabular}
 \end{adjustbox}
 \caption{Minimal (left) and maximal (right) entrance time into the basin of $\0$ (in days) for various values of $(\nu_E,\beta)$, the minimum and maximum being taken with respect to $(T,\phi)$, with a period and a total effort ratio shown in parentheses.}
 \label{table:periodic2}
\end{table}

Comparing Tables \ref{table:totaleffortphivar} and \ref{table:periodic1} shows that in general, a periodic control achieves the target of bringing the population into $\Sigma_-$ at a smaller cost than the constant control (in terms of total number of released mosquitoes, counted with respect to the wild population).

\subsection{Case study: Onetahi motu}

\begin{table}[!ht]\centering
 \begin{adjustbox}{max width=\textwidth}
\begin{tabular}{|l|c|c|c|c|c|}
\hline 
$\nu_E \backslash \beta$ &$10^{-4}$ &$10^{-3}$ &$10^{-2}$ &$10^{-1}$ &$10^{0}$ \\ \hline 
0.001 &  $  39$ & $ 200$ & $ 295$ & $ 376$ & $ 453$ \\ \hline 
0.002 &  $ 142$ & $ 310$ & $ 402$ & $ 480$ & $ 555$ \\ \hline 
0.005 &  $ 877$ & $1094$ & $1178$ & $1252$ & $1323$ \\ \hline 
0.008 &  N/A & N/A & N/A & N/A & N/A \\ \hline 
0.010 &  N/A & N/A & N/A & N/A & N/A \\ \hline 
0.015 &  N/A & N/A & N/A & N/A & N/A \\ \hline 
\end{tabular}
\begin{tabular}{|l|c|c|c|c|c|}
\hline 
$\nu_E \backslash \beta$ &$10^{-4}$ &$10^{-3}$ &$10^{-2}$ &$10^{-1}$ &$10^{0}$ \\ \hline 
0.001 &  $  34$ & $ 181$ & $ 272$ & $ 352$ & $ 430$ \\ \hline 
0.002 &  $ 111$ & $ 262$ & $ 350$ & $ 428$ & $ 503$ \\ \hline 
0.005 &  $ 350$ & $ 471$ & $ 554$ & $ 627$ & $ 697$ \\ \hline 
0.008 &  $1167$ & $1091$ & $1168$ & $1238$ & $1305$ \\ \hline 
0.010 &  N/A & N/A & N/A & N/A & N/A \\ \hline 
0.015 &  N/A & N/A & N/A & N/A & N/A \\ \hline 
\end{tabular}
\begin{tabular}{|l|c|c|c|c|c|}
\hline 
$\nu_E \backslash \beta$ &$10^{-4}$ &$10^{-3}$ &$10^{-2}$ &$10^{-1}$ &$10^{0}$ \\ \hline 
0.001 &  $  30$ & $ 171$ & $ 261$ & $ 341$ & $ 418$ \\ \hline 
0.002 &  $  97$ & $ 241$ & $ 327$ & $ 404$ & $ 480$ \\ \hline 
0.005 &  $ 260$ & $ 381$ & $ 462$ & $ 535$ & $ 605$ \\ \hline 
0.008 &  $ 443$ & $ 541$ & $ 618$ & $ 687$ & $ 754$ \\ \hline 
0.010 &  $ 676$ & $ 728$ & $ 802$ & $ 870$ & $ 935$ \\ \hline 
0.015 &  N/A & N/A & N/A & N/A & N/A \\ \hline 
\end{tabular}
 \end{adjustbox}
 \caption{Entrance time into the basin of $\0$ (in days) for various values of $(\nu_E,\beta)$ with constant weekly ($T = 7 \text{ days}$) releases at $p = 4$ (left), $p = 6$ (center) or $p = 8$ (right).}
 \label{table:TetiaroaEntTim}
\end{table}

We now parametrize explicitly our model to the case of Onetahi motu in Tetiaroa atoll (French Polynesia), where weekly ($T = 7 \text{ days}$) releases have been performed over a year. Male population was estimated at $69 \cdot 74 \simeq 5000$ individuals, and the initial effort ratio $p := \Lambda/M_+^*$ was estimated at~$8$.

\begin{table}[!ht]\centering
 \begin{adjustbox}{max width=\textwidth}
\begin{tabular}{|l|c|c|c|c|c|}
\hline 
$\nu_E \backslash \beta$ &$10^{-4}$ &$10^{-3}$ &$10^{-2}$ &$10^{-1}$ &$10^{0}$ \\ \hline 
0.001 &  $0.943252$ & $0.147678$ & $0.020134$ & $0.002495$ & $0.000283$ \\ \hline 
0.002 &  $0.567382$ & $0.071552$ & $0.009875$ & $0.001247$ & $0.000141$ \\ \hline 
0.005 &  $0.205116$ & $0.031070$ & $0.004439$ & $0.000568$ & $0.000069$ \\ \hline 
0.008 &  $0.133889$ & $0.021388$ & $0.003170$ & $0.000425$ & $0.000052$ \\ \hline 
0.010 &  $0.111803$ & $0.018284$ & $0.002779$ & $0.000380$ & $0.000047$ \\ \hline 
0.015 &  N/A & N/A & N/A & N/A & N/A \\ \hline 
\end{tabular}
 \end{adjustbox}
 \caption{Final total female ratio $\frac{(F+F_{st})(t)}{F^* + F^*_{st}}$ at time $t$ when the trajectory enter into the basin of $\0$ for various values of $(\nu_E,\beta)$ with constant weekly ($T = 7 \text{ days}$) releases at $p = 8$.}
 \label{table:TetiaroaFemRat}
\end{table}

\begin{figure}[!ht]
 \includegraphics[width=.5\textwidth]{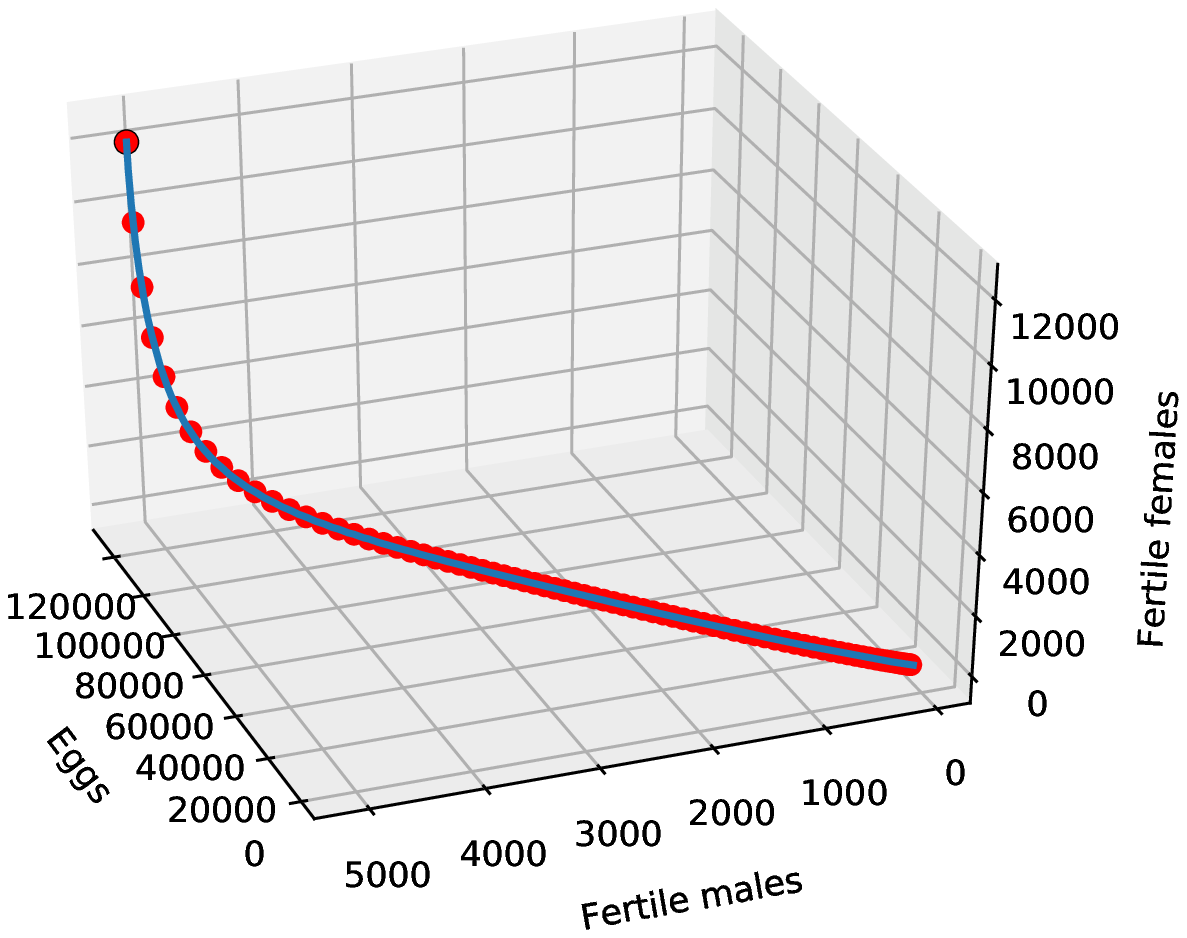}
 \includegraphics[width=.5\textwidth]{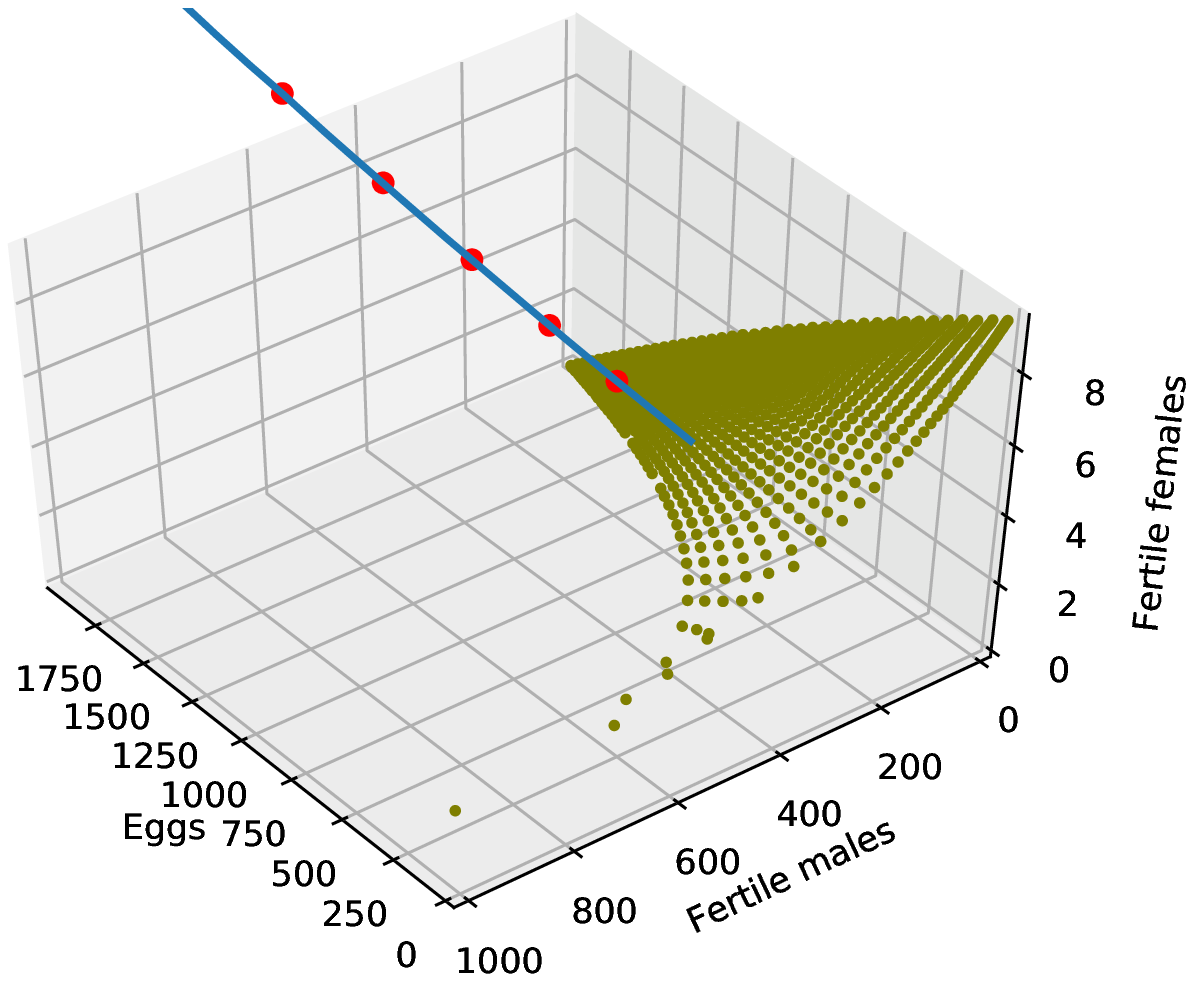}
 \caption{Trajectory $t \mapsto (E(t),M(t),F(t))$ for $\nu_E = 0.008$ and $\beta = 10^{-3}$ (left) and a zoom in the last $30$ days of treatment displaying also the separatrix as dots (right).}
 \label{fig:3dTraj}
\end{figure}

\begin{figure}[!ht]
\begin{subfigure}[t]{.5\textwidth}
 \includegraphics[width=\textwidth]{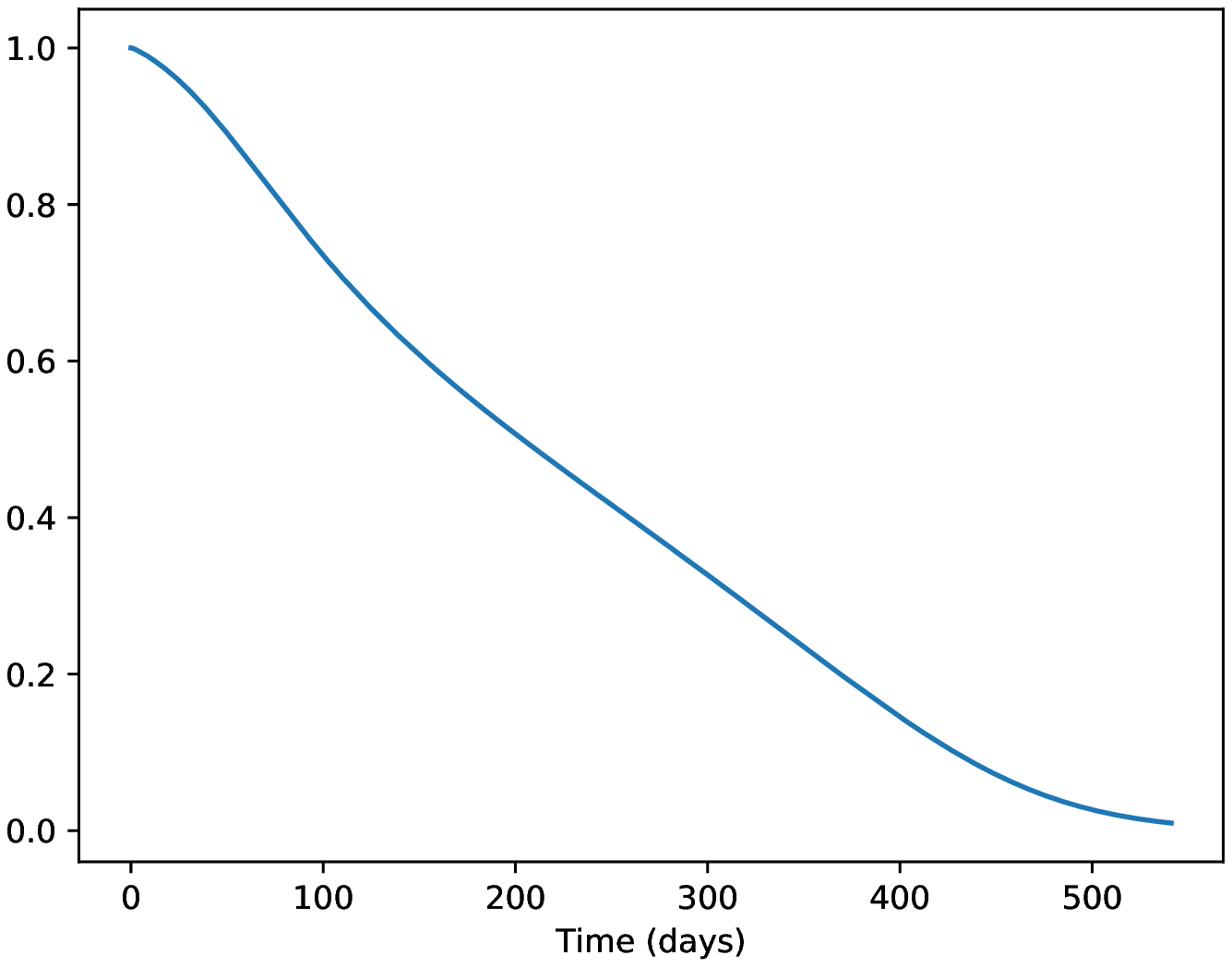}
 \caption{Relative egg count $\frac{E(t)}{E_+^*}$.}
 \label{fig:tetieggs}
\end{subfigure}
\begin{subfigure}[t]{.5\textwidth} 
 \includegraphics[width=\textwidth]{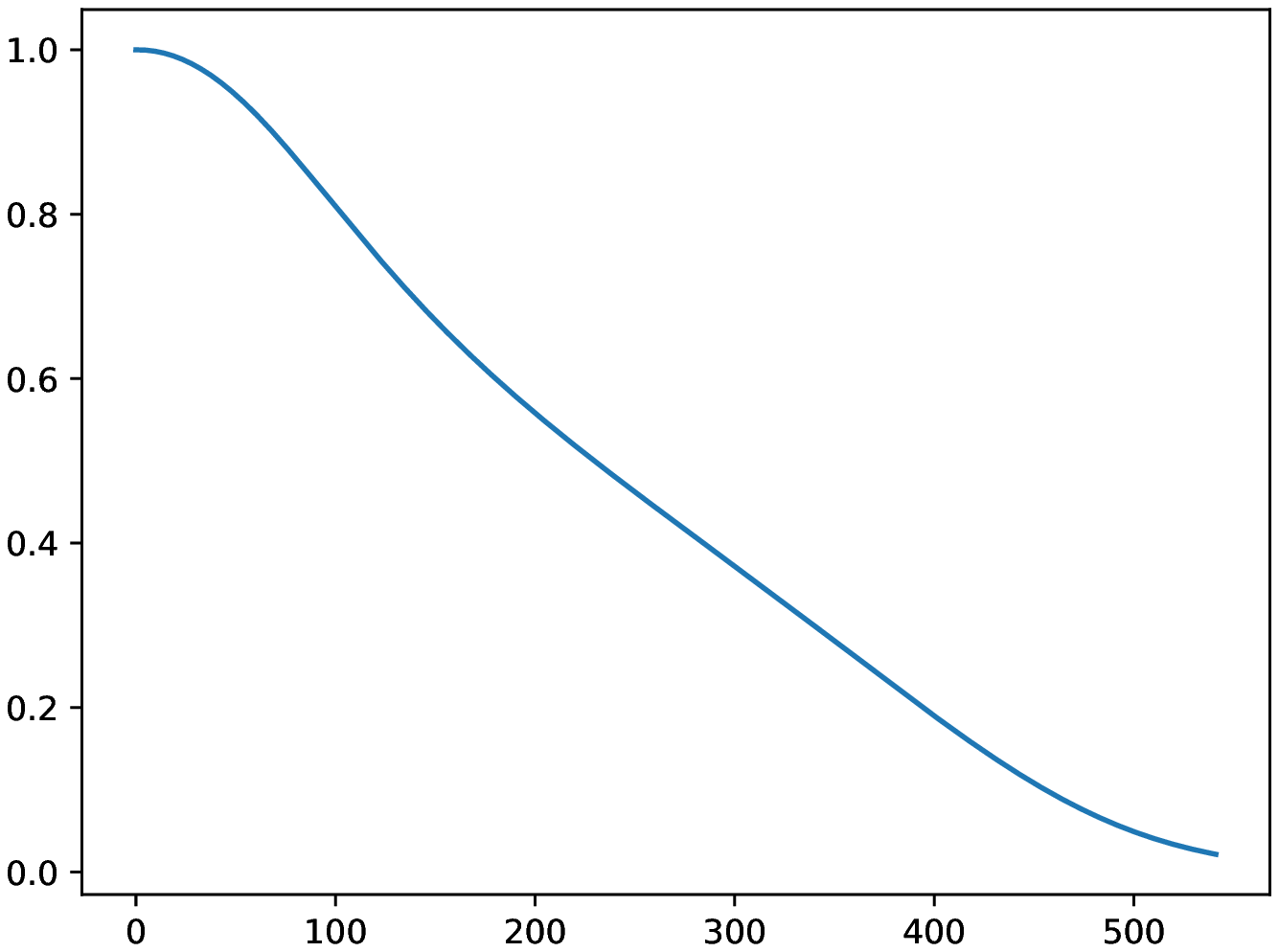}
 \caption{Relative total female count $\frac{F(t)+F_{st}(t)}{F_+^* + F_{st,+}^*}$.}
 \label{fig:tetiFemRat}
\end{subfigure}
\begin{subfigure}[b]{.5\textwidth}
 \includegraphics[width=\textwidth]{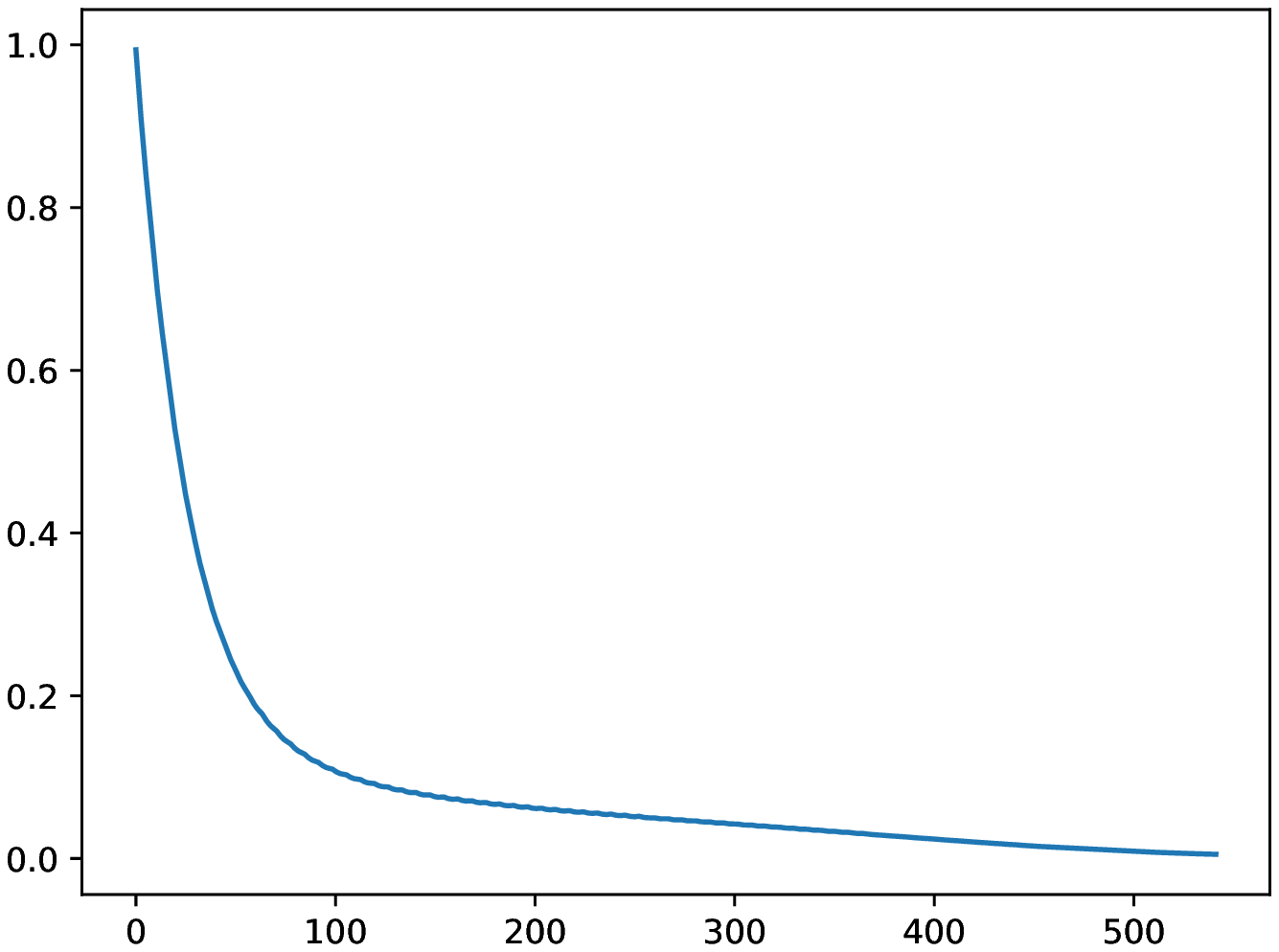}
 \caption{Fertile female ratio $\frac{F(t)}{F(t) + F_{st} (t)}$.}
 \label{fig:tetiFerFemRat}
\end{subfigure}
\begin{subfigure}[b]{.5\textwidth}
 \includegraphics[width=\textwidth]{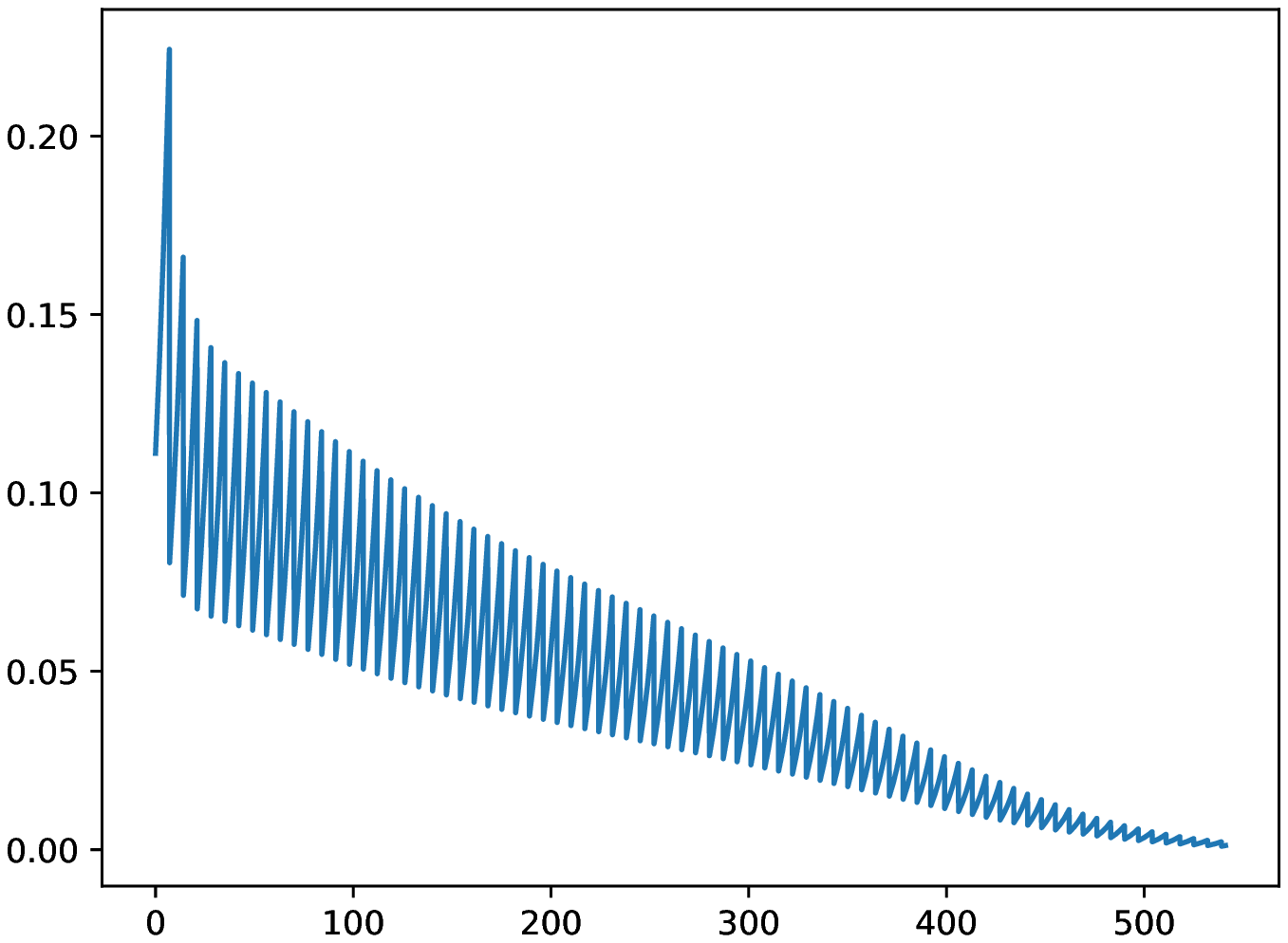}
 \caption{Fertile male ratio $\frac{M(t)}{M(t)+M_i(t)}$.}
 \label{fig:tetiFerMal}
\end{subfigure}
 \caption{Time dynamics of different ratio for $\nu_E = 0.008$ and $\beta = 10^{-3}$.}
 \label{fig:normTimeDynamics}
\end{figure}

For $p \in \{4, 6, 8\}$, entrance times (in days) are shown in Table \ref{table:TetiaroaEntTim} and final total female ratio in Table \ref{table:TetiaroaFemRat}. This last quantity is important for practical purposes to help answering the question: when is it time to stop the releases? The trap counts during the experiment are to be compared with the initial trap counts (before the releases), and roughly, the process can be stopped once the ratio between the counts goes below the values in Table \ref{table:TetiaroaFemRat}. Interestingly, $\beta$ determines the order of magnitude of this final ratio.

Table \ref{table:TetiaroaEntTim} provides us interesting information on the entrance time versus the transition rate $\nu_E$ and the mating parameter $\beta$. If the effort ratio $p$ is not large enough, the SIT treatment can fail, and even if it is large enough (say $p=8$) the time to reach the basin of $\0$ can be very large.

In the $3$-dimensional state space $(E,M,F)$ we draw the full trajectory for the same sample value $(\nu_E = 0.008, \, \beta = 10^{-3}, \, p = 8)$ along with a zoom in the last $30$ days of treatment showing also the separatrix between the basins of $\EE_+$ and $\0$ as dots in Figure \ref{fig:3dTraj}. 
According to Table \ref{table:TetiaroaEntTim}, page \pageref{table:TetiaroaEntTim}, the entrance time is $541$, which justifies that the control should last for more than one year. Our system being monotone, the trajectory is monotone decreasing (see Figure \ref{fig:3dTraj} (left), page \pageref{fig:3dTraj}). However, the rate of the decrease is relatively large at the beginning of the treatment, and then becomes small and, almost, constant.
We also show time dynamics of four relevant normalized quantities, for the same sample value $(\nu_E = 0.008, \, \beta = 10^{-3}, \, p =8)$ in Figure \ref{fig:normTimeDynamics}.

\section{Conclusion}
In this paper we have derived a minimalistic model to control mosquito population by Sterile Insect Technique, using either irradiation or the cytoplasmic incompatibility of {\it Wolbachia} to release sterilizing males. 
We particularly focus on the chance of collapsing the wild population, provided that the selected area allows elimination. Thus contrary to previous SIT and IIT models, the trivial equilibrium, $\0$ is always Locally Asymptotically Stable, at least.  
We consider different type of releases (constant, continuous, or periodic and instantaneous) and show necessary conditions to reach elimination, in each case. We also derived the minimal time under which elimination cannot occur, ({\it i.e.} entrance into the basin of attraction of $\0$ is impossible), whatever the control effort. Obviously, the knowledge on the mosquito parameters are very important, particularly the duration of the egg compartment, $\dfrac{1}{\mu_E+\nu_E}$ and the mating parameter, $\beta$. Surprisingly, mosquito entomologists have not yet really focused their experiments on $\beta$ or the probability of meeting/mating between one male and one female according to the size of the domains. Our model illustrates the importance of this parameter (and others) in the duration of the SIT control. In general, SIT entomologists recommend to release a minimum of ten times more sterile males than (estimated) wild males: this can be necessary if the competitiveness of the sterile male is weak compared to the wild ones (this can be the case with irradiation SIT approach). Our approach may help standardizing and quantifying this estimated ratio.

Finally, we focus on a real case scenario, the Onetahi motu, where a {\itshape Wolbachia} experiment has been conducted by Bossin and collaborators, driving the local mosquito, {\itshape Aedes polynesiensis}, to nearly elimination. Our preliminary results show some good agreement with field observation (mainly trapping).

Our results also show the importance of eggs in the survival of the wild population. If the egg stock is sufficiently large, and depending on weather parameters, the wild population can re-emerge after the control has stopped. That is why, according to our model and numerical results, it is recommended to pursue the release of sterilizing males even after wild mosquito females are no longer collected in monitoring traps.

Last but not least, we hope that our theoretical results will be helpful to improve future SIT experiments and particularly to take into account the long term dynamics of eggs.

\vspace{1cm}
\noindent{\paragraph{Acknowledgments:} This study was part of the “Phase 2 SIT feasibility project against Aedes albopictus in Reunion Island”, jointly funded by the Regional Council of La Reunion and the European Regional Development Fund (ERDF) under the 2014–2020 Operational Program. YD was (partially) supported by the DST/NRF SARChI Chair M3B2 grant 82770. YD and MS also acknowledge partial supports from the STIC AmSud "MOSTICAW Project", Process No. 99999.007551/2015- 00.}
\newpage

%
%
%
\appendix
\section{Proof of Lemma \ref{lem:steadystates}}
\label{app:Lemproof}

\subsection{Study of \texorpdfstring{$f$}{f}}
We first study function $f$ defined in \eqref{fun:f}.
For any $y \geq 0$, if $x \geq \f{1}{\psi}$ then $f(x, y) < - \f{1}{\mathcal{N}} (x + y)$ so in particular $f(x, y) < 0$. Therefore all steady states must satisfy $\beta M^* < \f{1}{\psi}$.
Likewise,
\[
y \geq 0, \, 0 \leq x < \f{1}{\psi} \implies (1 - \psi x) (1 - e^{-(x + y)}) < 1.
\]
Hence for all $x < \f{1}{\psi}$ we find $f(x, y) < (1 - \f{1}{\mathcal{N}}) x - \f{1}{\mathcal{N}} y$.
As a consequence, if $\mathcal{N} \leq 1$ then $f(x, y) < 0$ for all $(x, y) \in \RR_+^2 \backslash \{ 0\}$, and system \eqref{sys:simplifie} has no positive steady state.
From now on we assume that $\mathcal{N} > 1$.

We also compute directly $f(0, y) = - \f{1}{\mathcal{N}} y < 0$ and $ \lim_{x \to +\infty} f(x, y) = - \infty$.

\begin{remark}
 For all $x \in (0, 1/\psi)$, we notice that
\[
f(x, y) < Q_y (x) = - \psi x^2 + ( 1 - \f{1}{\mathcal{N}}) x - \f{y}{\mathcal{N}}.
\]
The discriminant of the second-order polynomial $Q_y$ is
\[
\Delta_y = (1 - \f{1}{\mathcal{N}})^2 - \f{4 y \psi}{\mathcal{N}}.
\]
Let $\widetilde{y} := \f{\mathcal{N}}{4 \psi} (1 - \f{1}{\mathcal{N}})^2$. If $y \geq \widetilde{y}$ then $\Delta_y \leq 0$, hence $f < 0$.
At this stage we know that if $\beta \gamma_i M_i \geq \widetilde{y}$ then there is no positive steady state. 

The quantity $\widetilde{y}$ is used in Remark \ref{rem:ratio} to obtain a first-order approximation of the target release ration.
\end{remark}

We now compute the derivatives of $f$:
\begin{align*}
\p_x f &= (1 - 2 \psi x) (1 - e^{-(x + y)}) - \f{1}{\mathcal{N}} +  x (1 - \psi x) e^{-(x + y)}, \\
\p^2_{xx} f &= - 2 \psi (1 - e^{-(x+y)}) + e^{-(x + y)} \big(  2 - (4 \psi + 1) x + \psi x^2\big) \\
\p^3_{xxx} f &= e^{-(x + y)} \Big( -6 \psi - 3 + (6 \psi + 1) x - \psi x^2 \Big) =: e^{-(x + y)} Q_3 (x)\\
\p_y f &= x (1 - \psi x)  e^{- (x+y)} - \f{1}{\mathcal{N}}, \\
\p^2_{yy} f &= - x (1 - \psi x) e^{-(x + y)} < 0 \text{ for } x \in (0, 1/\psi).
\end{align*}

Obviously, $\p_x f(x, y) < 0$ if $x \geq \f{1}{\psi}$ and $\p_x f(0, y) = 1 - e^{- y} - \f{1}{\mathcal{N}}$, which is positive if and only if $ y > - \log (1 - \f{1}{\mathcal{N}}) = \log(1 + \f{1}{\mathcal{N}-1})$.

In order to know the variations of $\p^3_{xxx} f$ we study the second-order polynomial
\[
Q_3 (x) = - 6 \psi - 3 \beta + x \big( 6 \psi + 1\big) - \psi x^2.
\]
Its discriminant is
\[
\Delta_3 = (6 \psi + 1)^2 - 4 \psi (6 \psi + 3) = 1 + 12 \psi^2,
\]
which is positive. Therefore $\p^3_{xxx} f$ is negative-positive-negative. More precisely, $Q_3$ is positive on 
\[
(w_-, w_+) := \big( \f{ 6 \psi + 1 - \sqrt{1 + 12 \psi^2}}{2 \psi}, \f{ 6 \psi + 1 + \sqrt{1 + 12 \psi^2}}{2 \psi} \big).
\]

To go one step further, we need to know the signs of $\p^2_{xx} f(w_+, y)$ and $\p^2_{xx} f(0, y)$.
We write
\[
\p^2_{xx} f(x, y) > 0 \iff e^{-(x + y)} \Big(2 + 2 \psi - (4 \psi + 1) x +  \psi x^2 \Big) > 2 \psi
\]
Hence $\p^2_{xx} f(0, y) > 0$ if and only if $y < \log( 1 + \f{1}{\psi})$.
Similarly, $\p^2_{xx} f(w_+, y) < 0$ if and only if
\[
y > \log \big(1 + \f{1}{\psi} - (2 + \f{1}{2 \psi})w_+ + \f{1}{2} w_+^2 \big) - w_+.
\]
This is always true:
\begin{lemma}
For all $\psi > 0$,
\[
 \log \big(1 + \f{1}{\psi} - (2 + \f{1}{2 \psi})w_+ + \f{1}{2} w_+^2 \big) - w_+ < 0.
\]
\end{lemma}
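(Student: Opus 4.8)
The plan is to reduce this one-variable inequality in $\psi$ to an elementary polynomial inequality by a convenient change of variable. First I would introduce $t := w_+ - 3$, so that $t = \f{1 + \sqrt{1 + 12\psi^2}}{2\psi}$ and $w_+ = 3 + t$. Squaring the identity $2\psi t - 1 = \sqrt{1 + 12 \psi^2}$ and simplifying yields $4\psi^2 t^2 - 4 \psi t = 12 \psi^2$, hence, dividing by $4\psi > 0$, the clean relation $\psi(t^2 - 3) = t$. In particular $t > \sqrt{3}$ (and conversely every $t > \sqrt{3}$ arises this way), and $\f{1}{\psi} = t - \f{3}{t}$.

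Next I would substitute $\f{1}{\psi} = t - \f{3}{t}$ and $w_+ = 3 + t$ into the argument of the logarithm. A direct expansion shows that the quadratic-in-$w_+$ expression collapses: all the $t^2$ terms cancel and one is left with
\[
1 + \f{1}{\psi} - \Big(2 + \f{1}{2\psi}\Big) w_+ + \f12 w_+^2 = 1 + \f{t}{2} + \f{3}{2t} =: A(t),
\]
which is positive for every $t > 0$, so the logarithm is well defined. The statement to prove becomes simply $\log A(t) < 3 + t$ for all $t > \sqrt{3}$.

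To finish, I would use the elementary bound $\log x \le x - 1$, which reduces the claim to $A(t) - 1 = \f{t}{2} + \f{3}{2t} < 3 + t$, i.e. to $t^2 + 6t - 3 > 0$. This is immediate for $t > \sqrt{3}$, since then $t^2 - 3 > 0$ and $6t > 0$. There is no real obstacle here: the only care needed is the algebraic simplification producing $A(t)$ and the bookkeeping of the relation $\psi(t^2 - 3) = t$, which is precisely what pins down the range $t > \sqrt{3}$ that makes the final quadratic inequality trivially true.
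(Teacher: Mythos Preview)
Your proof is correct, and the substitution $t = w_+ - 3$ is a genuinely nicer route than the one the paper takes. The key algebraic identity $\psi(t^2-3)=t$, which simultaneously fixes the range $t>\sqrt{3}$ and eliminates $\psi$, is exactly what makes the argument of the logarithm collapse to the simple form $A(t)=1+\tfrac{t}{2}+\tfrac{3}{2t}$; I checked the expansion and the cancellation of the $t^2$ terms, and it is right. After that, the bound $\log x\le x-1$ (strict since $A(t)>1$) finishes the job via the trivially true inequality $t^2+6t-3>0$ on $t>\sqrt{3}$.

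For comparison, the paper instead sets $\gamma=\tfrac{1}{2\psi}$, rewrites the inequality as $\log\big(7+3\gamma+\gamma^2+(4+\gamma)\sqrt{3+\gamma^2}\big)<3+\gamma+\sqrt{3+\gamma^2}$, and then proves this by showing the associated function $g(\gamma)$ is decreasing and checking $g(0)<0$. Your substitution is in fact related to theirs by $t=\gamma+\sqrt{3+\gamma^2}$, but by passing all the way to $t$ you remove the square root entirely, avoid the differentiation step, and reduce everything to a one-line polynomial estimate. (Incidentally, in the variable $\gamma$ the true argument of the logarithm simplifies to $1+\sqrt{3+\gamma^2}$, which is smaller than the expression the paper wrote down; so the paper actually proves a slightly stronger inequality than needed, whereas your computation lands on the exact quantity.) The paper's approach has the minor advantage that it produces an explicit bound at $\gamma=0$, but yours is shorter, more transparent, and needs no calculus.
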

\begin{proof}
To prove it, we introduce $\gamma = \f{1}{2 \psi}$ so that we are left with
\[
\log \big(7 + 3 \gamma + \gamma^2 +  (4 + \gamma) \sqrt{3+\gamma^2} \big) - (3 + \gamma + \sqrt{3+\gamma^2}) < 0.
\]

To check this we introduce 
\[
g(x) := \log(7+3x + x^2 + (4+x)\sqrt{3+x^2})-(3+x+\sqrt{3+x^2}),
\]
 and we want to prove that $g$ is negative. We compute that the sign of $g'(x)$ is equal to that of
\[
-(4+x)(3+x^2) - 2 x - \sqrt{3+x^2} ( 8 + 2 x + x^2) < 0.
\]
It remains to check that $g(0) = \log(7 + 4 \sqrt{3}) - (3 + \sqrt{3}) < 0$, which is true since
\[
 e^{3+\sqrt{3}} > e^{4} > 2^{4} > 7 + 8 > 7 + 4 \sqrt{3},
\]
where we used $e > 2$ and $1 < \sqrt{3} < 2$.
\end{proof}
Thus we obtain that $x \mapsto \p^2_{xx} f(x, y)$ is either positive-negative (if $y < \log(1 + \f{1}{\psi})$) or always negative (otherwise).

The conclusion of all these computations is that in both cases ($f$ is either convex-concave or simply concave), for any $y$, $f(0, y) < 0$, $f(+\infty, y) = - \infty$ so that all in all there are either $0$, $1$ or $2$ solutions to $f (x, y)= 0$, depending merely on the sign of the maximum of $x \mapsto f(x, y)$.

\subsection{Study of functions \texorpdfstring{$h_{\pm}$}{h}}
\label{fonctionh}
We move on to the next step of the proof, studying the functions $h_{\pm}$ defined in \eqref{fun:hpm}. Recall that solving $f(x, y) = 0$ (for $x, y > 0$) is equivalent to picking $\theta = e^{-(x + y)} \in (0, 1)$ and $y = h_{\pm} (\theta)$.

First, to check that $h_+$ and $h_-$ are well-defined we need to check that $1 + \xi \f{\log(\theta)}{1-\theta} > 0$ for some $\theta \in (0, 1)$. It is easily checked that this is the case on $(\theta_0 (\xi), 1)$, and $\theta_0 (\xi)$ is well-defined as soon as $\xi < 1$.

Hence if $\xi \geq 1$ then there is no nonzero steady state.
Assume therefore that $\xi < 1$. Then there exists a unique $\theta_0 (\xi) \in (0, 1)$ such that $1 - \theta - \xi \log(\theta)$ has the same sign as $\theta-\theta_0$ on $(0, 1)$, that is, $1 - \theta_0 = \f{4 \psi}{\mathcal{N} } \log(\theta_0)$.

We can check that $h_-$ is decreasing, $h_- < h_+$ on $(\theta_0, 1]$,
\[
h_{\pm} (\theta_0) = -\f{1}{2\psi} - \log(\theta_0),
\]
and
\[
h_- (1) < h_+ (1) = \f{1}{2 \psi} \big( - 1 + \sqrt{1 - \xi} \big) < 0.
\]
Indeed (recall that $\mathcal{N} \xi = 4 \psi$),
\[
h'_-(\theta) = - \f{1}{\theta} - \f{1}{\mathcal{N}} \f{\f{1}{\theta(1-\theta)}+\f{\log(\theta)}{(1-\theta)^2}}{\sqrt{1 + \xi \f{\log(\theta)}{1-\theta}}} < 0,
\]
since
\[
-\f{\log(\theta)}{1-\theta} < \f{1}{\theta}.
\]

Let $y^{\text{crit}} := \max_{\theta \in [\theta_0 (\xi), 1]} h_+ (\theta)$. If $y = y^{\text{crit}}$ then there is exactly one solution to $f(x, y) = 0$. For any $y \in [0, y^ {\text{crit}})$, there are at least two solutions. By the previous computations we know that there are at most two solutions. So in this case there are exactly two solutions. To describe them one should consider $I_1 := [ 0, h_- (\theta_0 (\xi)) ]$, if $h_- (\theta_0 (\xi)) > 0$ ($I_1 = \emptyset$ otherwise), and $I_2 = (\max(\theta_0 (\xi), 0), y^{\text{crit}})$. If $y \in I_1$ then there is a solution of the form $h_- (\theta_-)$ and one of the form $h_+ (\theta_-)$. If $y \in I_2$ then both solutions are of the form $h_+ (\theta)$, for two values of $\theta$ whose range contains the argument of $y^{\text{crit}}$.
And for $y > y^{\text{crit}}$ there is no solution.

At this stage we proved that if $\xi \geq 1$ then there is no positive steady state; if $\xi < 1$ then if $y^{\text{crit}} > 0$ then there are two positive steady states for $\beta \gamma_i M_i \in [0, y^{\text{crit}})$, $1$ for $\beta \gamma_i M_i = y^{\text{crit}}$ and $0$ for $\beta \gamma_i M_i > y^{\text{crit}}$. If $y^{\text{crit}} = 0$ then there is a unique positive steady state and if $y^{\text{crit}} < 0$ then there is no positive steady state for any $M_i \geq 0$. 

%

\subsection{Stability}

Finally, in order to compute the linearized stability of the steady states, we decompose $J = M_0 + N_0$, where $M_0$ is non-negative and $N_0$ is diagonal non-positive. Then $J$ (being Metzler, since $E < K$ at steady states) is stable if and only if $\rho(-N_0^{-1} M_0) < 1$. We compute
\[
N_0 = 
\begin{pmatrix}
- \f{bF}{K} - (\nu_E + \mu_E) & 0 & 0 \\
0 & - \mu_M & 0 \\
0 & 0 & - \mu_F
\end{pmatrix}
\]
and
\[
M_0 = 
\begin{pmatrix}
0 & 0 & b (1 - \f{E}{K}) \\
(1 - r) \nu_E & 0 & 0 \\
\f{r \nu_E M}{M+\gamma_i M_i} ( 1 - e^{-\beta (M + \gamma_i M_i)}) & \f{r \nu_E E}{M+\gamma_i M_i} \big(\beta M e^{-\beta (M + \gamma_i M_i)} + \f{\gamma_i M_i}{M + \gamma_i M_i} (1 - e^{-\beta (M + \gamma_i M_i)}) \big) & 0
\end{pmatrix}
\]
so that for some $X_1, X_2 \in \RR$ (which we compute below at steady states) we have
\[
- N_0^{-1} M_0 =
\begin{pmatrix}
0 & 0 & \f{b (1 - \f{E}{K})}{b \f{F}{K} + \nu_E + \mu_E} \\
\f{(1-r)\nu_E}{\mu_M} & 0 & 0 \\
\f{X_1}{\mu_F} & \f{X_2}{\mu_F} & 0
\end{pmatrix}.
\]
At the steady state $(0, 0, 0)$, we have directly unconditional stability as 
\[
J =
\begin{pmatrix}
- (\nu_E + \mu_E) & 0 & b \\
(1 - r) \nu_E & -\mu_M & 0 \\
0 & 0 & - \mu_F
\end{pmatrix},
\]
whose eigenvalues are $-(\nu_E + \mu_E)$, $-\mu_M$ and $-\mu_F$.

At a non-zero steady state we recall that
\begin{align*}
b F &= \f{(\nu_E + \mu_E) E}{1-\f{E}{K}}, \\
E &= \lambda K M, \\
r \nu_E (1 - e^{-\beta (M + \gamma_i M_i)})\f{M}{M+\gamma_i M_i} &= \mu_F \f{F}{E} = \f{\mu_F (\nu_E + \mu_E)}{b} \f{1}{1-\lambda M}, \\
e^{-\beta (M + \gamma_i M_i)} &= 1 - \f{1}{\mathcal{N} ( 1 - \lambda M) } \f{M + \gamma_i M_i}{M},
\end{align*}
so that
\begin{align*}
X_1 &= \f{r \nu_E}{\mathcal{N}(1-\lambda M)}, \\
X_2 &=  \f{r \nu_E \lambda K M}{M + \gamma_i M_i} \Big(  \beta M \big(  1 - \f{M+ \gamma_i M_i}{\mathcal{N}M ( 1 - \lambda M)} \big) +  \f{\gamma_i M_i}{M} \f{1}{\mathcal{N} ( 1 - \lambda M)} \Big).
\end{align*}
The characteristic polynomial of $-N_0^{-1} M_0$ is
\[
P(z) = - z^3 + \f{b ( 1 - \lambda M)^2}{\nu_E+\mu_E}  \Big( \f{(1-r)\nu_E X_2}{\mu_M \mu_F} + z \f{X_1}{\mu_F} \Big),
\]
which is equal to
\[
P(z) = - z^3 + \mathcal{N} ( 1 - \lambda M )^2 \Big( \f{M}{M+\gamma_i M_i} \big( \beta M (1 - \f{M+\gamma_i M_i}{\mathcal{N}M (1 - \lambda M)}) + \f{\gamma_i M_i}{M \mathcal{N} (1 - \lambda M)}\big) + \f{z}{\mathcal{N} ( 1 - \lambda M)} \Big),
\]
and we rewrite it as
\[
P(z) = -z^3 + (1 - \lambda M) \Big(\beta \mathcal{N} \f{M^2 (1 - \lambda M)}{M+\gamma_i M_i} - \beta M + \f{\gamma_i M_i}{M+\gamma_i M_i} + z \Big)
\]
We find $P(0) > 0$ (since $X_2 > 0$) and 
\[
P'(z) = - 3 z^2 + (1 - \lambda M),
\]
so that $J$ is stable if and only if $P(1) < 0$. ($P$ is increasing and then decreasing on $(0, + \infty)$). This condition reads
\beq
(1 - \lambda M) \Big(1 + \f{\gamma_i M_i}{M+\gamma_i M_i} + \beta M \big( -1 + \mathcal{N} \f{M}{M+\gamma_i M_i} (1 - \lambda M) \big) \Big) < 1.
\label{cond:stab}
\eeq

Let us treat first the case when $M_i = 0$. The stability condition rewrites
\[
(1 - \lambda M) \big(1 + \beta M (-1 + \mathcal{N} (1 - \lambda M)) \big) < 1,
\]
that is, for a nonzero steady state,
\[
-\lambda + \beta (-1 + \mathcal{N} (1 - \lambda M)) - \lambda \beta M (-1 + \mathcal{N}(1 - \lambda M)) < 0.
\]

If $M_i^{\text{crit}} > 0$, we know that there are exactly two steady states between $0$ and $1/\lambda$ for $M_i = 0$, which we denote by $0 < M_- < M_+ < 1/\lambda$. Let $\phi(x) = 1 - \f{1}{\mathcal{N}} - \lambda x + e^{-\beta x} (\lambda x - 1)$. We have $\phi(M_{\pm}) = 0$ and $\mp \phi'(M_{\pm})) > 0$.

In particular, $\phi' (M_+) > 0$ so
\[
M_+ > \f{1}{\lambda} + \f{1}{\beta} (1 - e^{\beta M_+}) = \f{1}{\lambda} - \f{1}{\beta} \f{1}{(1-\lambda M_+) \mathcal{N} - 1}.
\]
Multiplying this inequality by $\lambda \beta \big( (1 -\lambda M_+) \mathcal{N} - 1 \big)$ yields exactly the stability of $M_+$, since $(1 -\lambda M_+) \mathcal{N} > 1$. Indeed,
\[
\mathcal{N} (1 - \lambda M_{\pm}) = \f{e^{\beta M_{\pm}}}{e^{\beta M_{\pm}} -1} > 1. 
\]
By a similar computation one can show that the smaller steady state $M_-$ is unstable.

We move now to the general case $M_i \geq 0$, assume $M_i < M_i^{\text{crit}}$ and write that $\p_x f < 0$ (which was proved to hold at the bigger steady state) is equivalent to
\[
(1 - 2 \lambda M) (1 - e^{-\beta (M + \gamma_i M_i)}) + \beta M (1 - \lambda M)e^{-\beta (M + \gamma_i M_i)} < \f{1}{\mathcal{N}}.
\]
Using as before the fact that $M$ is a steady state allows us to rewrite this last inequality as
\[
(1-2 \lambda M) \f{1}{\mathcal{N}} \f{M+\gamma_i M_i}{M(1-\lambda M)} + \beta M (1 - \lambda M) \big(1 - \f{M+ \gamma_i M_i}{\mathcal{N} (1-\lambda M) M} \big) < \f{1}{\mathcal{N}}.
\]
Multiplying this inequality by $\mathcal{N} (1 - \lambda M)\f{M}{M+\gamma_i M_i}$ yields
\[
(1-2 \lambda M)  + \beta (1 -\lambda M) \big(\mathcal{N}M^2 \f{1 - \lambda M}{M+\gamma_i M_i}- M  \big) < (1 - \lambda M)\f{M}{M+\gamma_i M_i},
\]
that is
\[
(1 - \lambda M) \Big(2 - \f{M}{M+\gamma_i M_i} + \beta M \big( - 1 + \mathcal{N} M \f{1-\lambda M}{M + \gamma_i M_i} \big) \Big) < 1,
\]
whence the stability of the bigger steady state, since we recover \eqref{cond:stab}.
Likewise, at the smaller steady state we have $\p_x f > 0$, and the reverse inequality holds. This concludes the proof.

\section{Basin entrance time approximation}
\label{sec:entrancetime}

\subsection{Bounds on the wild equilibria}

For $M_i = 0$, under the assumptions of Lemma \ref{lem:steadystates} such that there are two positive steady states $\EE_- \ll \EE_+$ for \eqref{sys:simplifie}, we get explicit bounds on these states. In particular, we assume $\calN > 4 \psi$.
We recall that the positive equilibria can be expressed as an increasing function of their second coordinate $M \in (0, 1/\lambda)$:
\[
 \EE(M) := \begin{pmatrix} 
            K \lambda M
            \\
            M
            \\
            \f{\nu_E + \mu_E}{b} \f{\lambda M}{(1- \lambda M)}
           \end{pmatrix},
\]
and $\EE(M)$ is an equilibrium if and only if $f (\beta M) = 0$, where
\begin{equation}
 f(x) = (1 - \psi x)(1 - e^{-x}) - \f{1}{\calN}.
 \label{def:fMi0}
\end{equation}

\begin{lemma}
 The function $f$ (defined in \eqref{def:fMi0}) is concave on $[0, 1/\psi]$. It reaches its maximum value on this interval at $Z(\psi) \in (0, \f{1}{2\psi})$, where we define
 \begin{equation}
  e^{-Z(\psi)} = \f{\psi}{1+\psi - \psi Z(\psi)}, \quad F(\psi) := \f{1+\psi-\psi Z(\psi)}{(1 - \psi Z(\psi))^2}.
  \label{def:Fpsi}
 \end{equation}
 Then $f$ on $[0, 1/\psi]$ has no zero if $\calN < F(\psi)$, exactly $1$ zero if $\calN = F(\psi)$ and exactly $2$ zeros if $\calN > F(\psi)$.
 
 In addition, $Z$ and $F$ have the following asymptotics:
 \[
  Z(\psi) \sim_{\psi \to +\infty} \f{1}{2 \psi}, \quad Z(\psi ) \sim_{\psi \to 0} \log\big( \f{1}{\psi} \big), \quad F(\psi) \sim_{\psi \to +\infty} 4 \psi, \quad F \xrightarrow[\psi \to 0]{} 1.
 \]
 \label{lem:ZF}
 \end{lemma}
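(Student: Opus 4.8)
The plan is to reduce everything to a computation of the first two derivatives of $f$. First I would compute
\[
f'(x) = -\psi + e^{-x}(1 + \psi - \psi x), \qquad f''(x) = e^{-x}(\psi x - 1 - 2\psi),
\]
from which $f''(x) < 0$ for all $x < \tfrac{1}{\psi} + 2$, hence on all of $[0, 1/\psi]$, giving strict concavity. Since $f'(0) = 1 > 0$ and $f'(1/\psi) = \psi(e^{-1/\psi} - 1) < 0$, the strictly decreasing function $f'$ has a unique zero $Z = Z(\psi) \in (0, 1/\psi)$, which is therefore the location of the maximum of $f$ on $[0, 1/\psi]$; rewriting $f'(Z) = 0$ as $e^{-Z} = \frac{\psi}{1 + \psi - \psi Z}$ gives the defining relation (the denominator being positive since $Z < 1/\psi < 1 + 1/\psi$). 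To get $Z < \frac{1}{2\psi}$ I would evaluate $f'$ at $\frac{1}{2\psi}$: with $t := \frac{1}{2\psi}$ the inequality $f'(\tfrac{1}{2\psi}) < 0$ is equivalent to $(1+t)e^{-t} < 1$, i.e. $1 + t < e^{t}$, which holds for every $t > 0$; since $f'$ is decreasing this forces $Z < \frac{1}{2\psi}$.

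Next I would compute the maximal value. Substituting $e^{-Z} = \frac{\psi}{1+\psi-\psi Z}$, hence $1 - e^{-Z} = \frac{1-\psi Z}{1+\psi-\psi Z}$, into $f(Z) = (1-\psi Z)(1-e^{-Z}) - \frac{1}{\mathcal{N}}$ yields
\[
f(Z) = \frac{(1 - \psi Z)^2}{1 + \psi - \psi Z} - \frac{1}{\mathcal{N}} = \frac{1}{\mathcal{N}}\Big(\frac{\mathcal{N}}{F(\psi)} - 1\Big),
\]
so $f(Z)$ has the sign of $\mathcal{N} - F(\psi)$. Since $f(0) = f(1/\psi) = -\frac{1}{\mathcal{N}} < 0$ and $f$ is strictly concave on $[0, 1/\psi]$, the zero count follows from the intermediate value theorem: no zero if $f(Z) < 0$; the single point $Z$ if $f(Z) = 0$; and exactly one zero in each of $(0, Z)$ and $(Z, 1/\psi)$ if $f(Z) > 0$.

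For the asymptotics I would use that $\psi Z \in (0, 1/2)$ for all $\psi > 0$. As $\psi \to +\infty$, setting $w := \psi Z$ the relation becomes $e^{-w/\psi} = (1 + \tfrac{1-w}{\psi})^{-1}$; expanding both sides to first order in $1/\psi$ gives $2w - 1 = O(1/\psi)$, so $w \to \tfrac{1}{2}$, whence $Z \sim \frac{1}{2\psi}$ and $F(\psi) = \frac{1+\psi-w}{(1-w)^2} \sim 4\psi$. As $\psi \to 0^+$, the relation forces $Z \to +\infty$ (otherwise the right-hand side tends to $0$ while $e^{-Z}$ stays bounded below); writing $Z = \log(1/\psi) + \delta$ and using $\psi\log(1/\psi) \to 0$ (so that $1+\psi-\psi Z \to 1$) gives $\psi e^{-\delta} = \psi(1+o(1))$, hence $\delta \to 0$, $Z \sim \log(1/\psi)$, $\psi Z \to 0$ and $F(\psi) \to 1$. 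The only mildly delicate point is the bootstrapping in these two expansions — confirming the limiting value of $\psi Z$ before using it — but the uniform bound $\psi Z < \tfrac{1}{2}$ together with $\psi\log(1/\psi)\to 0$ turns each step into a one-line estimate, so I do not expect a genuine obstacle here.
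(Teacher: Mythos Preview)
Your proof is correct and follows essentially the same route as the paper: compute $f'$ and $f''$, deduce strict concavity on $[0,1/\psi]$, locate the unique critical point $Z(\psi)$, show $Z<\tfrac{1}{2\psi}$ via $e^t>1+t$, read off the sign of $f(Z)$ as that of $\mathcal{N}-F(\psi)$, and conclude the zero count from concavity and the negative endpoint values. The paper leaves the asymptotics to ``straightforward computations'', whereas you actually carry them out; your bootstrapping using the a priori bound $\psi Z\in(0,\tfrac12)$ is exactly what is needed and fills in that omitted detail.
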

\begin{proof}
 We compute
 \[
  f'(x) = e^{-x} \big( 1 + \psi - \psi x \big) - \psi, \quad f''(x) = e^{-x} \big( \psi x - 1 - 2 \psi \big),
 \]
 hence $f'' < 0$ on $[0, 1/\psi]$. Since $f(0) = f(1/\psi) = - 1/\calN < 0$, $f$ reaches a unique maximum at the (necessarily unique) point $Z(\psi) \in (0, 1/\psi)$ such that $f'(Z(\psi)) = 0$.  
 The claim that $Z(\psi) < 1/(2 \psi)$ follows from the inequality $e^x > 1+x$, which implies that
 \[
  \f{1}{\psi} f'(\f{1}{2 \psi}) = e^{-1/2\psi} \big( 1 + \f{1}{2 \psi} \big) - 1 < 0.
 \]
 Moreover, the sign of $f(Z(\psi))$ is exactly that of $\calN - F(\psi)$.
 The equivalents and limit follow from straightforward computations.
\end{proof}
\begin{remark}
We notice that $Z$ is related to a well-known special function: let us introduce the (principal branch of the) special Lambert $W$ function, that is:
\[
 W(y) = z, \, z \geq - 1 \iff z e^{z} = y.
\]
Since if $y > 1$ then $z > 0$, we obtain
\[
 Z(\psi) = \log \big( W (e^{1+1/\psi}) \big).
\]
\end{remark}

Assume $\calN > F(\psi)$ (defined in \eqref{def:Fpsi}), and denote by $x_- < x_+$ the two positive zeros of $f$.

\begin{lemma}
We have $x_- > 1/\calN$. 
 \[
  \f{1}{\calN} < x_- < \f{1}{\psi} \big( 1 - \f{\kappa^*}{\calN} \big) < Z(\psi) < \f{1}{\psi} \big( 1 - \f{\kappa_*}{\calN} \big) < x_+,
 \]
 where
 \[
 \kappa_* = 1 + \f{\psi}{1-\psi Z (\psi)}, \quad \kappa^* = \calN - \f{\psi Z(\psi) (1 + \psi - \psi Z (\psi))}{(1 - \psi Z(\psi))^2}.
\]
If in addition $\calN > 2$ then $x_+ < \f{1}{\psi} \big(1 - \f{1}{\calN} \big)$.
\label{lem:boundsx}
\end{lemma}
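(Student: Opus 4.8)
The plan is to derive every inequality from the strict concavity of $f$ on $[0,1/\psi]$ proved in Lemma~\ref{lem:ZF}, together with the data $f(0)=f(1/\psi)=-1/\mathcal{N}<0$, $f'(0)=e^{0}(1+\psi)-\psi=1$, and the location $Z=Z(\psi)\in(0,1/(2\psi))$ of the maximizer. The guiding principle is that each bound is a sign test: for $a\in(0,Z)$ one has $f(a)<0\Rightarrow a<x_-$ and $f(a)>0\Rightarrow x_-<a$, while for $a\in(Z,1/\psi)$ the roles of $x_+$ are reversed. A single preliminary computation feeds everything: since $e^{-Z}=\psi/(1+\psi-\psi Z)$ one gets $1-e^{-Z}=(1-\psi Z)/(1+\psi-\psi Z)$, hence $f(Z)=(1-\psi Z)^2/(1+\psi-\psi Z)-1/\mathcal{N}=1/F(\psi)-1/\mathcal{N}$, which is positive precisely because the standing hypothesis of Lemma~\ref{lem:ZF} is $\mathcal{N}>F(\psi)$.

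For $x_->1/\mathcal{N}$ I would use the tangent line at $0$: concavity gives $f(x)\le x-1/\mathcal{N}$ on $[0,1/\psi]$, so if $x_-\le1/\mathcal{N}$ then $0=f(x_-)\le x_--1/\mathcal{N}\le0$, which forces equality everywhere, hence $x_-=0$ by strict concavity — a contradiction. For the upper bound on $x_-$ I would first record the identity $\tfrac1\psi\bigl(1-\tfrac{\kappa^*}{\mathcal{N}}\bigr)=ZF(\psi)/\mathcal{N}$ (substitute the definition of $\kappa^*$ and use $F(\psi)=(1+\psi-\psi Z)/(1-\psi Z)^2$), and then compare $f$ with the chord through $(0,-1/\mathcal{N})$ and $(Z,1/F-1/\mathcal{N})$: strict concavity puts $f$ strictly above that chord on $(0,Z)$, the chord has slope $1/(ZF)$ and so vanishes at $ZF/\mathcal{N}<Z$, whence $f>0$ on $[ZF/\mathcal{N},Z)$ and therefore $x_-<ZF/\mathcal{N}=\tfrac1\psi\bigl(1-\tfrac{\kappa^*}{\mathcal{N}}\bigr)$. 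The two middle inequalities $\tfrac1\psi\bigl(1-\tfrac{\kappa^*}{\mathcal{N}}\bigr)<Z<\tfrac1\psi\bigl(1-\tfrac{\kappa_*}{\mathcal{N}}\bigr)$ then collapse, after the companion simplification $\tfrac1\psi\bigl(1-\tfrac{\kappa_*}{\mathcal{N}}\bigr)=\tfrac1\psi-\dfrac{1+\psi-\psi Z}{\psi\mathcal{N}(1-\psi Z)}$, to the single statement $\mathcal{N}>F(\psi)$.

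The bound $x_+>\tfrac1\psi\bigl(1-\tfrac{\kappa_*}{\mathcal{N}}\bigr)$ is symmetric: compare $f$ with the chord through $(Z,1/F-1/\mathcal{N})$ and $(1/\psi,-1/\mathcal{N})$; strict concavity puts $f$ strictly above it on $(Z,1/\psi)$, the chord crosses zero at $x^{**}=\tfrac1\psi-\dfrac{(1-\psi Z)F(\psi)}{\psi\mathcal{N}}$, and using $(1-\psi Z)F(\psi)=(1+\psi-\psi Z)/(1-\psi Z)$ one identifies $x^{**}$ with $\tfrac1\psi\bigl(1-\tfrac{\kappa_*}{\mathcal{N}}\bigr)$. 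Since the previous step shows $x^{**}>Z$ and $f(x^{**})>0$, the point $x^{**}$ lies in $(x_-,x_+)$, so $x_+>x^{**}$. Finally, when $\mathcal{N}>2$, put $b:=\tfrac1\psi\bigl(1-\tfrac1{\mathcal{N}}\bigr)$; a direct evaluation gives $f(b)=\tfrac1{\mathcal{N}}(1-e^{-b})-\tfrac1{\mathcal{N}}=-e^{-b}/\mathcal{N}<0$, and $b\in(Z,1/\psi)$ because $b<1/\psi$ is clear while $b>Z$ is equivalent to $\mathcal{N}(1-\psi Z)>1$, which holds since $1-\psi Z>1/2$ (from $Z<1/(2\psi)$) and $\mathcal{N}>2$; hence $b\notin(x_-,x_+)$ and, lying to the right of $Z>x_-$, it satisfies $b>x_+$.

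I expect the only real friction to be the two bookkeeping identities $\tfrac1\psi\bigl(1-\tfrac{\kappa^*}{\mathcal{N}}\bigr)=ZF(\psi)/\mathcal{N}$ and $x^{**}=\tfrac1\psi\bigl(1-\tfrac{\kappa_*}{\mathcal{N}}\bigr)$, which are exactly what fix the precise shape of the constants $\kappa_*,\kappa^*$; once these are in hand, every inequality is a one-line consequence of concavity (tangent above, chords below) and of the standing assumption $\mathcal{N}>F(\psi)$ (with the extra $\mathcal{N}>2$ used only to place $b$ to the right of $Z$).
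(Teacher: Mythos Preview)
Your proof is correct and follows essentially the same strategy as the paper: exploit the strict concavity of $f$ on $[0,1/\psi]$ to bound its two zeros via chords through the maximizer $(Z,f(Z))$, after checking $f(Z)=1/F(\psi)-1/\mathcal N>0$. The paper does the chord construction after the affine change of variables $\kappa=\mathcal N(1-\psi x)$, working with $H(\kappa)=\mathcal N f\bigl(\tfrac{1}{\psi}(1-\tfrac{\kappa}{\mathcal N})\bigr)$ (concave, equal to $-1$ at both endpoints, maximal at $\widehat\kappa=\mathcal N(1-\psi Z)$); your computation directly in $x$ is the same argument in different coordinates, and your bookkeeping identities are exactly what translate between the two. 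For the first inequality $x_->1/\mathcal N$ you use the tangent line at $0$ ($f'(0)=1$), which is a slightly cleaner envelope than the paper's quadratic bound $f(x)\le x(1-\psi x)-1/\mathcal N$ coming from $1-e^{-x}\le x$; both give the same conclusion with no extra work. For the final inequality you make explicit the evaluation $f(b)=-e^{-b}/\mathcal N<0$ that the paper leaves implicit.
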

\begin{proof}
 The first inequality is obtained by using the inequalities $1 - e^{-x} \leq x$ and $1 - \sqrt{1-x} > x / 2$ for $x \in (0, 1)$.
 The first one implies that $f(x) \leq x (1 - \psi x) - 1/\calN$, which is a second order polynomial equal to $f$ at $0$ and at $1/\psi$, with roots located at $\big(1\pm \sqrt{1-4\psi/\calN}\big)/(2\psi)$ (recall that we have $\calN > 4 \psi$).
 Hence $x_- > \big(1 - \sqrt{1-4\psi/\calN}\big)/(2\psi) > 1/\calN$ by the second inequality.
 
 The upper bound on $x_+$ comes from the fact that if $\calN > 2$ then by Lemma \ref{lem:ZF}
 \[
  \big( 1 - \f{1}{\calN} \big) \f{1}{\psi} > \f{1}{2 \psi} > Z(\psi).
 \]

 Finally to get the two other bounds, we introduce
 \[
  H(\kappa) := f \big( \f{1}{\psi} ( 1 - \f{\kappa}{\calN}) \big) = \kappa \big( 1 - e^{-\f{1}{\psi} (1 - \f{\kappa}{\calN})} \big) - 1.
 \]
 By Lemma \ref{lem:ZF}, it is concave on $[0, \calN]$, equal to $-1$ at $0$ and $\calN$ and reaches its maximum at $\widehat{\kappa} := \calN ( 1 - \psi Z(\psi))$.
 To get $\kappa_*$ and $\kappa^*$, we simply use the fact that the graph of $H$ is above the segments from $(0, -1)$ to $(\widehat{\kappa}, H(\widehat{\kappa}))$ on the first hand, and from $(\widehat{\kappa}, H(\widehat{\kappa}))$ to $(\calN, 0)$ on the other hand, so that we define
 \[
  -1 + \f{H(\widehat{\kappa})+1}{\widehat{\kappa}} \kappa_* = 0 = -1 - (\kappa^* - \calN) \f{H(\widehat{\kappa}) + 1}{\calN - \widehat{\kappa}},
 \]
 and the expressions of $\kappa_* < \widehat{\kappa} < \kappa^*$ follow from a straightforward computation.

\end{proof}

Back to the steady states of \eqref{sys:simplifie}, we deduce from Lemma \ref{lem:boundsx} the following bounds, assuming $\calN > 2$:
\begin{align}
 \undest{\EE}_- := \begin{pmatrix}
             \frac{\lambda K }{\mathcal{N}\beta} \\
             \frac{1}{\mathcal{N} \beta} \\
             \frac{\nu_E+\mu_E}{b} \frac{\lambda K}{\mathcal{N} \beta} 
            \end{pmatrix} \leq 
           & \EE_- \leq \big( 1 - \frac{\kappa^*}{\calN} \big) \begin{pmatrix}
             K  \\
             \frac{1}{\lambda} \\
             \frac{\nu_E+\mu_E}{b} \frac{K \calN }{\kappa^*}
            \end{pmatrix} =: \ovest{\EE}_- 
            \label{estimations:EE-}
            \\
            \undest{\EE}_+ := \big(1 - \frac{\kappa_*}{\calN} \big)\begin{pmatrix}
             K  \\
             \frac{1}{\lambda} \\
             \frac{\nu_E+\mu_E}{b} \frac{K \calN}{\kappa_*}
            \end{pmatrix} \leq & \EE_+ \leq \big( 1 - \frac{1}{\mathcal{N}} \big) \begin{pmatrix} K \\ \frac{1}{\lambda} \\ \frac{K \mathcal{N} (\nu_E + \mu_E)}{b} \end{pmatrix} =: \ovest{\EE}_+. 
            \label{estimations:EE+}
\end{align}

\subsection{Results}

\paragraph{A lower bound.} First, we give a lower bound on the entrance times. We consider the fact that for a solution to \eqref{sys:simplifie} with initial data given by $\EE_+$, thanks to the overestimation in \eqref{estimations:EE+},
\[
 F(t) \geq \undest{F}_+ e^{-\mu_F t} =: \undest{F}_{\flat} (t).
\]
This implies 
\[
 E(t) \geq e^{-(\nu_E + \mu_E) t - \frac{b \undest{F}_+}{K} ( 1 - e^{-\mu_F t}) } \undest{E}_+ + b \undest{F}_+ \int_0^t e^{-\mu_F t'} e^{-(\nu_E+\mu_E)(t-t')} e^{-\frac{b \undest{F}_+}{K} ( e^{-\mu_F t'} - e^{-\mu_F t})} dt' =: \undest{E}_{\flat}(t),
\]
and
\[
 M(t) \geq e^{-\mu_M t} \undest{M}_+ + (1 - r) \nu_E \int_0^t e^{-\mu_M ((t - t')} \undest{E}_{\flat}(t') dt' =: \undest{M}_{\flat}(t).
\]
Using the underestimation of $\EE_-$ from \eqref{estimations:EE-}, we define $t^{Z}_{\flat} := \min \{ t \geq 0, \quad \undest{Z}_{\flat}(t) \leq \ovest{Z}_- \}$ for $Z \in \{ E, M, F \}$.
\begin{lemma}
 We have the following lower bound: $\tau(M_i) \geq \min \big( t^E_{\flat}, t^M_{\flat}, t^F_{\flat} \big)$.
\end{lemma}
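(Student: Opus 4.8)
The plan is to build a coordinatewise subsolution of the trajectory issued from $\EE_+(M_i=0)$ that is valid \emph{no matter what} $M_i$ is, and then to observe that, as long as this subsolution stays above $\EE_-$, the true trajectory still lies in the basin of attraction $\Sigma_+$ of $\EE_+$ and therefore cannot yet have entered $\Sigma_-(M_i=0)$. (The standing hypothesis of this section, namely $\calN > 4\psi$ with two positive equilibria at $M_i=0$, guarantees that $M_i^{\text{crit}}>0$ and hence that the bistable picture of Proposition \ref{prop:separatrice} holds at $M_i=0$.)

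First I would justify the three differential inequalities behind $\undest{F}_\flat,\undest{E}_\flat,\undest{M}_\flat$. The insemination term $r\nu_E E\,(1-e^{-\beta(M+\gamma_i M_i)})\f{M}{M+\gamma_i M_i}$ in \eqref{sys:simplifie} being non-negative, the $F$-equation gives $\dot F\ge-\mu_F F$; since $F(0)=(\EE_+)_3\ge\undest{F}_+$ by \eqref{estimations:EE+}, integrating yields $F(t)\ge\undest{F}_+e^{-\mu_F t}=\undest{F}_\flat(t)$. Next, by Lemma \ref{lem:monotone} the set $\{E\le K\}$ is forward invariant and contains $\EE_+(M_i=0)$, so $1-E/K\ge 0$ along the trajectory, whence $\dot E\ge b\,\undest{F}_\flat(t)\,(1-E/K)-(\nu_E+\mu_E)E$; comparison with the linear-in-$E$ Cauchy problem solved (via the integrating factor, which produces precisely the stated integral formula) by $\undest{E}_\flat$ — whose solution stays $\le K$ since $\undest{E}_+<K$ and the field points strictly inward at $E=K$ — then gives $E(t)\ge\undest{E}_\flat(t)$, using $E(0)\ge\undest{E}_+$. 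Finally the $M$-equation gives $\dot M\ge(1-r)\nu_E\,\undest{E}_\flat(t)-\mu_M M$, and comparison with the linear Cauchy problem solved by $\undest{M}_\flat$ (with $M(0)\ge\undest{M}_+$) gives $M(t)\ge\undest{M}_\flat(t)$. Observe that $M_i$ enters only through the non-negative term we dropped, which is exactly why the bound is uniform in $M_i$ (and matches the ``$M_i=+\infty$'' heuristic invoked around Table \ref{table:undest}).

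Then I would conclude. Fix $t<\min(t^E_\flat,t^M_\flat,t^F_\flat)$. By the definition of $t^Z_\flat$ one has $\undest{Z}_\flat(t)>\ovest{Z}_-$ for $Z\in\{E,M,F\}$, so combining the subsolution estimates with \eqref{estimations:EE-}, which gives $\ovest{\EE}_-\ge\EE_-$ coordinatewise, we get $(E,M,F)(t)\gg\EE_-$, in particular $(E,M,F)(t)>\EE_-$. Since $\EE_-\in\Sigma$ by Proposition \ref{prop:separatrice}, this forces $(E,M,F)(t)\in\Sigma_+$, the basin of attraction of $\EE_+$ for \eqref{sys:simplifie} at $M_i=0$; and $\Sigma_+$ being disjoint from $\Sigma_-(M_i=0)$, we conclude $(E,M,F)(t)\notin\Sigma_-(M_i=0)$. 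Hence entrance into $\Sigma_-(M_i=0)$ cannot occur before $\min(t^E_\flat,t^M_\flat,t^F_\flat)$, i.e. $\tau(M_i)\ge\min(t^E_\flat,t^M_\flat,t^F_\flat)$.

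The main work, and the only place to be careful, is bookkeeping rather than analysis: checking that $\undest{E}_\flat$ really stays below $K$ (so the comparison for $E$ holds on all of $\RR_+$), and checking that the crude upper bounds $\ovest{Z}_-$ on the coordinates of $\EE_-$ lie below the starting values $\undest{Z}_+$, so that the $t^Z_\flat$ are positive and the statement is not vacuous — which follows from $\kappa_*<\widehat{\kappa}<\kappa^*$ (and $\calN>2$) in Lemma \ref{lem:boundsx}. No genuine analytic obstacle arises beyond these verifications.
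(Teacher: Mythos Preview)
Your proof is correct and follows exactly the approach that the paper leaves implicit: the paragraph preceding the lemma constructs the sub-solutions $\undest{F}_\flat,\undest{E}_\flat,\undest{M}_\flat$ and defines the $t^Z_\flat$, then states the lemma without further argument. You have supplied the missing justification—why the comparison inequalities hold (including the check that $\undest{E}_\flat\le K$ so the comparison for $E$ is valid), and why being strictly above $\ovest{\EE}_-\ge\EE_-$ places the trajectory in $\Sigma_+(M_i=0)$ via Proposition~\ref{prop:separatrice}—which is precisely the intended reasoning.
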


Explicitly we find, with $Z = Z(\psi)$ and $Z_0 = 1 + \psi - \psi Z$:
\[
 t^F_{\flat} = \f{1}{\mu_F} \log \big( \f{\kappa^* (\calN - \kappa_*)}{\kappa_* (\calN - \kappa^*}\big) = \f{1}{\mu_F} \log \big( 1 + \f{\calN^2 (1 - \psi Z)^3}{\psi Z Z_0^2} - \f{\calN (1 - \psi Z)}{\psi Z Z_0} \Big).
\]
However it must be expected that $\min(t^E_{\flat}, t^M_{\flat}) > t^F_{\flat}$, and we can give explicit approximations of $t^E_{\flat}$ and $t^M_{\flat}$.

\paragraph{A first upper bound.} 
We compare the solution of \eqref{sys:simplifie} with the solution of the linear system
 \beq
 \bepa
 \df{d E_e}{dt} = b F_e - (\nu_E + \mu_E) E_e,
\\[10pt]
\df{d M_e}{dt} = (1-r) \nu_E E_e - \mu_M M_e,
\\[10pt]
\df{dF_e}{dt} = r \nu_E \epsilon(M_i) E_e - \mu_F F_e,
 \label{sys:estimate}
 \eepa
 \eeq
 where $\epsilon(M_i) = \max_{t \geq 0} \frac{M(t)}{M(t) + M_i} < 1$, typically $\epsilon(M_i) = \frac{M^*}{M^* + M_i}$.
 The following property follows from the fact that \eqref{sys:simplifie} is cooperative:
 \begin{lemma}
 Solutions of \eqref{sys:simplifie} and \eqref{sys:estimate} with initial data such that $(E^0, M^0, F^0) \leq (E_e^0, M_e^0, F_e^0)$ satisfy:
 \[
 \forall t \geq 0, \, (E(t), M(t), F(t)) \leq (E_e (t), M_e (t), F_e(t)).
 \]
 \label{lem:sysestimate}
 \end{lemma}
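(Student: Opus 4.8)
The plan is to invoke the standard comparison principle for cooperative (quasimonotone) ODE systems, for which the key hypothesis to verify is that the right-hand side of \eqref{sys:estimate}, viewed as a vector field dominating that of \eqref{sys:simplifie}, satisfies the appropriate off-diagonal monotonicity and pointwise ordering. Concretely, write the right-hand side of \eqref{sys:simplifie} as $G(E,M,F) = (G_1,G_2,G_3)$ and that of \eqref{sys:estimate} as $G^e(E_e,M_e,F_e) = (G^e_1,G^e_2,G^e_3)$. First I would observe that $G^e$ is cooperative: its Jacobian is the constant Metzler matrix with off-diagonal entries $b \ge 0$ (in the $E$-row), $(1-r)\nu_E \ge 0$ (in the $M$-row) and $r\nu_E \epsilon(M_i) \ge 0$ (in the $F$-row), all non-negative. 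This is exactly the quasimonotonicity condition needed for the comparison theorem (see \cite{Smith1995}).

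Next I would check the domination $G \le G^e$ componentwise on the relevant region, when both are evaluated at the same point $(E,M,F)$ with $E \le K$. For the first component, $G_1 = bF(1 - E/K) - (\nu_E+\mu_E)E \le bF - (\nu_E+\mu_E)E = G^e_1$ because $0 \le 1 - E/K \le 1$ on $\mathcal{E}_3$ and $F \ge 0$. The second components coincide. For the third component, the factor $(1 - e^{-\beta(M+\gamma_i M_i)})\frac{M}{M+\gamma_i M_i}$ is bounded above by $\frac{M}{M+\gamma_i M_i} \le \max_{t\ge 0}\frac{M(t)}{M(t)+M_i} =: \epsilon(M_i)$ — here I would be slightly careful that the bound $\epsilon(M_i)$ is defined along the actual trajectory $M(\cdot)$ of \eqref{sys:simplifie}, so the inequality $G_3(E,M,F) \le r\nu_E\epsilon(M_i)E = G^e_3$ holds along that trajectory rather than pointwise on all of $\mathbb{R}_+^3$; that is enough for the comparison argument. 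Since $\gamma_i M_i \ge 0$ we also have $1 - e^{-\beta(M+\gamma_i M_i)} \le 1$, which gives the cruder pointwise bound if one prefers to avoid trajectory-dependence.

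Then I would conclude by the differential-inequality form of the comparison principle: since $(E,M,F)$ satisfies $\dot X = G(X) \le G^e(X)$, $G^e$ is quasimonotone, and the initial data satisfy $X(0) \le X_e(0)$, it follows that $X(t) \le X_e(t)$ for all $t \ge 0$ (on the common interval of existence, which is $\mathbb{R}_+$ here since \eqref{sys:estimate} is linear and \eqref{sys:simplifie} is forward-bounded by Lemma~1). The main obstacle — really the only subtle point — is the treatment of the $\epsilon(M_i)$ bound in the $F$-equation: one must either accept a trajectory-dependent comparison (which is what is actually used downstream, since $\epsilon(M_i)$ is typically taken as $M^*/(M^*+M_i)$ using the a priori bound $M(t) \le M^*$ on the decreasing trajectory from $\EE_+$), or replace $\epsilon(M_i)$ by $1$ and lose sharpness. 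Everything else is a routine verification of quasimonotonicity and componentwise domination.
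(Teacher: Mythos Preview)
Your proposal is correct and follows essentially the same route as the paper, which simply records that the result ``follows from the fact that \eqref{sys:simplifie} is cooperative'' without spelling out the verification. Your write-up makes explicit exactly the ingredients (quasimonotonicity of the comparison system, componentwise domination $G \le G^e$, and the standard Kamke--M\"uller comparison theorem from \cite{Smith1995}) that the paper leaves implicit, and your remark on the trajectory-dependence of $\epsilon(M_i)$ is accurate and anticipates how the bound is actually used downstream (with $M(t)\le M_+^*$ on the monotone trajectory from $\EE_+$).
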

 We use the under-estimation of $\EE_-$ given by \eqref{estimations:EE-}, to define, for $X = (X^i)_i = (E, M, F)$ and $i \in \{1, 2, 3\}$,
 \[
t^{X^i}_{\min} := \inf \{ t \geq 0, X^i_e(t) \leq [\undest{\EE}_- ]_i \}.  
\]
\begin{lemma}
 For any solution $X_e$ to \eqref{sys:estimate} satisfying the assumption of Lemma \ref{lem:sysestimate}, we have the upper bound on the entrance time: $\tau(M_i) \leq \max \big( t^E_{\min}, t^M_{\min}, t^F_{\min} \big)$.
 \label{lem:firstub}
\end{lemma}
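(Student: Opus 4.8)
The plan is to show that the solution $X = (E, M, F)$ of \eqref{sys:simplifie} with constant sterilizing density $M_i$ and initial datum $\EE_+(M_i = 0)$ satisfies $X(t^*) < \EE_-(M_i = 0)$ for $t^* := \max\big( t^E_{\min}, t^M_{\min}, t^F_{\min}\big)$. Since by Proposition \ref{prop:separatrice} the point $\EE_-(M_i=0)$ lies on $\Sigma(M_i=0)$ and every point strictly below it belongs to $\Sigma_-(M_i=0)$, this yields $X(t^*) \in \Sigma_-(M_i=0)$, hence $\tau(M_i) \le t^*$ by the definition of $\tau(M_i)$.

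The first step is to check that $t \mapsto X(t)$ is non-increasing. At $t = 0$ the first two components of the right-hand side of \eqref{sys:simplifie} vanish, because the defining relations $E_+^* = \lambda K M_+^*$ and $(1-r)\nu_E E_+^* = \mu_M M_+^*$ of $\EE_+(M_i=0)$ do not involve $M_i$; the third component equals
\[
r\nu_E E_+^*\Big( (1 - e^{-\beta(M_+^* + \gamma_i M_i)})\frac{M_+^*}{M_+^* + \gamma_i M_i} - (1 - e^{-\beta M_+^*}) \Big),
\]
where I used the equilibrium identity $\mu_F F_+^* = r\nu_E E_+^*(1-e^{-\beta M_+^*})$. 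Since $x \mapsto (1 - e^{-\beta x})/x$ is decreasing on $(0, +\infty)$ and $M_+^* + \gamma_i M_i > M_+^*$, this quantity is negative. Thus $\dot X(0) \le 0$, and since $\EE_+(M_i=0) \in \mathcal{E}_3 = \{E \le K\}$, which is forward invariant, and \eqref{sys:simplifie} is cooperative on $\mathcal{E}_3$ (Lemma \ref{lem:monotone}), the standard comparison principle for cooperative systems implies that $X$ is non-increasing on $\RR_+$.

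The second step combines this with Lemma \ref{lem:sysestimate}. Applied with $X_e(0) \ge X(0) = \EE_+(M_i=0)$, that lemma gives $X(t) \le X_e(t)$ for all $t \ge 0$. Assume each $t^{X^i}_{\min}$ is finite (this holds in the relevant regime $\epsilon(M_i) < 1/\mathcal{N}$, since then the $(E_e, F_e)$-subsystem of \eqref{sys:estimate} is Hurwitz and $X_e(t) \to 0$; otherwise the claimed bound is vacuous). By continuity of $X^i_e$ the infimum defining $t^{X^i}_{\min}$ is attained, so $X^i(t^{X^i}_{\min}) \le X^i_e(t^{X^i}_{\min}) \le [\undest{\EE}_-]_i$ for $i \in \{1, 2, 3\}$. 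Because $X^i$ is non-increasing, $X^i(t) \le [\undest{\EE}_-]_i$ for every $t \ge t^{X^i}_{\min}$, and in particular for $t = t^* \ge t^{X^i}_{\min}$. Hence $X(t^*) \le \undest{\EE}_-$. Using the bounds \eqref{estimations:EE-} and the strict inequality $x_- > 1/\mathcal{N}$ from Lemma \ref{lem:boundsx} — which gives $[\undest{\EE}_-]_2 = \frac{1}{\mathcal{N}\beta} < M_- = [\EE_-(M_i=0)]_2$, so $X(t^*) \ne \EE_-(M_i=0)$ — we conclude $X(t^*) < \EE_-(M_i=0)$, whence $X(t^*) \in \Sigma_-(M_i=0)$ and $\tau(M_i) \le t^*$.

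The main obstacle is that one cannot argue directly with $X_e$: starting from $\EE_+(M_i=0)$ the component $E_e$ of \eqref{sys:estimate} first increases (its initial derivative $b F_+^* - (\nu_E+\mu_E)E_+^*$ is positive), so $X_e$ need not be monotone and $X_e(t^*) \le \undest{\EE}_-$ may fail. The argument has to be routed through the true solution $X$, which \emph{is} monotone non-increasing; verifying $\dot X(0) \le 0$ — ultimately the monotonicity of $x \mapsto (1 - e^{-\beta x})/x$ — is therefore the crux.
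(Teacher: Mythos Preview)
Your proof is correct. The paper itself gives no explicit proof of this lemma (it simply writes ``Then as before:'' and states the result), so your argument in fact supplies the missing details. The route you take---showing that the \emph{true} solution $X$ is non-increasing because $\EE_+(M_i=0)$ is a super-equilibrium for \eqref{sys:simplifie} with $M_i>0$, and then transferring the pointwise bounds $X_e^i(t^{X^i}_{\min})\le[\undest{\EE}_-]_i$ to $X$ via Lemma~\ref{lem:sysestimate}---is exactly the natural way to make the statement rigorous. Your identification of the crux is also on point: since $\dot E_e(0)=bF_+^*-(\nu_E+\mu_E)E_+^*>0$ (the logistic factor has been dropped), $X_e$ is not monotone and one cannot conclude $X_e(t^*)\le\undest{\EE}_-$ directly; rerouting through the monotone $X$ is what closes the argument. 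The only cosmetic remark is that the non-increase of $X$ can also be seen in one line from the competitive dependence on $M_i$: $G(\EE_+(M_i=0),M_i)\le G(\EE_+(M_i=0),0)=0$, so $\EE_+(M_i=0)$ is a super-equilibrium and the standard fact (e.g.\ Smith, \emph{Monotone Dynamical Systems}) yields $\dot X\le 0$ without computing the sign of the third component by hand.
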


Analytic computations are made in Section \ref{sec:analytic}.

\paragraph{An second upper bound in two steps.}
Let $\rho^* := M_i / \ovest{M}_+$ be the under-estimated effort ratio. When using the above one-step approach, we conclude with a finite upper bound for $\tau(M_i)$ if and only if $\ovest{M}_+ / (M_i + \ovest{M}_+) < 1/\mathcal{N}$, that is
\begin{equation}
 \rho^* > \mathcal{N} - 1.
 \label{cond:onesteprhostar}
\end{equation}

Expanding upon the same idea as for the lower bound, we let $\epsilon = \ovest{M}_+ / (\ovest{M}_+ + M_i)$ so
\[
 F(t) \leq \ovest{F}_+ e^{-\mu_F t} + \ovest{E}_+ r \nu_E \epsilon (1 - e^{-\mu_F t}) =: \ovest{F}_{\sharp}.
\]

Then, we construct the explicit solution $(E,M) = (\ovest{E}_{\sharp}, \ovest{M}_{\sharp})$ to
\[
 \begin{array}{l}
  \dot{E} = b \ovest{F}_{\sharp} - \big( \nu_E + \mu_E + \frac{\ovest{F}_{\sharp}}{K} \big) E, \quad E(0) = \ovest{E}_+,
  \\[10pt]
  \dot{M} = (1 - r) \nu_E E - \mu_M M, \quad M(0) = \ovest{M}_+.
 \end{array}
\]

In details:
\[
 \begin{array}{l}
  \ovest{F}_{\sharp}(t) = \ovest{E}_+ r \nu_E \epsilon + e^{-\mu_F t} \big( \ovest{F}_+ - r\nu_E \epsilon \ovest{E}_+ \big),
  \\[10pt]
  \ovest{E}_{\sharp}(t) = e^{-(\nu_E+\mu_E + \frac{\ovest{E}_+ r \nu_E \epsilon}{K})t - \frac{\ovest{F}_+ - r \nu_E \epsilon \ovest{E}_+}{K \mu_F} (1-e^{-\mu_F t})} \Big( \ovest{E}_+ + \int_0^t \big( b \ovest{E}_+ r \nu_E \epsilon 
  \\[10pt]
  + b e^{-\mu_F t'} ( \ovest{F}_+ - r\nu_E \epsilon \ovest{E}_+ ) \big) e^{(\nu_E+\mu_E + \frac{\ovest{E}_+ r \nu_E \epsilon}{K})t' - \frac{\ovest{F}_+ + r \nu_E \epsilon \ovest{E}_+}{K \mu_F} (1-e^{-\mu_F t'})} dt' \Big),
  \\[10pt]
  \ovest{M}_{\sharp}(t) = e^{-\mu_F t} \ovest{M}_+ + (1-r) \nu_E \int_0^t e^{\mu_F t'} \ovest{E}_{\sharp}(t') dt'.
 \end{array}
\]

We use this super-solution on $[0, t_0]$ (for some $t_0 > 0$ to be determined), and then glue the solution on $[t_0, +\infty)$ of
\[
\left\{
 \begin{array}{l}
  \dot{E} = b F - (\nu_E + \mu_E) E, \quad E(t_0) = \ovest{E}_{\sharp} (t_0),
  \\[10pt]
  \dot{M} = (1- r) \nu_E E - \mu_M M, \quad M(t_0) = \ovest{M}_{\sharp} (t_0),
  \\[10pt]
  \dot{F} = r \nu_E \epsilon_0 E - \mu_F F, \quad F(t_0) = \ovest{F}_{\sharp} (t_0),
 \end{array}
\right.
\]
with $\epsilon_0 = \ovest{M}_{\sharp} (t_0) / (\ovest{M}_{\sharp}(t_0) + M_i) < \epsilon$.

For $Z \in \{E, M, F\}$ we let 
\[
 t^Z_{\sharp} (t_0) := \min \{ t \geq t_0, \ovest{Z}_{\sharp} \leq \undest{Z}_- \}.
\]
Then as before:
\begin{lemma}
 For all $t_0 > 0$, $\tau(M_i) \leq t_{\sharp}(t_0) := \max ( t^E_{\sharp}(t_0), t^M_{\sharp}(t_0), t^F_{\sharp}(t_0) )$.
 \label{lem:secondub}
\end{lemma}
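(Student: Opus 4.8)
The plan is to prove the bound by a two-stage comparison argument: turn the trajectory $X=(E,M,F)$ of \eqref{sys:simplifie} (with the fixed constant $M_i>M_i^{\text{crit}}$ and initial datum $\EE_+(M_i=0)$, as in the definition of $\tau(M_i)$) into a genuine sub-solution of the explicitly solvable systems defining $(\ovest{E}_\sharp,\ovest{M}_\sharp,\ovest{F}_\sharp)$, first on $[0,t_0]$ and then on $[t_0,+\infty)$, and read off an entrance time from the resulting pointwise bounds. Two structural facts about $X$ are needed at the outset. First, by Lemma~\ref{lem:monotone} the set $\{E\le K\}$ is forward invariant, and since the $E$-component of $\EE_+(M_i=0)$ and $\ovest{E}_+=(1-1/\calN)K$ are both $<K$, we get $E(t)\le K$ for all $t\ge0$. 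Second, the vector field of \eqref{sys:simplifie} is non-increasing in $M_i$: its only $M_i$-dependence is through the female equation, whose factor $\frac{M}{M+\gamma_i M_i}\big(1-e^{-\beta(M+\gamma_i M_i)}\big)$ is decreasing in $M_i$ as a consequence of $e^x>1+x$; since $\EE_+(M_i=0)$ annihilates the vector field at $M_i=0$, it is a super-equilibrium for the $M_i$-dynamics, and combined with the cooperativity of \eqref{sys:simplifie} on $\{E\le K\}$ (Lemma~\ref{lem:monotone}) the standard monotone-systems argument shows that $X$ is non-increasing in $t$; in particular $X(t)\le\EE_+(M_i=0)\le\ovest{\EE}_+$ for all $t\ge0$ by \eqref{estimations:EE+}.

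On $[0,t_0]$: since $M(t)\le M(0)\le\ovest{M}_+$, the recruitment factor is bounded by $\epsilon$, so $\dot F\le r\nu_E\epsilon E-\mu_F F\le r\nu_E\epsilon\ovest{E}_+-\mu_F F$ (using $E(t)\le\ovest{E}_+$), and $F(0)\le\ovest{F}_+$ yields $F(t)\le\ovest{F}_\sharp(t)$ by Grönwall. Feeding this into the egg equation, $\dot E=bF(1-\tfrac EK)-(\nu_E+\mu_E)E\le b\ovest{F}_\sharp(1-\tfrac EK)-(\nu_E+\mu_E)E$ — here $E\le K$ is used so that $1-\tfrac EK\ge0$ — and because $b\ge1$ the right-hand side is dominated by $b\ovest{F}_\sharp-(\nu_E+\mu_E+\tfrac{\ovest{F}_\sharp}{K})E$ for every $E\ge0$; comparison with $E(0)\le\ovest{E}_+$ gives $E(t)\le\ovest{E}_\sharp(t)$, and then $\dot M=(1-r)\nu_E E-\mu_M M\le(1-r)\nu_E\ovest{E}_\sharp-\mu_M M$ with $M(0)\le\ovest{M}_+$ gives $M(t)\le\ovest{M}_\sharp(t)$. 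Hence $X(t)\le(\ovest{E}_\sharp,\ovest{M}_\sharp,\ovest{F}_\sharp)(t)$ on $[0,t_0]$. On $[t_0,+\infty)$: monotone decrease of $M$ gives $M(t)\le M(t_0)\le\ovest{M}_\sharp(t_0)$, so the recruitment factor is now bounded by $\epsilon_0$; therefore $X$ satisfies the differential inequalities $\dot E\le bF-(\nu_E+\mu_E)E$ (again via $E\le K$, $F\ge0$), $\dot M=(1-r)\nu_E E-\mu_M M$, $\dot F\le r\nu_E\epsilon_0 E-\mu_F F$, a cooperative (Metzler) system of inequalities with exactly the matrix of the linear system defining $(\ovest{E}_\sharp,\ovest{M}_\sharp,\ovest{F}_\sharp)$ on $[t_0,+\infty)$ and with initial data at $t_0$ dominated by $(\ovest{E}_\sharp,\ovest{M}_\sharp,\ovest{F}_\sharp)(t_0)$ from the first stage; the comparison principle for cooperative systems gives $X(t)\le(\ovest{E}_\sharp,\ovest{M}_\sharp,\ovest{F}_\sharp)(t)$ for all $t\ge t_0$.

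To conclude, for each $Z\in\{E,M,F\}$ we have at $t=t^Z_\sharp(t_0)$ that $X_Z\big(t^Z_\sharp(t_0)\big)\le\ovest{Z}_\sharp\big(t^Z_\sharp(t_0)\big)\le\undest{Z}_-$; since $X$ is non-increasing, $X_Z(t)\le\undest{Z}_-$ for every $t\ge t^Z_\sharp(t_0)$, so at $t_\sharp(t_0)=\max_Z t^Z_\sharp(t_0)$ all three coordinates of $X$ lie below $\undest{\EE}_-$. By \eqref{estimations:EE-}, refined by Lemma~\ref{lem:boundsx} to a strict inequality, $\undest{\EE}_-<\EE_-(M_i=0)$, hence $X\big(t_\sharp(t_0)\big)\in[\0,\EE_-(M_i=0))\subset\Sigma_-(M_i=0)$ by Proposition~\ref{prop:separatrice}, and therefore $\tau(M_i)\le t_\sharp(t_0)$, which is the claim (with the bound vacuous whenever the $t^Z_\sharp(t_0)$ are infinite). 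The only genuine difficulty is the bookkeeping of the two stages: verifying that each explicit function is a bona fide super-solution uses simultaneously the forward invariance $E\le K$, the trivial inequality $b\ge1$, and — the one truly dynamical ingredient — the monotone decrease of $X$, which is what allows the recruitment factor to be frozen at $\epsilon$ on $[0,t_0]$ and at the strictly smaller value $\epsilon_0$ afterwards, so that the final max-time estimate survives passing through coordinates that reach $\undest{Z}_-$ at different instants.
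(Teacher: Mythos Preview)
Your proof is correct and follows the same two-stage comparison approach that the paper indicates with its terse ``Then as before'' (referring back to Lemma~\ref{lem:firstub}). You have filled in the details the paper omits: the monotone decrease of $X$ from the super-equilibrium initial datum, the inequality $b\ge 1$ needed to make the paper's choice of $\ovest{E}_\sharp$ (with coefficient $\tfrac{\ovest{F}_\sharp}{K}$ rather than $\tfrac{b\ovest{F}_\sharp}{K}$) a genuine super-solution, and the passage from the individual coordinate times $t^Z_\sharp(t_0)$ to their maximum via the monotone decrease of the true trajectory.
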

By using Lemma \ref{lem:secondub}, we can theoretically obtain a finite upper bound for $\tau(M_i)$ (upon choosing a suitable $t_0$) as soon as $\epsilon_0 < 1/\mathcal{N}$ for $t_0$ large enough, that is if and only if
\begin{equation}
 \rho^* \big( (\rho^* + 1) \frac{\mu_M}{(1-r)\nu_E} + \mathcal{N}-1) > \mathcal{N} - 1.
 \label{cond:twosteprhostar}
\end{equation}
Condition \eqref{cond:twosteprhostar} is weaker than \eqref{cond:onesteprhostar} (and in general, much weaker). It holds if and only if
\[
 \rho^* > \frac{-(\mathcal{N}-1+\phi) + \sqrt{ (\mathcal{N}-1+\phi)^2+ 4 \phi (\mathcal{N}-1)}}{2 \phi}, \quad \phi := \lambda K = \f{\mu_M}{(1-r)\nu_E},
\]
which is true for instance if $\rho^* > \sqrt{(\mathcal{N}-1)/\phi}$.
However, we do not develop any further these analytic computations in the present paper.

\subsection{Analytic computations}
\label{sec:analytic}
 Applying Lemma \ref{lem:firstub}, in order to express analytically the solution $X_e := (E_e, M_e, F_e)$ of \eqref{sys:estimate}, we only need to diagonalize the matrix
 \[
 R_e := \begin{pmatrix}
 - (\nu_E + \mu_E) & b \\
 r \nu_E \epsilon & - \mu_F
 \end{pmatrix}.
 \]
 $R_e$ has negative trace, and positive determinant if and only if $\frac{1}{\mathcal{N}} > \epsilon$.
 Hence if $ \mathcal{N}\epsilon (M_i) < 1$ then $\0$ is globally asymptotically stable for \eqref{sys:estimate}.

 In this case its eigenvalues are real, negative and equal to $\kappa_{\pm}$ associated respectively with eigenvectors $\begin{pmatrix} 1 \\ x_{\pm} \end{pmatrix}$, where
 \begin{align*}
 \kappa_{\pm} &:= \frac{-(\nu_E + \mu_E + \mu_F) \pm \sqrt{(\nu_E + \mu_E - \mu_F)^2 + 4 b r \nu_E \epsilon}}{2}, 
 \\
 x_{\pm} &:= \frac{\nu_E + \mu_E - \mu_F \pm \sqrt{(\nu_E + \mu_E - \mu_F)^2 + 4 b r \nu_E \epsilon}}{2 b}.
 \end{align*}
 Then we deduce that for some real numbers $(r_{\pm}^0, s_{\pm}^0) \in \RR^4$,
 \begin{align*}
 E_e(t) &= r_+^0 e^{\kappa_+ t} + r_-^0 e^{\kappa_- t}, \\
 F_e(t) &= s_+^0 e^{\kappa_+ t} + s_-^0 e^{\kappa_- t}, \\
 M_e(t) &= e^{-\mu_M t} M_e^0 + (1-r) \nu_E \int_0^t e^{-\mu_M(t-t')} \big(r_+^0 e^{\kappa_+ t'} + r_-^0 e^{\kappa_- t'} \big) dt'.
 \end{align*}
 In details, we find
 \begin{align*}
  &r_+^0 = \f{x_-}{x_- - x_+} E_e^0 - \f{1}{x_- - x_+} F_e^0, &r_-^0 = \f{-x_+}{x_- - x_+}E_e^0 + \f{1}{x_- - x_+} F_e^0
  \\
  &s_+^0 = \f{x_+ x_-}{x_- - x_+} E_e^0 - \f{x_+}{x_- - x_+} F_e^0, &s_-^0 = \f{-x_+ x_-}{x_- - x_+} E_e^0 + \f{x_-}{x_- - x_+} F_e^0.
 \end{align*}

 Assuming $\kappa_+ \not= -\mu_M$ and $\kappa_- \not= -\mu_M$ (which must hold generically since these are biological parameters), we get
 \[
 M_e(t) = e^{-\mu_M t} M_e^0 + (1-r) \nu_E \big(r_+^0 \frac{e^{\kappa_+ t} - e^{-\mu_M t}}{\mu_M + \kappa_+} + r_-^0 \frac{e^{\kappa_- t} - e^{-\mu_M t}}{\mu_M + \kappa_-}\big).
 \]

Assuming $\calN > 2$, we use the overestimation \eqref{estimations:EE+} of $\EE_+$ as an initial data $(E_e^0, M_e^0, F_e^0)$, and with the notations
\[
 g(\epsilon) = \sqrt{1+\frac{4 b r \nu_E \epsilon}{(\nu_E + \mu_E - \mu_F)^2}},  \quad \sigma = \sgn(\nu_E + \mu_E - \mu_F),
\]
we deduce
\begin{align*}
 r^0_{\pm} &= \f{K}{2}\big( 1 - \f{1}{\calN} \big) \Big( 1 \pm \f{(2 \calN - 1) (\nu_E + \mu_E) + \mu_F}{g(\epsilon) \lvert \nu_E + \mu_E - \mu_F \rvert} \Big),
 \\
 s^0_{\pm} &= \f{K \lvert \nu_E + \mu_E - \mu_F \rvert}{4 b g(\eps)}\big( 1 - \f{1}{\calN} \big) \big( \sigma \pm g(\eps) \big) \big( g(\eps) \pm \f{(2 \calN - 1) (\nu_E + \mu_E) + \mu_F}{\lvert \nu_E + \mu_E - \mu_F \rvert} \big).
\end{align*}

If $r_-^0 < 0$ the we can use the simple upper bound $E_e (t) \leq r_+^0 e^{\kappa_+ t}$. This condition reads
\[
 g(\epsilon) \lvert \nu_E + \mu_E - \mu_F \rvert < (2 \mathcal{N} - 1) (\nu_E + \mu_E) + \mu_F.
\]

In this case, we know that $E_e (t) \leq \big[ \undest{\EE}_- \big]_1$ if $r_+^0 e^{\kappa_+ t} \leq \frac{\lambda K}{\mathcal{N} \beta}$, that is if
\begin{equation}
 t \geq t^E_{\min} := \frac{2}{\nu_E + \mu_E + \mu_F - g(\epsilon) \lvert \nu_E + \mu_E - \mu_F \rvert} \log \Big( \frac{(\mathcal{N} - 1)}{2\psi} \big( 1 + \frac{(2 \mathcal{N} - 1) (\nu_E + \mu_E) + \mu_F}{g(\epsilon) \lvert \nu_E + \mu_E - \mu_F \rvert} \big) \Big)
 \label{eq:tEmin}
\end{equation}

Then, under the same condition we have $s_{\pm}^0 > 0$. By using the fact that $s_+^0 + s_-^0 = \ovest{F}_+$, we deduce that $F_e (t) \leq \big[ \undest{\EE}_- \big]_3$ if $\ovest{F}_+ e^{\kappa_+ t} \leq \undest{F}_-$, that is if
\begin{equation}
 t \geq t^F_{\min} := \frac{2}{\nu_E + \mu_E + \mu_F - g(\epsilon) \lvert \nu_E + \mu_E - \mu_F \rvert} \log \Big(\f{\calN (\calN - 1)}{\psi} \Big).
 \label{eq:tFmin}
\end{equation}

In addition, we have $t^E_{\min} > t^F_{\min}$ if and only if
\[
 (2 \calN - 1) (\nu_E + \mu_E) + \mu_F > (\calN - 1) g(\eps) \lvert \nu_E +\mu_E - \mu_F \rvert.
\]

\begin{remark}
 For small $\epsilon$, the previous estimations roughly show that
 \[
  t_{\min} \geq \frac{1}{\min(\nu_E+\mu_E,\mu_F)} \log \big( \f{\calN^2}{\psi} \big).
 \]
\end{remark}

Finally, we need to compute the condition $M_e(t) \leq \frac{1}{\mathcal{N} \beta}$.
Let $\sigma_E := \mu_M / (\nu_E + \mu_E)$ and $\sigma_F := \mu_M / \mu_F$. We rewrite $M_e(t)$ as
 \[
  M_e (t) = \f{1}{\lambda} \big(1 - \f{1}{\calN} \big) \big( \alpha e^{-\mu_M t} + \alpha_+ e^{\kappa_+ t} + \alpha_- e^{\kappa_- t} \big),
 \]
 with
 \[
  \alpha = \f{(\calN - 1) \sigma_F + 1 - \eps \calN}{(\sigma_F - 1) (\sigma_E - 1) - \eps \calN}, \quad \alpha_{\pm} = \f{\mu_M}{\mu_M + \kappa_{\pm}}\widetilde{r}_{\pm}^0 ,
 \]
 where
 \[
  \widetilde{r}_{\pm}^0 := \f{1}{2} \big( 1 \pm \f{2 \calN - 1 + \sigma_E / \sigma_F}{g(\epsilon) \sigma (1 - \sigma_E/\sigma_F)} \big), \quad
  g(\eps) = \sqrt{1 + \f{4 \calN \sigma_E \sigma_F \eps}{(\sigma_F -  \sigma_E)^2}}
 \]
  and
 \[
  \f{\mu_M}{\mu_M + \kappa_{\pm}} = \f{2 \sigma_E \sigma_F}{2 \sigma_E \sigma_F - (\sigma_E + \sigma_F) \pm \sigma (\sigma_F - \sigma_E) g(\eps)}.
 \]
 The condition we need to compute is therefore
 \[
  \alpha e^{-\mu_M t} + \alpha_+ e^{\kappa_+ t} + \alpha_- e^{\kappa_- t} \leq \f{\psi}{\calN - 1}.
 \]
We assume that the male half-life is shorter than that of the females and of the eggs, so that $\sigma_F, \sigma_E > 1$. Under the stronger assumptions that $r_- < 0 < r_+$ and
\[
\eps \calN < 1, \quad (\sigma_F - 1)(\sigma_E - 1) > \eps \calN,
\]
we obtain that $\alpha > 0$.
We simply treat two subcases: first if $\mu_M + \kappa_+ < 0$ (small $\mu_M$) then we obtain $\alpha_+ < 0 < \alpha_-$ and thus
\[
 t^M_{\min} := \f{1}{\mu_M} \log \Big( (\calN - 1) \f{\alpha + \alpha_-}{\psi} \Big).
\]
Second, if $\mu_M + \kappa_- > 0$ (large $\mu_M$) then we obtain $\alpha_- < 0 < \alpha_+$ and thus
\[
 t^M_{\min} := \f{1}{-\kappa_+} \log \Big( (\calN - 1) \f{\alpha + \alpha_+}{\psi} \Big).
\]

In the last case (when $\mu_M$ is large), we can check that $t^M_{\min} > t^E_{\min}$ is equivalent to
\[
 \alpha + \alpha_+ > \widetilde{r}_+^0,
\]
which holds since $\alpha > 0$ and $\alpha_+ > \widetilde{r}_+^0$.

In this case we obtain
\begin{multline*}
 \max \big( t^E_{\min}, t^F_{\min}, t^M_{\min} \big) = t^M_{\min} 
 \\
 = \frac{2 \sigma_E}{\mu_F \big(\sigma_F + \sigma_E - g(\epsilon) \sigma (\sigma_F - \sigma_E) \big)} \log \Big(\f{\calN-1}{\psi} \big(\f{(\calN - 1) \sigma_F + 1 - \eps \calN}{(\sigma_F - 1) (\sigma_E - 1) - \eps \calN} 
 \\
 + \f{\sigma_E \sigma_F \big( g(\eps) \sigma (\sigma_F - \sigma_E) + (2 \calN - 1)\sigma_F + \sigma_E  \big)}{\big( 2 \sigma_E \sigma_F - (\sigma_E + \sigma_F) + \sigma (\sigma_F - \sigma_E) g(\eps)\big) g(\eps) \sigma (\sigma_F - \sigma_E)} \big) \Big).
\end{multline*}

\bibliographystyle{siam}
\bibliography{biblio}
\end{document}